\newcommand{\hookuparrow}{\mathrel{\rotatebox[origin=c]{90}{$\hookrightarrow$}}}
\newtheorem*{teo*}{Theorem}
\newtheorem*{conj*}{Conjecture}
\newtheorem{lemma}{Lemma}[section]
\newtheorem{teo}[lemma]{Theorem}
\newtheorem{prop}[lemma]{Proposition}
\newtheorem{cor}[lemma]{Corollary}
\theoremstyle{definition}
\newtheorem{defn}[lemma]{Definition}
\newtheorem{nota}[lemma]{Notation}
\newtheorem{remark}[lemma]{Remark}
\newtheorem{remarks}[lemma]{Remarks}
\newtheorem{notation}[lemma]{Notation}
\begin{document}
\title{Mumford curves covering $p$-adic Shimura curves and their fundamental domains}
\author{Laia Amor\'{o}s, Piermarco Milione}

\newcommand{\Addresses}{{
  \bigskip
  \footnotesize

  L.~Amorós, \textsc{Facult\'{e} des Sciences, de la Technologie et de la Communication. 6 rue Richard Coudenhove-Kalergi, L-1359 Luxembourg}\par\nopagebreak
  \textit{E-mail address} \texttt{ laia.amoros@uni.lu}

  \medskip

  P.~Milione , \textsc{School of Science. Department of Mathematics and Systems Analysis. Aalto University. FI-00076 Aalto. Finland}\par\nopagebreak
  \textit{E-mail address} \texttt{piermarco.milione@aalto.fi}

}}

\date{}


\maketitle

\abstract{
\small
We give an explicit description of fundamental domains associated with the $p$-adic uniformisation of families of Shimura curves of discriminant $Dp$ and level $N\geq 1$, for which the one-sided ideal class number $h(D,N)$ is $1$. The results obtained generalise those in \cite[Ch. IX]{Gerritzen_vanderPut1980} for Shimura curves of discriminant $2p$ and level $N=1$. The method we present here enables us to find Mumford curves covering Shimura curves, together with a free system of generators for the associated Schottky groups, $p$-adic good fundamental domains, and their stable reduction-graphs. The method is based on a detailed study of the modular arithmetic of an Eichler order of level $N$ inside the definite quaternion algebra of discriminant $D$, for which we generalise the classical results of Hurwitz \cite{Hurwitz1896}. As an application, we prove general formulas for the reduction-graphs with lengths at $p$ of the families of Shimura curves considered.
}

\smallskip
\small
\textbf{Mathematics Subject Classification (2010)} 11G18, 11R52, 14G35, 14G22.
\smallskip

\small
\textbf{Keywords} Shimura curves, Mumford curves, $p$-adic fundamental domains. 

\bibstyle{plain}

\section*{Introduction}
Shimura curves are a wide and powerful generalisation of modular curves, as shown by many applications that have solved important problems in number theory in the last few decades. The complex uniformisation of Shimura curves, in comparison to that of modular curves, is more difficult to approach computationally (think about the lack of cusps, which complicates the computation of Fourier expansions of their uniformising functions). Nevertheless, several studies make amenable computations for this complex uniformisation such as \cite{AlsinaBayer2004}, \cite{Voight2006}, \cite{BayerTravesa2007} and \cite{VoightWillis2011}. 
In addition to the complex uniformisation, Shimura curves also admit non-archimedean uniformisations. As expressed by the fundamental theorems of \v{C}erednik (\cite{Cerednik1976B}) and Drinfel'd (\cite{Drinfeld1976}). The non-archimedean uniformisation describes a $p$-adic integral model of Shimura curves associated with indefinite quaternion algebras of discriminant $Dp$ and its bad special fibre, thanks to the unified language of rigid analytic geometry.

Several studies take a computational approach to the study of $p$-adic uniformisations of Shimura curves and their bad reduction fibres. In particular in \cite{FrancMasdeu2014} the problem of computing the reduction-graph with lengths of a Shimura curve associated with an arbitrary Eichler order is successfully solved by giving an algorithm implemented in Sage. Other studies taking an effective study of the $p$-adic uniformisation of Shimura curves through the computation of their special points are, for example, \cite{DarmonPollack}, \cite{Greenberg2006} and \cite{Greenberg2009}.

In this paper we present some results on these $p$-adic uniformisations which are of a certain generality, since they hold for some infinite families of Shimura curves, and explicit, because they are approachable from a computational point of view. To be more specific, one of the advantages we point out in the $p$-adic approach to uniformisation of Shimura curves of discriminant $Dp$ is that the ``trick'' of \emph{interchanging the local invariants} $p$ and $\infty$ of the indefinite quaternion algebra leads to the study of the definite quaternion algebra $H$ of discriminant $D$, which does not depend on $p$ (at least at first glance). This gives rise to infinitely many families of Shimura curves $X(Dp,N)$, indexed by $p$, and such that each family depends on the arithmetic behaviour of an Eichler order of level $N$ inside $H$. 

We bring to light two classical studies: one of Hurwitz \cite{Hurwitz1896} and one of Gerritzen and van der Put \cite{Gerritzen_vanderPut1980}. In \cite[Ch. IX]{Gerritzen_vanderPut1980} they study Mumford curves covering Shimura curves of discriminant $2p$ and level $1$. In \cite{Hurwitz1896}, the author studies the arithmetic of a maximal order inside the definite quaternion algebra of discriminant $2$. In particular, he proves a unicity result in this order which we are able to generalise in our situation. The combination and generalisation of these two studies led us to study the $p$-adic uniformisation of families of Shimura curves $X(Dp,N)$ such that the one-sided ideal class number $h(D,N)$ is $1$, through the study of certain Mumford curves covering them. The fundamental results of this paper (Theorem \ref{Theorem_Schottky_group} and Corollary \ref{Corollary}) can be summarised in the following statement.

\begin{teo*}
Let $X(Dp,N)$ be the Shimura curve associated with an Eichler order of level $N$ with $(N,Dp) = 1$, inside the indefinite quaternion algebra of discriminant $Dp$, and let $\mathcal{O}$ be an Eichler order of level $N$ inside the definite quaternion algebra of discriminant $D$. Assume that 
\begin{enumerate}[$(i)$]
\item
$h(D,N)=1$.
\item
There exists $\xi\in\mathcal{O}$ such that $2\in\xi\mathcal{O}$ and
$$\varphi:\mathcal{O}^{\times}/\mathbb{Z}^{\times}\rightarrow (\mathcal{O}/\xi\mathcal{O})_r^{\times},\quad\varphi(u)=\lambda_r(u+\mathrm{Nm}(\xi)\mathcal{O})$$
is a bijection, where $(\mathcal{O}/\xi\mathcal{O})_r^{\times}$ denotes the image of $(\mathcal{O}/\mathrm{Nm}(\xi)\mathcal{O})^{\times}$ under the natural projection $\lambda_r:\mathcal{O}/\mathrm{Nm}(\xi)\mathcal{O}\twoheadrightarrow\mathcal{O}/\xi\mathcal{O}$. 
\item
The prime $p$ satisfies
$
t(p):=\#\{\alpha\in\mathcal{O}\mid\,\mathrm{Nm}(\alpha)=p,\,\alpha-1\in\xi\mathcal{O},\,\mathrm{Tr}(\alpha)=0\}=0.
$

\end{enumerate}
Then there exists a Mumford curve $C$ covering the Shimura curve $X(Dp,N)\otimes_{\mathbb{Q}}\mathbb{Q}_{p}$ such that:
\begin{enumerate}[$(a)$]
\item
the curve $C$ has genus $(p+1)/2$,
\item
the degree of the cover is $\#\mathcal{O}^{\times}/\mathbb{Z}^{\times}$,
 \item
a finite set of generators for the Schottky group uniformising the Mumford curve $C$ is given by the image, inside $\mathrm{PGL}_{2}(\mathbb{Q}_{p})$, of the set of matrices 
\[
\widetilde{S}:=\Phi_{p}(\{\alpha\in\mathcal{O}\mid\,\mathrm{Nm}(\alpha)=p,\,\alpha-1\in\xi\mathcal{O}\})\subseteq\mathrm{GL}_{2}(\mathbb{Q}_{p}),
\]
where $\Phi_{p}$ denotes a matrix immersion of the definite quaternion algebra inside $\mathrm{M}_{2}(\mathbb{Q}_{p})$.
\end{enumerate}
\end{teo*}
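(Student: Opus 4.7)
The plan is to apply the \v Cerednik--Drinfel'd theorem to identify the rigid-analytic space $X(Dp,N)^{\mathrm{rig}} \otimes \mathbb{Q}_p^{\mathrm{nr}}$ with a quotient $\Gamma \backslash \mathcal{H}_p$ of Drinfel'd's $p$-adic upper half-plane by a cocompact discrete subgroup $\Gamma \subset \mathrm{PGL}_2(\mathbb{Q}_p)$ arising from $H$, and then to exhibit a normal Schottky subgroup $\Gamma' \trianglelefteq \Gamma$ whose quotient $C := \Gamma' \backslash \Omega$ realises the desired Mumford covering. Hypothesis $(i)$, $h(D,N)=1$, is what allows one to describe $\Gamma$ very concretely: after passing to the localisation $\mathcal{O}[1/p]$ and using strong approximation, $\Gamma$ is the image under $\Phi_p$ of the norm-$p^{\mathbb{Z}}$ units of $\mathcal{O}[1/p]$ modulo the centre.

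First, I would define $\Gamma' \subset \Gamma$ as the image under $\Phi_p$ of the elements $\alpha \in \mathcal{O}[1/p]$, of norm a power of $p$, that are \emph{primary} in the sense $\alpha - 1 \in \xi\mathcal{O}$. The content of hypothesis $(ii)$ is that the map $\varphi$ is a bijection; combined with $h(D,N)=1$ and the coprimality $(N,Dp)=1$, this yields a Hurwitz-style unique factorisation: every element $\alpha\in\mathcal{O}$ with $\mathrm{Nm}(\alpha)$ a power of $p$ admits, modulo the two-sided action of $\mathcal{O}^\times$, a unique primary representative, and moreover factors uniquely as a product of primary elements of norm $p$. At the group-theoretic level this gives an internal semidirect decomposition $\Gamma \cong \Gamma' \rtimes (\mathcal{O}^\times/\mathbb{Z}^\times)$, which immediately yields assertion $(b)$: the cover $C \to X(Dp,N)\otimes\mathbb{Q}_p$ has degree $\#\mathcal{O}^\times/\mathbb{Z}^\times$.

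Next, I would enumerate the generators: by hypothesis $(i)$ the $p+1$ left $\mathcal{O}$-ideals of norm $p$ are each principal, and the primary condition picks a unique generator of each, giving exactly $p+1$ elements of $\widetilde S$. These pair up as inverse pairs in $\mathrm{PGL}_2(\mathbb{Q}_p)$ (since for $\alpha$ primary of norm $p$ the conjugate $\bar\alpha$ has a unique primary associate representing $\alpha^{-1}$ in $\mathrm{PGL}_2$), so the image in $\mathrm{PGL}_2(\mathbb{Q}_p)$ contributes $(p+1)/2$ inverse pairs. Freeness of the group they generate follows from the uniqueness of the primary factorisation applied to any reduced word: such a word represents a primary element of norm $p^k$, hence is nontrivial in $\Gamma$. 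Torsion-freeness of $\Gamma'$ is the remaining delicate point. Higher-order torsion is ruled out because a torsion element of $\Gamma'$ would have to be integral of norm $1$ over a number field and primary, forcing it to be central; order-$2$ torsion, however, corresponds precisely to primary quaternions with $\mathrm{Nm}(\alpha)=p$ and $\mathrm{Tr}(\alpha)=0$, which is exactly the set counted by $t(p)$. Hypothesis $(iii)$ therefore guarantees $\Gamma'$ is torsion-free and hence a bona fide Schottky group, establishing $(c)$.

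Finally, assertion $(a)$ is automatic from the theory of Mumford curves: the genus of $C = \Gamma' \backslash \Omega$ equals the rank of $\Gamma'$ as a free group, namely $(p+1)/2$. The main obstacle is the Hurwitz-type unique factorisation underlying the definition of $\Gamma'$: generalising Hurwitz's theorem from the maximal order in the discriminant-$2$ algebra to an arbitrary Eichler order of level $N$ inside a definite quaternion algebra of discriminant $D$ is the technical heart of the argument, and the rather baroque-looking hypothesis $(ii)$ (the bijectivity of $\varphi$ through the projection $\lambda_r$) is tailor-made so that primary associates exist and are unique; once this is in hand, the rest of the argument is a clean assembly of \v Cerednik--Drinfel'd, Mumford's theory of Schottky uniformisation, and the counting of norm-$p$ elements.
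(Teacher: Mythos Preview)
Your approach is essentially the paper's: \v{C}erednik--Drinfel'd for the uniformisation, a Hurwitz-type \emph{Zerlegungssatz} in $\mathcal{O}$ (enabled by hypotheses (i) and (ii)) yielding the semidirect decomposition $\Gamma_p \cong \Gamma_p(\xi) \rtimes (\mathcal{O}^\times/\mathbb{Z}^\times)$ and the identification $\Gamma_p(\xi) = \langle S \rangle$, a count of primary norm-$p$ elements to get the rank, and Mumford's theory for the genus. Your ideal-counting argument for the rank is actually more transparent than the paper's (which instead sets up a $(p^2-1)$-to-$1$ map from the nonzero norm-$0$ classes in $\mathcal{O}/p\mathcal{O}$ onto the primary elements); note only that since $2 \in \xi\mathcal{O}$, both $\pm\alpha$ are primary, so $\widetilde{S}$ has $2(p+1)$ elements rather than $p+1$, though this washes out in $\mathrm{PGL}_2$.

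One genuine correction: the claim that ``a torsion element of $\Gamma'$ would have to be integral of norm $1$'' is wrong --- a pure quaternion $\beta$ of norm $p$ gives $[\beta]^2 = 1$ in $\mathrm{PGL}_2$ with $\mathrm{Nm}(\beta) = p$, not $1$. You do not need a separate torsion analysis at all: the unique-factorisation argument you already invoke for freeness, done carefully, shows directly that the \emph{only} relations in $\langle S \rangle$ are $[\beta_i]^2 = 1$ for the pure $\beta_i$. Indeed, any relation $\alpha_1\cdots\alpha_{2k} = \pm p^k$ forces, by uniqueness of the decomposition, some adjacent pair $\alpha_i\alpha_{i+1} = p$, i.e.\ $\alpha_{i+1} = \overline{\alpha}_i$; in a reduced word this can only happen when $[\alpha_i]=[\overline{\alpha}_i]$, i.e.\ when $\alpha_i$ is pure. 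Hypothesis (iii) then kills these relations in one stroke, and freeness --- hence torsion-freeness --- follows. This is exactly how the paper argues.
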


	In Table \ref{Table_xi} we present an element satisfying condition ($ii$) for all definite quaternion algebras with $h(D,N)=1$, except for the cases $(D,N)=(2,5)$ and $(7,1)$, for which no such element exists. In section \ref{null-trace_condition} we show that there are infinitely many primes satisfying ($ii$).

Interestingly point ($a$) of the theorem is a consequence of a result that we prove concerning the number of representations of the prime $p$ by a quaternary quadratic form with some congruence conditions on the coefficients. 

\begin{teo*}
Let $\xi\in\mathcal{O}$ be a quaternion that satisfies condition $(ii)$ of the previous Theorem. Then the finite set
$
\{\alpha\in\mathcal{O}\mid\,\mathrm{Nm}(\alpha)=p,\,\alpha-1\in\xi\mathcal{O}\}
$
has cardinality $2(p+1)$.
\end{teo*}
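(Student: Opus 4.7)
The plan is to count $|T_p|:=\#\{\alpha\in\mathcal{O}\mid\mathrm{Nm}(\alpha)=p\}$ globally, then use the congruence condition to cut out a fiber of a reduction map. First I would use that $(N,Dp)=1$ and $p\nmid D$ to conclude that $p$ is split in $H$, so that the Eichler order $\mathcal{O}$ has exactly $p+1$ right ideals of norm $p$. Since $h(D,N)=1$, each such ideal is principal, say $I=\alpha\mathcal{O}$, and two generators of $I$ differ by right multiplication by a unit; hence each right ideal of norm $p$ contributes $|\mathcal{O}^\times|$ elements to $T_p$, giving $|T_p|=(p+1)|\mathcal{O}^\times|$.

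Next, since $2\in\xi\mathcal{O}$ forces $\mathrm{Nm}(\xi)$ to be a power of $2$, every $\alpha\in T_p$ has $\gcd(\mathrm{Nm}(\alpha),\mathrm{Nm}(\xi))=1$, so reduction defines a well-posed map $\pi:T_p\to(\mathcal{O}/\xi\mathcal{O})_r^\times$. The key step is to show that $\pi$ has fibers of constant size. Observe that $\mathcal{O}^\times$ acts on $T_p$ by left multiplication (preserving $T_p$ because units in a definite order have norm $1$), and on $(\mathcal{O}/\xi\mathcal{O})_r^\times$ by left multiplication (preserving the target by the same norm argument), and $\pi$ is equivariant for these actions. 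Condition $(ii)$, stating that $\varphi:\mathcal{O}^\times/\mathbb{Z}^\times\to(\mathcal{O}/\xi\mathcal{O})_r^\times$ is a bijection, implies that every class in $(\mathcal{O}/\xi\mathcal{O})_r^\times$ is of the form $\bar u$ for a unit $u$; equivalently, the orbit of $\bar 1$ under left multiplication by $\mathcal{O}^\times$ is the whole target, so this action is transitive. By equivariance, the fibers $\pi^{-1}(\bar\beta)$ and $\pi^{-1}(\overline{u\beta})$ are in bijection for every $u\in\mathcal{O}^\times$, so all fibers have the same cardinality; since $T_p\neq\emptyset$, none of them is empty, and $\pi$ is surjective.

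Summing fiber sizes then gives the common fiber size
\[
\frac{|T_p|}{|(\mathcal{O}/\xi\mathcal{O})_r^\times|}=\frac{(p+1)|\mathcal{O}^\times|}{|\mathcal{O}^\times/\mathbb{Z}^\times|}=2(p+1),
\]
using $\mathbb{Z}^\times=\{\pm 1\}\subseteq\mathcal{O}^\times$ and the bijectivity of $\varphi$. The fiber over $\bar 1$ is precisely $\{\alpha\in\mathcal{O}\mid\mathrm{Nm}(\alpha)=p,\ \alpha-1\in\xi\mathcal{O}\}$, yielding the claimed cardinality $2(p+1)$.

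The main obstacle I expect is Step 1: pinning down the count $|T_p|=(p+1)|\mathcal{O}^\times|$ requires invoking the standard fact on the number of right ideals of norm $p$ in an Eichler order of level $N$ in a definite quaternion algebra of discriminant $D$, and carefully using $h(D,N)=1$ to pass from ideals to quaternions. Once this is in place, the rest of the argument is a clean equivariance/orbit-counting step in which condition $(ii)$ does the essential work by identifying $(\mathcal{O}/\xi\mathcal{O})_r^\times$ with a single transitive $\mathcal{O}^\times$-orbit.
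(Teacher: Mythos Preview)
Your argument is correct, and it takes a genuinely different route from the paper's. The paper proves the more general Theorem~\ref{primary_representations} by working modulo $p$: it counts the set $A_p=\{[\alpha]\in\mathcal{O}/p\mathcal{O}\mid \alpha\notin p\mathcal{O},\ \mathrm{Nm}(\alpha)\equiv 0\bmod p\}$ using the isomorphism $\mathcal{O}/p\mathcal{O}\simeq\mathrm{M}_2(\mathbb{F}_p)$ (obtaining $|A_p|=(p^2-1)(p+1)$), and then builds a $(p^2-1)$-to-$1$ map $A_p\to\{\pi\ \xi\text{-primary}:\mathrm{Nm}(\pi)=p\}/\{\pm 1\}$ by sending $[\alpha]$ to the unique (up to sign) $\xi$-primary generator of $\alpha\mathcal{O}+p\mathcal{O}$. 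Your proof instead works globally: you first count all of $T_p$ via the $p+1$ principal right ideals of norm $p$, and then use the left $\mathcal{O}^\times$-equivariance of the reduction map $T_p\to(\mathcal{O}/\xi\mathcal{O})_r^\times$ together with the transitivity furnished by condition~$(ii)$ to conclude that all fibers have size $|T_p|/|(\mathcal{O}/\xi\mathcal{O})_r^\times|=2(p+1)$.

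Both proofs ultimately rest on the same two ingredients --- the $p+1$ right ideals of norm $p$ and the normalization of generators coming from the $\xi$-primary condition --- but they package them differently. The paper's approach handles the general case of an arbitrary $\xi$-primary class set $\mathcal{P}$ (including the case $2\notin\xi\mathcal{O}$) in one stroke. Your approach is cleaner in the specific setting of the stated theorem: it isolates precisely where condition~$(ii)$ enters (as transitivity of the $\mathcal{O}^\times$-action on the target), and avoids the auxiliary fiber computation in $\mathcal{O}/p\mathcal{O}$. The point you flag as the ``main obstacle'' --- that there are exactly $p+1$ right ideals of norm $p$ and each is principal with $|\mathcal{O}^\times|$ generators --- is standard once $p\nmid DN$ and $h(D,N)=1$ are in hand, so the proposal goes through without difficulty.
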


	This article is organised as follows. 
	In section \ref{Sec1} we introduce the $p$-adic upper half-plane and the Bruhat-Tits tree together with the reduction map identifying it with the reduction of this rigid analytic variety. Moreover we recall the theory of Mumford curves and we relate it to that of the $p$-adic uniformisation of Shimura curves. 
	In section \ref{Sec2} we introduce and solve the problem of unique factorisation in definite quaternion orders with one-sided ideal class number equal to $1$. 
	In section \ref{Sec3} we apply the arithmetic results obtained in Section \ref{Sec2} in order to prove the main results of the paper. We also describe good fundamental domains and stable reduction-graphs for the Mumford curves covering our Shimura curves. 
	In section \ref{Sec4} we show how our detailed knowledge of the $p$-adic Shimura curve as a rigid analytic variety allows us to easily obtain formulas describing the reduction-graphs with lengths of these Shimura curves as well as genus formulas for them and for certain Atkin-Lehner quotients.
	Finally in section \ref{Sec5} we show, in a specific example, how one can use the main results in order to compute free systems of generators for the Schottky group predicted by the theorem above, a $p$-adic good fundamental domain on $\mathcal{H}_p$ for the action of this Schottky group and its stable reduction-graph, as well as the reduction-graph with lengths of the Shimura curve considered. This is done thanks to an algorithm developed in the computer algebra system Magma \cite{Magma}.

\subsection*{Acknowledgements} 
The authors wish to thank Pilar Bayer and Gabor Wiese for their valuable guidance and support given before and during the preparation of this paper, and for their careful reading of this manuscript. The authors also aknowledge several helpful and enlightening conversations with Marc Masdeu and Xavier Xarles. 

\subsection*{Notation}
Throughout the paper, $p$ will denote a fixed odd prime integer and $\overline{\mathbb{Q}}_{p}$ a fixed algebraic closure of the field of $p$-adic numbers $\mathbb{Q}_{p}$. Once this is fixed, $\mathbb{Q}_{p^{2}}\subseteq\overline{\mathbb{Q}}_{p}$ will denote the unramified quadratic extension of $\mathbb{Q}_{p}$ and $\mathbb{Z}_{p^{2}}$ its ring of integers, $\mathbb{Q}_{p}^{nr}\subseteq\overline{\mathbb{Q}}_{p}$ the maximal unramified extension of $\mathbb{Q}_{p}$ and $\mathbb{Z}_{p}^{nr}$ its ring of integers.

We take the usual convention for the $p$-adic absolute value to be defined as $|z|:=1/p^{v_{p}(z)}$ for every $z\in\mathbb{Q}_{p}$, where $v_{p}$ denotes the $p$-adic valuation on $\mathbb{Q}_{p}$. Finally $\mathbb{C}_{p}$ will denote the completion of $\overline{\mathbb{Q}}_{p}$ with respect to the unique absolute value extending the $p$-adic absolute value.

\section{$p$-adic uniformisation of Shimura curves}\label{Sec1}

\subsection{The $p$-adic upper half-plane}
Let $\mathbb{P}^{1,rig}:=\mathbb{P}^{1,rig}_{\mathbb{Q}_{p}}$ denote the rigid analytic projective line over $\mathbb{Q}_{p}$. This is the rigidification of the algebraic projective line $\mathbb{P}^{1}_{\mathbb{Q}_{p}}$ (cf. \cite[9.3.4]{Bosch1984}), so that its set of $L$-points is $\mathbb{P}^{1,rig}(L)=\mathbb{P}^{1}(L)$, for every extension $\mathbb{Q}_{p}\subseteq L\subseteq\mathbb{C}_{p}$.
For $a=[a_{0}:a_{1}]\in\mathbb{P}^{1}(\mathbb{Q}_{p})$ and $r\in\mathbb{R}_{\geq0}$, we denote by \\
$
\mathbb{B}^{+}(a,|p|^r) :=\{[z_{0}:z_{1}]\in\mathbb{P}^{1}(\mathbb{C}_{p}):\; |z_{0}a_{1}-z_{1}a_{0}| \leq | p |^{r}\}=
\{z\in\mathbb{P}^{1}(\mathbb{C}_{p}):\;v(z_{0}a_{1}-z_{1}a_{0})\geq r\},
$
and
$
\mathbb{B}^{-}(a,|p|^r):=\{[z_{0}:z_{1}]\in\mathbb{P}^{1}(\mathbb{C}_{p}):\; |z_{0}a_{1}-z_{1}a_{0}|<| p |^{r}\}=
\{z\in\mathbb{P}^{1}(\mathbb{C}_{p}):\;v(z_{0}a_{1}-z_{1}a_{0})> r\},
$
the closed ball and open ball of $\mathbb{P}^{1,rig}$ with centre $a$ and radius $| p |^{r}$. 
It can be proved that a complement of open balls in $\mathbb{P}^{1}(\mathbb{C}_{p})$ is an admissible open subset of $\mathbb{P}^{1,rig}$, and that every admissible open subset of $\mathbb{P}^{1,rig}$ is obtained as a union of such open subsets (\cite[9.7.2/2]{Bosch1984}).

\begin{defn}
Consider a functor from the category of field extensions $\mathbb{Q}_{p}\subseteq L\subseteq\mathbb{C}_{p}$ to the category of sets defined as follows: for each extension $L|\mathbb{Q}_{p}$ consider the subset of points
\[
\mathcal{H}_{p}(L):=\mathbb{P}^{1}(L)\smallsetminus\mathbb{P}^{1}(\mathbb{Q}_{p})\subset \mathbb{P}^{1,rig}(L).
\]
$\mathcal{H}_{p}(L)$ is actually the set of $L$-points of a rigid analytic variety $\mathcal{H}_{p}$ over $\mathbb{Q}_{p}$ called the \emph{$p$-adic upper half-plane} over $\mathbb{Q}_{p}$. One way to see this is by defining an open admissible cover of $\mathcal{H}_{p}(L)$. 
\end{defn}

\begin{defn}\label{abiertos_del_semiplano}
For every integer $i\geq 0$ we define the following subsets of points of $\mathbb{P}^{1,rig}(L)$:
\[
\mathcal{H}_{p}^{(i)}(L):=\mathbb{P}^{1}(L)\smallsetminus\bigcup_{a\in\mathbb{P}^{1}(\mathbb{Q}_{p})}\mathbb{B}^{-}(a,|p|^{i}).
\]
\end{defn}

For every point $a=[a_{0}:a_{1}]\in\mathbb{P}^{1}(\mathbb{Q}_{p})$, one can choose \emph{unimodular coordinates}, i.e. $(a_{0},a_{1})\in\mathbb{Z}_{p}^{2}$ such that at least one coordinate is a unit, so it makes sense to consider $a$ mod $p^i$.
The next lemma allows us to simplify the description of the subsets in Definition \ref{abiertos_del_semiplano}.

\begin{lemma}\label{cns_bolas_disjuntas}
Let $i>0$ be an integer and take $a,a'\in\mathbb{P}^{1}(\mathbb{Q}_{p})$. Then 
\begin{enumerate}[$(a)$]
\item
$\mathbb{B}^{+}(a,|p|^{i})\cap\mathbb{B}^{+}(a',|p|^{i})\neq\emptyset\Leftrightarrow a\equiv a'\,(\mathrm{mod}\,p^{i})$.
\item
$\mathbb{B}^{-}(a,|p|^{i-1})\cap\mathbb{B}^{-}(a',|p|^{i-1})\neq\emptyset\Leftrightarrow a\equiv a'\,(\mathrm{mod}\,p^{i})$. \qed
\end{enumerate}
\end{lemma}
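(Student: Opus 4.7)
The plan is to reduce both parts to a single computation involving the determinant $\delta := a_0 a_1' - a_1 a_0'$ formed from unimodular representatives $(a_0,a_1)\in\mathbb{Z}_p^2$ of $a$ and $(a_0',a_1')\in\mathbb{Z}_p^2$ of $a'$. My first step is to establish the dictionary: $a\equiv a'\pmod{p^i}$ in $\mathbb{P}^1(\mathbb{Q}_p)$ if and only if $v_p(\delta)\geq i$. Indeed, since at least one of $a_0,a_1$ is a $p$-adic unit (say $a_1$), the congruence $\delta\equiv 0\pmod{p^i}$ lets one rescale $a'$ to the unimodular representative $(a_0 a_1/a_1\,,\,a_1)$ (one checks $a_1'$ must then also be a unit) so that coordinate-wise $a_j\equiv a_j'\pmod{p^i}$; the converse is immediate.

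Next, for the forward direction in part $(a)$, I would use the algebraic identities
\begin{align*}
z_0\,\delta &= a_0\,(z_0 a_1'-z_1 a_0')\;-\;a_0'\,(z_0 a_1-z_1 a_0),\\
z_1\,\delta &= a_1\,(z_0 a_1'-z_1 a_0')\;-\;a_1'\,(z_0 a_1-z_1 a_0),
\end{align*}
obtained by expanding the right-hand sides. Picking $z=[z_0:z_1]$ in the intersection with unimodular representatives in $\mathbb{C}_p^2$ (so $\min(v(z_0),v(z_1))=0$), the two linear forms on the right have absolute value $\leq |p|^i$ by hypothesis, and $a_0,a_0',a_1,a_1'\in\mathbb{Z}_p$. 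Taking the $j$ for which $z_j$ is a unit, the ultrametric inequality forces $|\delta|\leq |p|^i$, i.e.\ $v_p(\delta)\geq i$, which by the dictionary gives the congruence. Conversely, choosing unimodular representatives that agree mod $p^i$ shows $a$ itself lies in both balls: trivially in $\mathbb{B}^+(a,|p|^i)$, and in $\mathbb{B}^+(a',|p|^i)$ because $|a_0 a_1'-a_1 a_0'|=|\delta|\leq |p|^i$.

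For part $(b)$ the argument is the same, with the non-strict bound $|\cdot|\leq |p|^i$ replaced throughout by the strict bound $|\cdot|<|p|^{i-1}$, i.e.\ $v(\cdot)>i-1$. The forward direction yields $v(\delta)>i-1$, and here the mild subtlety is that although $v$ extends to rational values on $\mathbb{C}_p$, the element $\delta$ lies in $\mathbb{Z}_p$ and hence has \emph{integer} valuation, so $v(\delta)>i-1$ upgrades automatically to $v_p(\delta)\geq i$. The reverse direction proceeds exactly as in $(a)$: choose representatives agreeing mod $p^i$, so $v(\delta)\geq i>i-1$ and $a$ lies in both open balls.

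The only mildly delicate point is the last one—the interplay between the $\mathbb{C}_p$-valued (rational) and the $\mathbb{Z}_p$-valued (integer) valuations when passing from the open-ball inequality to the congruence. Everything else is the bookkeeping around unimodular representatives and the two-line identity for $z_j\,\delta$.
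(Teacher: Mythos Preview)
Your argument is correct. Note that the paper provides no proof of this lemma: the \texttt{\textbackslash qed} appears immediately after the statement, so the authors treat it as a routine verification left to the reader. Your proposal therefore supplies exactly the omitted details, and the approach via the determinant $\delta = a_0 a_1' - a_1 a_0'$ and the Cramer-type identities for $z_j\delta$ is the natural one.

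One tiny slip worth fixing: in the dictionary paragraph you wrote the rescaled representative of $a'$ as $(a_0 a_1/a_1,\,a_1)$, which simplifies to $(a_0,a_1)$ and is not what you meant. The intended rescaling is by the unit $a_1(a_1')^{-1}$, sending $(a_0',a_1')$ to $(a_0' a_1(a_1')^{-1},\,a_1)$; then $a_0 a_1' \equiv a_1 a_0' \pmod{p^i}$ gives $a_0' a_1(a_1')^{-1}\equiv a_0\pmod{p^i}$ as required. Your parenthetical ``one checks $a_1'$ must then also be a unit'' shows you have the right idea; just correct the displayed expression.
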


	Denote by $\mathcal{P}_{i}$ a system of representatives for the points of $\mathbb{P}^{1}(\mathbb{Q}_{p})=\mathbb{P}^{1}(\mathbb{Z}_{p})$ mod $p^{i}$, i.e. $\mathcal{P}_{i}$ is a system of representatives for the points $\mathbb{P}^{1}\left(\mathbb{Z}_{p}/p^{i}\mathbb{Z}_{p}\right)$ of the projective line. A point $a\in\mathcal{P}_{i}$ has unimodular coordinates $(a_{0},a_{1})$ such that their reductions are $(\tilde{a}_{0},\tilde{a}_{1})\in\mathbb{Z}_{p}/p^{i}\mathbb{Z}_{p}\times\mathbb{Z}_{p}/p^{i}\mathbb{Z}_{p}$ not both $\equiv 0\,(\mathrm{mod}\,p)$. Finally there is a bijection
$
\mathcal{P}_{i}\simeq \mathbb{Z}_{p}/p^{i}\mathbb{Z}_{p}\sqcup p\mathbb{Z}_{p}/p^{i}\mathbb{Z}_{p}.
$ 
In particular $\mathcal{P}_{1}\simeq \mathbb{P}^{1}(\mathbb{F}_{p})= \mathbb{F}_{p}\cup\{\infty\}$. Thus, by that basic property of non-archimedean balls to be either disjoint or one contained inside the other, and after Lemma \ref{cns_bolas_disjuntas}, for every $i>0$,
\[
\mathcal{H}_{p}^{(i)}(L)=\mathbb{P}^{1}(L)\smallsetminus\bigcup_{a\in\mathcal{P}_{i}}\mathbb{B}^{-}(a,|p|^{i-1}).
\]
As a consequence we obtain that $\mathcal{H}_{p}^{(i)}$ are admissible affinoid subdomains of $\mathbb{P}^{1,rig}$.
If one proves that the cover $\{\mathcal{H}_{p}^{(i)}\}_{i>0}$ is admissible, then $\mathcal{H}_{p}$ is proved to be a rigid analytic variety over $\mathbb{Q}_{p}$ (following definitions of \cite[Ch. 9.3]{Bosch1984}). We refer the reader to \cite{SchneiderStuhler1991} for a proof of this.

\subsection{The Bruhat-Tits tree associated with $\mathrm{PGL}_{2}(\mathbb{Q}_{p})$}

	A lattice $M\subseteq \mathbb{Q}_{p}^{2}$ is a free $\mathbb{Z}_{p}$-module of rank $2$. 
	Two lattices $M,M'\subseteq \mathbb{Q}_{p}^{2}$ are said to be homothetic if there exists $\lambda\in \mathbb{Q}_{p}^{\times}$ such that $M'=\lambda M$. We will denote by $\{M\}$ the homothety class of $M$. 
	Given $\{M\},\{M^{'}\}$ we can always choose their representatives such that $p^{n}M\subseteq M'\subseteq M$, for some $n\in\mathbb{N}$ (cf. \cite[1.1]{Serre1977Arbres}). 
	We say that two homothety classes $\{M\},\{M'\}$  are \emph{adjacent} if their representatives can be chosen so that $pM\subsetneq M'\subsetneq M$.


\begin{defn} 
	We define the graph $\mathcal{T}_{p}$ whose set of vertices $\mathrm{Ver}(\mathcal{T}_{p})$ consists of the homothety classes of lattices of $\mathbb{Q}_{p}^{2}$ and whose set of oriented edges $\mathrm{Ed}(\mathcal{T}_{p})$ is the set of pairs of adjacent classes. One can also consider the set of unoriented edges, which is formed by unordered pairs of adjacent classes. The graph $\mathcal{T}_{p}$ is a $(p+1)$-regular tree (cf. \cite[Ch. II]{Serre1977Arbres}) which is known in the literature as the \emph{Bruhat-Tits tree} associated with $\mathrm{PGL}_{2}(\mathbb{Q}_{p})$.
\end{defn}


\begin{nota}\label{oriented_unoriented_edges}
	We denote by $v^{0}\in\mathcal{T}_{p}$ the vertex represented by the lattice $M^{0}:=\langle(1,0),(0,1)\rangle$.
	If $e=(v,v')\in\mathrm{Ed}(\mathcal{T}_{p})$ is an oriented edge then $-e:=(v',v)$ denotes the inverse edge. 
\end{nota}

The tree $\mathcal{T}_{p}$, that we have defined as a combinatorial object can be realised as a topological space by thinking of its vertices as classes of norms over $\mathbb{Q}_{p}$ (cf. \cite[Introduction, Sec. 1]{BoutotCarayol1991}).
With this topology, the map $\gamma\in\mathrm{PGL}_{2}(\mathbb{Q}_{p})\mapsto\gamma\cdot v\in\mathrm{Ver}(\mathcal{T}_{p})$ induces an homeomorphism 
\[
\mathrm{PGL}_{2}(\mathbb{Q}_{p})/\mathrm{PGL}_{2}(\mathbb{Z}_{p})\simeq \mathcal{T}_{p},
\] 
where $\mathrm{PGL}_{2}(\mathbb{Q}_{p})$ is taken with its natural topology.
Therefore, we can represent each vertex by a class of matrices.  Namely, if $v=\{M\}$ then $v$ is represented by the class $\{\alpha_{M}\}\in \mathrm{PGL}_{2}(\mathbb{Q}_{p})/\mathrm{PGL}_{2}(\mathbb{Z}_{p})$ such that $\alpha_{M}$ is the matrix whose columns form a basis of $M$.

\begin{defn}
	With this description in terms of matrices we can define the following ascending chain of subtrees of $\mathcal{T}_{p}$.
For every $i\geq 0$, denote by $\mathcal{T}_{p}^{(i)}$ the subtree of $\mathcal{T}_{p}$ whose set of vertices is 
$$\mathrm{Ver}(\mathcal{T}^{(i)}_{p}):=\{v=\{\alpha\}\mid\;v_{p}(\mathrm{det}\,\alpha)\leq i\}.$$ 
\end{defn}

\begin{remark}
If we look back at the admissible cover defined for the upper half-plane $\mathcal{H}_{p}$, then we observe that the set of representatives $\mathcal{P}_{i}$ for the points of the projective line $\mathbb{P}(\mathbb{Z}_{p}/p^{i}\mathbb{Z}_{p})$ corresponds bijectively with the set of ``added vertices'' of $\mathcal{T}_{p}^{(i)}$, i.e.,  there is a bijection of sets
\[
\mathrm{Ver}(\mathcal{T}^{(i)}_{p})\smallsetminus \mathrm{Ver}(\mathcal{T}^{(i-1)}_{p})\simeq\mathcal{P}_{i},
\]
for every $i\geq 1$.
The intuition then would suggest that removing the open balls in $\mathbb{P}^{1,rig}(\mathbb{C}_{p})$ with centre in $\mathcal{P}_{i}$ corresponds through this bijection to adding to $\mathcal{T}^{(i-1)}_{p}$ the missing vertices of $\mathcal{T}^{(i)}$. This intuition finds its theoretical explanation in the following theorem (cf. \cite{BoutotCarayol1991} and \cite{DasguptaTeitelbaumAWS}).
\end{remark}

\begin{teo}\label{red_map}
For every extension $\mathbb{Q}_{p}\subseteq L\subseteq\mathbb{C}_{p}$, there is a map 
$
\mathrm{Red}:\mathcal{H}_{p}(L)\rightarrow\mathcal{T}_{p}
$
such that
\begin{enumerate}[$(a)$]
\item
It is equivariant with respect to the action of $\mathrm{PGL}_{2}(\mathbb{Q}_{p})$, i.e.  
$\mathrm{Red}(\gamma\cdot z)=\gamma\cdot\mathrm{Red}(z)$
for every $z\in\mathcal{H}_{p}(L)$ and every $\gamma\in\mathrm{PGL}_{2}(\mathbb{Q}_{p})$.
\item
The image $\mathrm{Red}(\mathcal{H}_{p}^{(i)})$ is the rational geometric realisation of $\mathcal{T}_{p}^{(i-1)}$, for every $i\geq 0$. \qed
\end{enumerate}
\end{teo}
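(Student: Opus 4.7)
The natural approach is to construct $\mathrm{Red}$ via the Goldman--Iwahori identification of homothety classes of non-archimedean norms on $\mathbb{Q}_p^2$ with points of the geometric realisation of the Bruhat--Tits tree $\mathcal{T}_p$, and then to verify the two properties. To every $z=[z_0:z_1]\in\mathcal{H}_p(L)$ I would attach the non-archimedean norm $\nu_z$ on $\mathbb{Q}_p^2$ defined by $\nu_z(a,b):=|a z_1-b z_0|$. The hypothesis $z\notin\mathbb{P}^1(\mathbb{Q}_p)$ is exactly the $\mathbb{Q}_p$-linear independence of $z_0,z_1$ in $L$, which makes $\nu_z$ a genuine norm; rescaling the homogeneous coordinates of $z$ rescales $\nu_z$, so the homothety class $[\nu_z]$ depends only on $z$. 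The Goldman--Iwahori theorem then identifies the set of homothety classes of non-archimedean norms on $\mathbb{Q}_p^2$ with the geometric realisation of $\mathcal{T}_p$, a vertex $v=\{M\}$ corresponding to the lattice norm $x\mapsto\inf\{|\lambda|_p:x\in\lambda M\}$ and points on an edge to the natural interpolations between the two adjacent lattice norms. Set $\mathrm{Red}(z):=[\nu_z]$.

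The equivariance (a) is then formal: $\mathrm{GL}_2(\mathbb{Q}_p)$ acts on norms via $(\gamma\cdot\nu)(v):=\nu(\gamma^{-1}v)$, this descends to a $\mathrm{PGL}_2(\mathbb{Q}_p)$-action on homothety classes, and under Goldman--Iwahori it matches the natural action on $\mathcal{T}_p$ via $\gamma\cdot\{M\}=\{\gamma M\}$. A direct calculation using the M\"obius formula for $\gamma\cdot z$ and the defining expression of $\nu_z$ yields $\nu_{\gamma z}=|\det\gamma|\cdot(\gamma\cdot\nu_z)$, and passing to homothety classes gives $\mathrm{Red}(\gamma z)=\gamma\cdot\mathrm{Red}(z)$.

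For (b) I would compare the two sides explicitly. The vertices of $\mathcal{T}_p^{(i-1)}$ are those admitting, after rescaling, a representative lattice $M$ with $p^{i-1}M^0\subseteq M\subseteq M^0$; under Goldman--Iwahori this means $\mathrm{Red}(z)$ lies in the realisation of $\mathcal{T}_p^{(i-1)}$ if and only if, after normalising so that the standard norm $\|(a,b)\|_0:=\max(|a|,|b|)$ (representing $v^0$) and $\nu_z$ are comparable, one has $|p|^{i-1}\|\cdot\|_0\leq\nu_z\leq\|\cdot\|_0$. Evaluating $\nu_z$ on unimodular representatives of the classes in $\mathcal{P}_i$ and invoking the equivalent description of $\mathcal{H}_p^{(i)}(L)$ given after Lemma \ref{cns_bolas_disjuntas}, this condition translates into $|a_0 z_1-a_1 z_0|\geq|p|^{i-1}$ for every $a=[a_0:a_1]\in\mathcal{P}_i$, which is precisely the defining condition of $\mathcal{H}_p^{(i)}(L)$. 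The reverse inclusion is obtained by exhibiting explicit preimages in $\mathcal{H}_p^{(i)}(\mathbb{C}_p)$ for each point of the realisation of $\mathcal{T}_p^{(i-1)}$, using that one may always solve $|a z_1 - b z_0|=r$ over $\mathbb{C}_p$ for any prescribed value $r$.

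The main technical obstacle I expect is the bookkeeping in step (b): carefully tracking the normalisations of the unimodular representatives of the points $a\in\mathcal{P}_i$, of the homogeneous coordinates of $z$, and of the lattices within a homothety class, so that the combinatorial condition $v_p(\det\alpha)\leq i-1$ on the tree side exactly matches the metric threshold $|p|^{i-1}$ in the ball definition of $\mathcal{H}_p^{(i)}$. The Goldman--Iwahori correspondence itself and the compatibility of the two $\mathrm{PGL}_2(\mathbb{Q}_p)$-actions are standard and cause no difficulty once the construction is in place.
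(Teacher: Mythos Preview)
The paper does not actually prove this theorem: the \qed at the end of the statement signals that it is quoted as a known result, with references to \cite{BoutotCarayol1991} and \cite{DasguptaTeitelbaumAWS} given in the preceding paragraph. So there is no ``paper's own proof'' to compare against.

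Your sketch via the Goldman--Iwahori description of the tree is precisely the standard construction used in those references (in particular in Boutot--Carayol), so you are not doing anything different from the literature the paper cites. The construction of $\nu_z$ and the equivariance argument are correct. For part~(b) your outline is on the right track, but be aware that the honest verification requires more care than you indicate: one must show that the norm $\nu_z$ is equivalent to a lattice norm (a vertex) exactly when the values $|\nu_z|$ lie in $|\mathbb{Q}_p^\times|$, and otherwise lies on an open edge, and then match the combinatorial distance from $v^0$ to the exponent appearing in the ball radii. Your phrase ``after normalising so that \ldots\ are comparable'' hides the actual computation; in practice one fixes unimodular coordinates for $z$ (say $\max(|z_0|,|z_1|)=1$) and then reads off the position of $[\nu_z]$ on the path from $v^0$ towards the boundary point determined by the reduction of $z$ mod $p$. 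None of this is difficult, but it is where all the content of (b) lives, so if you intend this as more than a citation you should spell it out.
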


	We write this map as $\mathrm{Red}:\mathcal{H}_{p}\rightarrow\mathcal{T}_{p}$ and call it the \emph{reduction map} associated with $\mathcal{H}_{p}$.
	
\begin{remark}
	The reduction map owes its name to its intimate relation with the usual reduction mod $p$.
	Actually, the Tate algebra of series of the affinoid subdomains $\mathcal{H}_{p}^{(i)}$ can be reduced mod $p$ through reducing mod $p$ the restricted series, and this gives an algebraic variety over $\mathbb{F}_{p}$. 
It is an exercise to see that the dual graph of the reduction mod $p$ of $\mathcal{H}_{p}^{(i)}$ 
is the tree $\mathcal{T}_{p}^{(i-1)}$. 
	This is done in \cite[2.3]{BoutotCarayol1991} where, in fact, the affinoid subdomain $\mathcal{H}_{p}^{(i)}$ is defined by $\mathcal{H}_{p}^{(i)}:=\mathrm{Red}^{-1}(\mathcal{T}_{p,\mathbb{Q}}^{(i-1)})$.
\end{remark}


\subsection{The \v{C}erednik-Drinfel'd Theorem}
Let $B$ be an indefinite quaternion algebra over $\mathbb{Q}$ of discriminant $Dp$, and let $\mathcal{O}_{B}=\mathcal{O}_{B}(N)\subseteq B$ be an Eichler order of level $N$. Once we have fixed a real matrix immersion $\Phi_{\infty}:B\hookrightarrow \mathrm{M}_{2}(\mathbb{R})$, we consider the following discrete subgroup of $\mathrm{PGL}_{2}(\mathbb{R})_{>0}\simeq\mathrm{PSL}_{2}(\mathbb{R})$:
\[
\Gamma_{\infty,+}:=\Phi_{\infty}(\{\alpha\in\mathcal{O}_{B}^{\times}\mid\,\mathrm{Nm}(\alpha)>0\})/\mathbb{Z}^{\times}.
\] 
By a fundamental result of Shimura \cite[Main Theorem I]{Shimura1967}, there exists a proper algebraic curve $X(Dp,N)$ over $\mathbb{Q}$ whose complex points are parametrised by a holomorphic bijective map 
$
J:\Gamma_{\infty,+}\backslash\mathcal{H}\stackrel{\sim}{\rightarrow} X(Dp,N)(\mathbb{C}),
$ 
and which is characterised, up to isomorphisms over $\mathbb{Q}$, by the arithmetic property that the values of $J$ at certain special parameters $\tau\in\Gamma_{\infty,+}\backslash\mathcal{H}$ are algebraic points $J(\tau)\in X(Dp,N)(\mathbb{Q}^{ab})$ defined over some ray class field of $\mathbb{Q}$. 
	The curve $X(Dp,N)$ satisfying this property is called \emph{the Shimura curve of discriminant $Dp$ and level $N$}.

Since $X(Dp,N)$ is a scheme of locally finite type over $\mathbb{Q}$ we can consider, on one hand, its complex analytification $(X(Dp,N)\otimes_{\mathbb{Q}}\mathbb{C})^{an}$ (given by Serre's GAGA functor), which is a complex manifold, and on the other hand its $p$-adic rigidification $(X(Dp,N)\otimes_{\mathbb{Q}}\mathbb{Q}_{p})^{rig}$ which is a rigid analytic variety over $\mathbb{Q}_{p}$ (cf. \cite{Bosch2014}). 
While the complex analytification is uniformised by the discrete cocompact subgroup $\Gamma_{\infty,+}\subseteq\mathrm{Aut}(\mathcal{H})$, as we have recalled above, the $p$-adic rigidification also admits a uniformisation by a discrete cocompact subgroup $\Gamma_{p}\subseteq\mathrm{PGL}_{2}(\mathbb{Q}_{p})\simeq\mathrm{Aut}(\mathcal{H}_{p})$, which is known as the $p$-adic uniformisation of the Shimura curve $X(Dp,N)$ (or as the \v{C}erednik-Drinfel'd uniformisation).
	The group $\Gamma_{p}$ is defined, following \v{C}erednik \cite[Theorem 2.1]{Cerednik1976B}, by \emph{interchanging the local invariants} $p$ and $\infty$ in the quaternion algebra $B$. 

	More specifically, let $H$ be the definite quaternion algebra of discriminant $D$, let $\mathcal{O}_{H}=\mathcal{O}_{H}(N)\subseteq H$ be an Eichler order of level $N$, and consider the localised order $\mathcal{O}_{H}[1/p]:=\mathcal{O}_{H}\otimes_{\mathbb{Z}}\mathbb{Z}[1/p]$ over $\mathbb{Z}[1/p]$. 
Once we have fixed a $p$-adic matrix immersion $\Phi_{p}:H\hookrightarrow\mathrm{M}_{2}(\mathbb{Q}_{p})$, let us consider the following discrete cocompact subgroups of $\mathrm{PGL}_{2}(\mathbb{Q}_{p})$:
\begin{enumerate}[(i)]
	\item $\Gamma_{p}:=\Phi_{p}(\mathcal{O}_{H}[1/p]^{\times})/\mathbb{Z}[1/p]^{\times},$
	\item $\Gamma_{p,+}:=\Phi_{p}(\{\alpha\in\mathcal{O}_{H}[1/p]^{\times}\mid\,v_{p}(\mathrm{Nm}_{H/\mathbb{Q}}(\alpha))\equiv 0\;\mathrm{mod}\,2\})/\mathbb{Z}[1/p]^{\times}.$
\end{enumerate}
	The \v{C}erednik-Drinfel'd Theorem can then be stated in the following way.

\begin{teo}\label{CD}
	There is an isomorphism of rigid analytic varieties over $\mathbb{Q}_{p}$
\[
\Gamma_{p}\backslash(\mathcal{H}_{p}\otimes_{\mathbb{Q}_{p}}\mathbb{Q}_{p^{2}})\simeq (X(Dp,N)\otimes_{\mathbb{Q}}\mathbb{Q}_{p})^{rig},
\]
and an isomorphism of rigid analytic varieties over $\mathbb{Q}_{p^{2}}$
\[
(\Gamma_{p,+}\backslash\mathcal{H}_{p})\otimes_{\mathbb{Q}_{p}}\mathbb{Q}_{p^{2}}\simeq 
(X(Dp,N)\otimes_{\mathbb{Q}}\mathbb{Q}_{p})^{rig}\otimes_{\mathbb{Q}_{p}}\mathbb{Q}_{p^{2}}.
\]
	In particular, the $p$-adic Shimura curve $X(Dp,N)\otimes_{\mathbb{Q}}\mathbb{Q}_{p}$ is a quadratic twist over $\mathbb{Q}_{p^{2}}$ of an algebraic curve $X_{p,+}$ such that $\Gamma_{p,+}\backslash\mathcal{H}_{p}\simeq X_{p,+}^{rig}.$
\qed
\end{teo}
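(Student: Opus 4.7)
The plan is to follow the classical Čerednik--Drinfel'd strategy, whose driving idea is the ``interchange of local invariants'' at $p$ and $\infty$: the indefinite algebra $B$ of discriminant $Dp$ and the definite algebra $H$ of discriminant $D$ differ only at $p$ and $\infty$, so there is a canonical identification of their finite idelic groups away from $p$, via local isomorphisms $B_\ell \simeq H_\ell$ for $\ell\nmid Dp$ and a matching of the ramified data at $\ell \mid D$. The proof then translates Shimura's complex uniformisation into an adelic double-coset space, transports the prime-to-$p$ piece from $B$ to $H$, and uses Drinfel'd's moduli-theoretic result to identify the $p$-component with $\mathcal{H}_p$.

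First I would adelise Shimura's theorem. Strong approximation for the reduced-norm-one group $B^{(1)}$ (available because $B$ is split at $\infty$) yields
$$
X(Dp,N)(\mathbb{C}) \simeq B^\times \backslash \bigl( \mathcal{H}^\pm \times B^\times(\mathbb{A}_f) / \widehat{\mathcal{O}}_B^\times \bigr).
$$
Next, I would invoke the moduli interpretation of $X(Dp,N)$ as the coarse moduli space parametrising abelian surfaces equipped with an action of $\mathcal{O}_B$ and a level-$N$ structure. This produces an integral model over $\mathbb{Z}_p$, and by the Serre--Tate theorem the deformation theory of its mod-$p$ points is governed by deformations of the associated $p$-divisible groups together with their $\mathcal{O}_{B_p}$-action, where $B_p$ is the local quaternion division algebra over $\mathbb{Q}_p$.

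The crucial input is Drinfel'd's theorem, which identifies the formal scheme classifying deformations of a special formal $\mathcal{O}_{B_p}$-module with Deligne's formal model $\widehat{\Omega}\,\widehat{\otimes}_{\mathbb{Z}_p}\mathbb{Z}_p^{nr}$ of the $p$-adic upper half-plane base changed to the maximal unramified extension. Glueing this local uniformisation with the adelic trivialisation of prime-to-$p$ level structure produces the $p$-adic adelic uniformisation
$$
(X(Dp,N)\otimes_{\mathbb{Q}}\mathbb{Q}_p)^{rig} \simeq H^\times \backslash \bigl( (\mathcal{H}_p \otimes_{\mathbb{Q}_p}\mathbb{Q}_{p^2}) \times H^\times(\mathbb{A}_f)/\widehat{\mathcal{O}}_H^\times \bigr).
$$
Applying strong approximation for $H$ relative to the non-compact place $p$ (which now plays the role that $\infty$ played in the complex setting) collapses this double coset to a single quotient by the image of $\mathcal{O}_H[1/p]^\times/\mathbb{Z}[1/p]^\times = \Gamma_p$, giving the first displayed isomorphism.

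The second isomorphism is then a descent argument. The subgroup $\Gamma_{p,+} \subset \Gamma_p$ is the kernel of the parity-of-$v_p\circ\mathrm{Nm}_{H/\mathbb{Q}}$ character, hence of index two, and the non-trivial element of $\mathrm{Gal}(\mathbb{Q}_{p^2}/\mathbb{Q}_p)$ permutes the two Frobenius-swapped connected components of $\mathcal{H}_p\otimes\mathbb{Q}_{p^2}$; the larger quotient by $\Gamma_p$ identifies these components while the quotient by $\Gamma_{p,+}$ keeps them separate, exhibiting $(\Gamma_{p,+}\backslash\mathcal{H}_p)\otimes\mathbb{Q}_{p^2}$ as the base change and thereby realising the Shimura curve as a quadratic twist over $\mathbb{Q}_{p^2}$. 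The main obstacle in the argument is Drinfel'd's theorem itself on the deformation functor of special formal $\mathcal{O}_{B_p}$-modules, which I would cite rather than attempt to reprove; once granted, the remaining ingredients—adelisation of Shimura's theorem, prime-to-$p$ level bookkeeping, strong approximation for $H$, and the Galois descent identifying the twist—are essentially formal manipulations.
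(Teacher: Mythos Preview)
The paper does not prove this theorem at all: the \qed at the end of the statement signals that it is quoted as a known result, namely the \v{C}erednik--Drinfel'd theorem, with the original references \cite{Cerednik1976B} and \cite{Drinfeld1976} cited just before the statement. The only additional comment the paper offers is the remark immediately following, which says that the first isomorphism implies the second and points to \cite{JordanLivne1984}, \cite{deVera_thesis}, and \cite{PM_tesis} for details.

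Your outline is a faithful sketch of the classical proof strategy (adelise the complex uniformisation, interchange invariants at $p$ and $\infty$, invoke Drinfel'd's theorem on special formal modules, then apply strong approximation for $H$ at the place $p$, and finally do the index-two descent), so in content it is reasonable and goes well beyond what the paper attempts. But since the paper treats this as a black-box citation, there is nothing to compare against: your proposal is not so much an alternative proof as an expansion of a result the authors deliberately import without argument.
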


\begin{remark}
	The first isomorphism of the theorem is defined over $\mathbb{Q}_{p}$. This means that $\mathcal{H}_{p}\otimes_{\mathbb{Q}_{p}}\mathbb{Q}_{p^{2}}$, which is naturally a rigid analytic variety over $\mathbb{Q}_{p^{2}}$, is considered as a rigid analytic variety over $\mathbb{Q}_{p}$. 
	In addition to this, it can be seen that the first isomorphism implies the second one (for more details cf.  \cite{JordanLivne1984}, \cite{deVera_thesis}, and \cite{PM_tesis}).
\end{remark}

\begin{remark}
Even if the $\mathbb{Z}$-order $\mathcal{O}_{H}$ is not always unique up to conjugation, the $\mathbb{Z}[1/p]$-order  $\mathcal{O}_{H}[1/p]$ is, since this satisfies Eichler's condition (cf. \cite[Corollaire 5.7]{Vigneras1980}). Thus the conjugacy class of $\Gamma_{p}$ inside $\mathrm{PGL}_{2}(\mathbb{Q}_{p})$ is well determined and gives a rigid analytic variety $\Gamma_{p}\backslash\mathcal{H}_{p}$ well-defined up to rigid analytic isomorphisms.
Similar arguments as in the complex case show that the isomorphism class over $\mathbb{Q}$ of $X(Dp,N)$ does not depend on the Eichler order chosen in the conjugacy class of $\mathcal{O}_{B}(N)$. 
\end{remark}

\begin{remark}
	The groups $\Gamma_{p}$ and $\Gamma_{p,+}$ are not always torsion-free but, as we will see, they admit torsion-free, normal and finite index subgroups (cf. Lemma \ref{teo_estructura_grupos}). Therefore the curve $X_{p,+}$ is actually a finite quotient of a Mumford curve (see Remark \ref{remark_finite_quotient}).
\end{remark}

	Now recall that since the curve $X(Dp,N)\otimes_{\mathbb{Q}}\mathbb{Q}_{p}$ is proper, by a theorem of Raynaud its rigidification can be interpreted as the generic fibre of a $\mathbb{Z}_{p}$-formal scheme, which is obtained as the completion of an adequate integral model of $X(Dp,N)$ along its special fibre.

\begin{defn}\label{Drinfeld_model}
	We call \emph{Drinfel'd integral model} over $\mathbb{Z}_{p}$ of the Shimura curve $X(Dp,N)$, the integral model $\mathcal{X}(Dp,N)$ over $\mathbb{Z}_{p}$ whose completion along its special fibre is the formal scheme
$\Gamma_{p}\backslash(\widehat{\mathcal{H}}_{p}\otimes_{\mathrm{Spf}\,\mathbb{Z}_{p}}\mathrm{Spf}\,\mathbb{Z}_{p^{2}})$ over $\mathrm{Spf}\,\mathbb{Z}_{p}$.
\end{defn}

Drinfel'd constructed this integral model in \cite{Drinfeld1976} as a solution of a moduli problem over $\mathbb{Z}_{p}$, extending the modular interpretation of $X(Dp,N)(\mathbb{C})$.  

\begin{cor}\label{coro_reduction}
The reduction-graph of the special fibre of $\mathcal{X}(Dp,N)$ is the graph $\Gamma_{p,+}\backslash\mathcal{T}_{p}$.
\end{cor}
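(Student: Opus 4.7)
The plan is to extract the reduction-graph directly from the rigid-analytic description of the formal completion of $\mathcal{X}(Dp,N)$ given in Definition \ref{Drinfeld_model}, using the combinatorial reduction map of Theorem \ref{red_map}. The conceptual input is that the special fibre of $\widehat{\mathcal{H}}_p$ is the Deligne--Mumford semistable scheme whose dual graph is exactly the Bruhat--Tits tree $\mathcal{T}_p$ (this is what the remark following Theorem \ref{red_map} records): it is a union of projective lines over $\mathbb{F}_p$ indexed by $\mathrm{Ver}(\mathcal{T}_p)$ meeting at ordinary double points indexed by $\mathrm{Ed}(\mathcal{T}_p)$. Consequently, taking a quotient of $\widehat{\mathcal{H}}_p$ by a discrete subgroup of $\mathrm{PGL}_2(\mathbb{Q}_p)$ and then reducing mod $p$ should yield a curve whose dual graph is the corresponding quotient of $\mathcal{T}_p$.

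First I would pass from the description $\Gamma_p\backslash(\widehat{\mathcal{H}}_p\otimes_{\mathrm{Spf}\,\mathbb{Z}_p}\mathrm{Spf}\,\mathbb{Z}_{p^2})$ to a description after base change to $\mathrm{Spf}\,\mathbb{Z}_{p^2}$. The elements of $\Gamma_p\smallsetminus\Gamma_{p,+}$ act on $\widehat{\mathcal{H}}_p\otimes\mathrm{Spf}\,\mathbb{Z}_{p^2}$ through the rigid automorphism composed with the nontrivial element of $\mathrm{Gal}(\mathbb{Q}_{p^2}/\mathbb{Q}_p)$; this is precisely the way in which the quadratic twist of Theorem \ref{CD} is implemented at the integral level, so that after tensoring with $\mathbb{Z}_{p^2}$ one obtains $(\Gamma_{p,+}\backslash\widehat{\mathcal{H}}_p)\otimes\mathrm{Spf}\,\mathbb{Z}_{p^2}$. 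Since the reduction-graph is a purely combinatorial invariant of the geometric special fibre, and base change to $\mathbb{Z}_{p^2}$ (or to $\mathbb{Z}_p^{nr}$) does not change it, it suffices to compute the dual graph of the special fibre of $\Gamma_{p,+}\backslash\widehat{\mathcal{H}}_p$.

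Next I would reduce mod $p$ and transfer the quotient to the tree by the equivariance of the reduction map. By Theorem \ref{red_map}(a), the map $\mathrm{Red}$ intertwines the action of $\Gamma_{p,+}\subseteq\mathrm{PGL}_2(\mathbb{Q}_p)$ on $\mathcal{H}_p$ with the natural action on $\mathcal{T}_p$, and this action extends to the identification of $\overline{\mathcal{H}}_p$ with its dual graph $\mathcal{T}_p$. Hence the components and intersection points of the special fibre of $\Gamma_{p,+}\backslash\widehat{\mathcal{H}}_p$ are indexed by $\Gamma_{p,+}\backslash\mathrm{Ver}(\mathcal{T}_p)$ and $\Gamma_{p,+}\backslash\mathrm{Ed}(\mathcal{T}_p)$ respectively, i.e.\ the reduction-graph is $\Gamma_{p,+}\backslash\mathcal{T}_p$, provided that the action of $\Gamma_{p,+}$ is without edge-inversions. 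This last point is exactly the reason one introduces $\Gamma_{p,+}$ rather than $\Gamma_p$: the condition $v_p(\mathrm{Nm}(\alpha))\equiv 0\pmod 2$ is equivalent to preserving the bipartition of $\mathcal{T}_p$, and in particular to mapping any edge to an edge of the same orientation class.

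The step I expect to require the most care is the bookkeeping for the quadratic twist in the second paragraph: one must verify that although $\mathcal{X}(Dp,N)$ is defined over $\mathbb{Z}_p$ and its formal completion is described a priori as a quotient by the larger group $\Gamma_p$, the geometric reduction-graph is nevertheless the finer quotient $\Gamma_{p,+}\backslash\mathcal{T}_p$, with the remaining $\mathbb{Z}/2\mathbb{Z}$-action absorbed into the Galois descent data from $\mathbb{F}_{p^2}$ to $\mathbb{F}_p$. This is essentially the same subtlety highlighted in the remark following Theorem \ref{CD}, and once it is settled, the corollary follows by combining the equivariance of $\mathrm{Red}$ with the combinatorial identification of the special fibre of $\widehat{\mathcal{H}}_p$ with $\mathcal{T}_p$.
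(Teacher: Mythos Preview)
Your proposal is correct and follows essentially the same approach as the paper, which simply states that the corollary is a direct consequence of combining Theorems \ref{CD} and \ref{red_map}. You have unpacked that one-line proof in detail, correctly handling the passage from $\Gamma_p$ to $\Gamma_{p,+}$ via the quadratic twist and the equivariance of the reduction map; the paper leaves all of this implicit.
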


\begin{proof}
Corollary \ref{coro_reduction} is a direct consequence of a combination of Theorems \ref{CD} and \ref{red_map}.
\end{proof}


\subsection{$p$-adic Schottky groups and good fundamental domains}
In this section we briefly present the theory of Mumford curves (cf. \cite{Mumford1972}) and relate it to the theory of $p$-adic uniformisation of Shimura curves. Moreover we recall the notion of \emph{good} fundamental domains, following \cite{Gerritzen_vanderPut1980}.



\begin{defn}\label{Def_Schottky_groups}
A subgroup $\Gamma\subseteq\mathrm{PGL}_{2}(\mathbb{Q}_{p})$ is called a \emph{$p$-adic Schottky group} if:
\begin{enumerate}[(i)]
\item
it is discontinuous, i.e. its associated set of limit points $\mathcal{L}_{\Gamma}$ is different from $\mathbb{P}^{1}(\mathbb{C}_{p})$,
\item
it is finitely generated,
\item
it has no elements of finite order different from the identity $\mathrm{I}_{2}$; equivalently, all transformations $\gamma\in\Gamma,\gamma\neq\mathrm{I}_{2}$, are hyperbolic.
\end{enumerate}
\end{defn}  

	Given a Schottky group $\Gamma\subseteq\mathrm{PGL}_{2}(\mathbb{Q}_{p})$, denote by $\widehat{\mathcal{H}}_{\Gamma}$ the admissible formal scheme over $\mathrm{Spf}\,\mathbb{Z}_{p}$ whose generic fibre is the rigid analytic variety $\mathcal{H}_{\Gamma}:=\mathbb{P}^{1,rig}\smallsetminus\mathcal{L}_{\Gamma}$, and by $\mathcal{T}_{\Gamma}$ the dual graph of its special fibre. Since $\mathcal{L}_{\Gamma}\subseteq\mathbb{P}^{1}(\mathbb{Q}_{p})$, we have that $\mathcal{H}_{\Gamma}$ is a subdomain of $\mathcal{H}$, and $\mathrm{Ver}(\mathcal{T}_{\Gamma})$ is a subset of $\mathrm{Ver}(\mathcal{T})$. We also know that, actually, $\mathcal{T}_{\Gamma}$ is a subtree of $\mathcal{T}$.
	In his celebrated paper \cite{Mumford1972} Mumford proved that for every Schottky group $\Gamma$ there exists a proper curve $\mathcal{C}_{\Gamma}$ over $\mathbb{Z}_{p}$ such that $\Gamma\backslash\widehat{\mathcal{H}}_{\Gamma}\simeq\widehat{\mathcal{C}}_{\Gamma}$ is an isomorphism of formal schemes over $\mathrm{Spf}\,\mathbb{Z}_{p}$. 
The curve $\mathcal{C}_{\Gamma}$ is a stable curve (in the sense of Deligne and Mumford) such that its special fibre $\mathcal{C}\otimes_{\mathbb{Z}_{p}}\mathbb{F}_{p}$ is $\mathbb{F}_{p}$-split degenerate and its dual graph is $\Gamma\backslash\mathcal{T}_{\Gamma}$ (\cite[Definition 3.2 and Theorem 3.3]{Mumford1972}). Mumford also proved that the correspondence $\Gamma\mapsto\mathcal{C}_{\Gamma}$ induces a bijection between the set of isomorphic classes of stable curves $\mathcal{C}$ over $\mathbb{Z}_{p}$ with $\mathbb{F}_{p}$-split degenerate special fibre, and the set of conjugacy classes of Schottky groups $\Gamma\subseteq\mathrm{PGL}_{2}(\mathbb{Q}_{p})$.  
Finally it can be proved that when $\Gamma\subseteq\mathrm{PGL}_{2}(K)$ is cocompact, then $\mathcal{L}_{\Gamma}=\mathbb{P}^{1}(\mathbb{Q}_{p})$, $\mathcal{H}_{\Gamma}=\mathcal{H}_{p}$, and $\mathcal{T}_{\Gamma}=\mathcal{T}_{p}$ (cf. \cite{PM_tesis}).

\begin{defn}\label{dom_fund_grupos_Schottky}
Let $\Gamma\subseteq\mathrm{PGL}_{2}(\mathbb{Q}_{p})$ be a Schottky group of rank $g$ and $S=\{\gamma_{1},\dots,\gamma_{g}\}$ a system of generators for $\Gamma$. 
A \emph{good fundamental domain} for $\Gamma$ with respect to $S$ is an admissible subdomain of $\mathcal{H}_{p}$,
$
\mathcal{F}_{\Gamma}=\mathbb{P}^{1}(\mathbb{C}_{p})\smallsetminus\cup_{i=1}^{2g}\mathbb{B}^{-}(\alpha_{i},\rho_{i}),
$
which satisfies the following conditions:
\begin{enumerate}[(i)]
\item
the centres $\alpha_{i}$ are in $\mathbb{P}^{1}(\mathbb{Q}_{p})$, for every $1\leq i\leq 2g$,
\item
the closed balls $\mathbb{B}^{+}_{i}(\alpha_{i},\rho_{i})$, for $1\leq i\leq 2g$, are pair-wise disjoint,
\item
for every $1\leq i\leq g$,
\[\begin{array}{cc}
\gamma_{i}(\mathbb{P}^{1}(\mathbb{C}_{p})\smallsetminus \mathbb{B}^{-}(\alpha_{i},\rho_{i}))=\mathbb{B}^{+}(\alpha_{i+g},\rho_{i+g}), & 
\gamma_{i}(\mathbb{P}^{1}(\mathbb{C}_{p})\smallsetminus \mathbb{B}^{+}(\alpha_{i},\rho_{i}))=\mathbb{B}^{-}(\alpha_{i+g},\rho_{i+g}).
\end{array}\]
\end{enumerate}
\end{defn}


	In \cite{Gerritzen1974} and \cite[Ch. I]{Gerritzen_vanderPut1980} the existence of good fundamental domains for Schottky groups is proved using the non-archimedean analogue of Ford's method of isometry circles.
	The fact that the domain $\mathcal{F}_{\Gamma}$ defined above is a fundamental domain (in the usual sense) for the action of $\Gamma$ is proved in \cite[Proposition 4.1]{Gerritzen_vanderPut1980}.

The following result is a $p$-adic analogue of a well-known result about discrete subgroups of $\mathrm{PSL}_{2}(\mathbb{R})$, first proved by Selberg (\cite[Lemma 8]{Selberg1960}). 
	Since this is at the base of the method that we present to find Mumford curves covering $p$-adic Shimura curves, we sketch a proof here.
 
\begin{teo}\label{teo_estructura_grupos}
Let $\Gamma\subseteq\mathrm{PGL}_{2}(\mathbb{Q}_{p})$ be a discontinuous and finitely generated group.  Then there exists a normal subgroup $\Gamma^{Sch}$ of finite index which is torsion-free. In particular $\Gamma^{Sch}$ is a $p$-adic Schottky group.
\end{teo}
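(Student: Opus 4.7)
The plan is to adapt Selberg's lemma to the $p$-adic linear group $\mathrm{PGL}_{2}(\mathbb{Q}_{p})$. I would construct $\Gamma^{Sch}$ as the kernel in $\Gamma$ of a reduction homomorphism modulo a well-chosen maximal ideal of a finitely generated $\mathbb{Z}$-subalgebra of $\mathbb{Q}_{p}$. Once this kernel is shown to be torsion-free, normal, and of finite index, the three conditions in Definition \ref{Def_Schottky_groups} all fall out: discontinuity is inherited since $\mathcal{L}_{\Gamma^{Sch}}\subseteq\mathcal{L}_{\Gamma}\neq\mathbb{P}^{1}(\mathbb{C}_{p})$, finite generation of a finite-index subgroup of a finitely generated group is Reidemeister--Schreier, and torsion-freeness is exactly condition (iii) (equivalently, every non-identity element is hyperbolic).

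The construction itself is standard. Lift a finite generating set $g_{1},\dots,g_{m}$ of $\Gamma$ to matrices $\widetilde{g}_{i}\in\mathrm{GL}_{2}(\mathbb{Q}_{p})$, and let $R\subseteq\mathbb{Q}_{p}$ be the $\mathbb{Z}$-subalgebra generated by all entries of the $\widetilde{g}_{i}$ together with $\det(\widetilde{g}_{i})^{-1}$. Then $R$ is a finitely generated $\mathbb{Z}$-algebra, hence Jacobson, and by the arithmetic Nullstellensatz its residue field at every maximal ideal $\mathfrak{m}$ is finite. The composition $\Gamma\to\mathrm{PGL}_{2}(R/\mathfrak{m})$ then has finite image, so its kernel $\Gamma^{Sch}$ is a normal subgroup of $\Gamma$ of finite index. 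It remains only to choose $\mathfrak{m}$ so that $\Gamma^{Sch}$ is torsion-free.

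The main obstacle is precisely this torsion-freeness. The key observation is that the orders of torsion elements of $\mathrm{PGL}_{2}(\mathbb{Q}_{p})$ are globally bounded by a constant $N_{p}$ depending only on $p$: any torsion lift $A\in\mathrm{GL}_{2}(\mathbb{Q}_{p})$ satisfying $A^{n}=\lambda I$ is diagonalisable over $\overline{\mathbb{Q}}_{p}$, and the ratio of its eigenvalues is a primitive $n$-th root of unity lying in a quadratic extension of $\mathbb{Q}_{p}$; since each of the finitely many quadratic extensions of $\mathbb{Q}_{p}$ contains only finitely many roots of unity, only finitely many orders $n$ can occur. Next, I would invoke the fact that $R$, being a finitely generated $\mathbb{Z}$-algebra of positive Krull dimension, admits maximal ideals of arbitrarily large residue characteristic, and choose $\mathfrak{m}$ with residue characteristic $\ell$ coprime to $N_{p}!$. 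Then reduction mod $\mathfrak{m}$ is injective on the finite group of roots of unity of order at most $N_{p}$, and hence any $\gamma\in\Gamma^{Sch}$ of finite order must be trivial. The delicate point to verify is that the reduction remains meaningful on the possibly quadratic extension where the eigenvalues of $A$ live, and that the scalar $\lambda$ can be normalised so that the image of $A$ is genuinely a torsion element in $\mathrm{PGL}_{2}(R/\mathfrak{m})$; once this is established the proof concludes.
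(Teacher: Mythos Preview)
Your approach is correct and follows the same Selberg-lemma strategy as the paper: embed $\Gamma$ in $\mathrm{PGL}_{2}(R)$ for a finitely generated $\mathbb{Z}$-subalgebra $R\subseteq\mathbb{Q}_{p}$, and take $\Gamma^{Sch}$ to be the congruence kernel modulo a well-chosen maximal ideal $\mathfrak{m}$. The difference lies only in how $\mathfrak{m}$ is selected to kill torsion.

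The paper does not use your global bound $N_{p}$ on torsion orders. Instead it invokes the fact (from \cite[Lemma 3.3.2]{Gerritzen_vanderPut1980}, which uses the discontinuity hypothesis) that $\Gamma$ has only finitely many conjugacy classes of finite-order elements. Picking one representative $\sigma$ from each class, one simply chooses $\mathfrak{m}$ so that $\sigma-\mathrm{I}_{2}\notin\mathfrak{m}$ for every such $\sigma$; since there are finitely many $\sigma$ and each excludes only a proper closed subset of $\mathrm{Spec}\,R$, such an $\mathfrak{m}$ exists. Normality of $\Gamma(\mathfrak{m})$ then excludes all torsion at once. This sidesteps entirely the ``delicate point'' you flag about extending the reduction to the quadratic extension carrying the eigenvalues, because the argument never leaves $R$.

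Your route, by contrast, does not use discontinuity to bound torsion (only the ambient structure of $\mathrm{PGL}_{2}(\mathbb{Q}_{p})$), which is slightly more general; and your delicate point is genuinely resolvable: for a given torsion $[A]$, pass to the finite extension $R'=R[\alpha]$ generated by an eigenvalue, lift $\mathfrak{m}$ to $\mathfrak{m}'$, and note that $A\equiv\lambda I\pmod{\mathfrak{m}}$ forces the eigenvalue ratio $\zeta_{n}\equiv 1\pmod{\mathfrak{m}'}$, impossible when $\gcd(n,\ell)=1$. So both arguments are valid; the paper's is shorter because the finiteness of torsion conjugacy classes lets it avoid eigenvalues altogether.
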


\begin{proof}
Let $S$ be a subset of elements of finite order in $\Gamma$ such that every element of finite order in $\Gamma$ is conjugated to an element in $S$. It is easy to prove that the set $S$ can be taken to be finite (cf. \cite[Lemma 3.3.2]{Gerritzen_vanderPut1980}).
Since $\Gamma$ is finitely generated, we can find a finitely generated ring $R\subseteq\mathbb{Q}_{p}$ over $\mathbb{Z}$ such that $\Gamma\subseteq\mathrm{PGL}_{2}(R)\subseteq\mathrm{PGL}_{2}(\mathbb{Q}_{p})$.
Since $S$ is finite, there exists a maximal ideal $\mathfrak{m}\subseteq R$ such that, for every $\sigma\in S$, $\sigma\neq \mathrm{I}_{2}$, we have $\sigma-\mathrm{I}_{2}\notin\mathfrak{m}$. Therefore, the group 
\[
\Gamma^{Sch}:=\Gamma(\mathfrak{m}):=\{[\gamma]\in \Gamma\mid\,\gamma-\mathrm{I}_{2}\in\mathfrak{m}\}
\]
is a normal subgroup of $\Gamma$ of finite index (since $R/\mathfrak{m}$ is a finite field) and  from the fact that $\Gamma(\mathfrak{m})$ is normal it follows that $\Gamma(\mathfrak{m})$ does not contain any of the elements of finite order in $\Gamma$.
\end{proof}

\begin{remark}\label{remark_finite_quotient}
	This is one of the key steps in the proof of the \v{C}erednik-Drinfel'd Theorem, since it allows reducing to the case of Mumford curves through a finite cover.  
	By this result we find that the curve $X_{p,+}$ of Theorem \ref{CD} is a finite quotient of a Mumford curve. Therefore the $p$-adic Shimura curve is a quadratic twist of a finite quotient of a Mumford curve.
\end{remark}

\section{Modular arithmetic in definite quaternion algebras}\label{Sec2}

In this section we extend the notion of \textit{primary quaternions} introduced in \cite{Hurwitz1896} for the order of Hurwitz quaternions, to Eichler $\mathbb{Z}$-orders $\mathcal{O}$ over $\mathbb{Z}$ with $h(D,N)=1$. This notion will turn out to be crucial in order to prove a unique factorisation result (Theorem \ref{Zerlegungssatz}) in these orders, a result that extends the \textit{Zerlegunsgsatz} for the Hurwitz quaternions.

Let $H$ denote a definite quaternion algebra over $\mathbb{Q}$ of discriminant $D$ and let $\mathcal{O}\subset H$ denote an Eichler order over $\mathbb{Z}$ of level $N$. Fix a a place $\ell$ of $\mathbb{Q}$ and let $\mathcal{O}[1/\ell]:=\mathcal{O}\otimes_\mathbb{Z}\mathbb{Z}[1/\ell]$ denote the corresponding order over $\mathbb{Z}[1/\ell]$. In particular note that if $\ell=\infty$ then $\mathbb{Z}[1/\ell]=\mathbb{Z}$. 

Let $\mathfrak{a}\subseteq H$ be a $\mathbb{Z}[1/\ell]$-ideal (i.e. a $\mathbb{Z}[1/\ell]$-module of rank $4$). We denote by $\mathcal{O}_l(\mathfrak{a}):=\{\alpha\in H\mid\, \alpha\mathfrak{a}\subseteq\mathfrak{a}\}$ its left order and by $\mathcal{O}_r(\mathfrak{a}):=\{\alpha\in H\mid\, \mathfrak{a}\alpha\subseteq\mathfrak{a}\}$ its right order.
Then $\mathfrak{a}$ is said to be a \textit{left ideal} of $\mathcal{O}_l(\mathfrak{a})$ and a \textit{right ideal} of $\mathcal{O}_r(\mathfrak{a})$, or a \emph{one-sided} ideal of $\mathcal{O}_l(\mathfrak{a})$ and $\mathcal{O}_r(\mathfrak{a})$. The ideal $\mathfrak{a}$ is said to be \textit{integral} if it is contained both in $\mathcal{O}_l(\mathfrak{a})$ and $\mathcal{O}_r(\mathfrak{a})$, \emph{two-sided} if $\mathcal{O}_l(\mathfrak{a})=\mathcal{O}_r(\mathfrak{a})$ and \emph{principal} if there exists $\alpha\in\mathcal{O}$ such that $\mathfrak{a}=\mathcal{O}_{l}(\mathfrak{a}) \alpha=\alpha\mathcal{O}_r(\mathfrak{a})$. 

Two $\mathbb{Z}[1/\ell]$-ideals $\mathfrak{a},\mathfrak{b}\subseteq H$ are equivalent on the left (resp.\ on the right) if there exists $\alpha\in H^{\times}$ such that $\mathfrak{a}=\alpha\mathfrak{b}$ (resp. $\mathfrak{a}=\mathfrak{b}\alpha$).
It is immediate to see that if $\mathfrak{a}$ and $\mathfrak{b}$ are equivalent on the left, then they have the same right order $\mathcal{O}[1/\ell]$, and the set of classes of right ideals of $\mathcal{O}[1/\ell]$ is called the \emph{set of right ideal classes} of the order $\mathcal{O}[1/\ell]$. 
Analogously we define the \emph{set of left ideal classes} of $\mathcal{O}[1/\ell]$.
The set of left and right ideal classes are in bijection,
and its cardinality is the \emph{one-sided ideal class number} of $\mathcal{O}[1/\ell]$. Moreover this class number does not depend on the conjugacy class of $\mathcal{O}[1/\ell]$ inside $H$, and so it is denoted by $h(D,N)$. 
Note that the number $h(D,N)$ also depends on the base ring $\mathbb{Z}[1/\ell]$.
Actually, when $\mathcal{O}[1/\ell]$ satisfies Eichler's condition (i.e., when the algebra $H$ is not ramified at $\ell$) then, by the strong approximation theorem, this number coincides with the strict ideal class number of $\mathbb{Q}$ (cf. \cite{Eichler1938_crelle}).

Let $\mathfrak{a}\subseteq\mathcal{O}$ be an integral right-sided ideal (resp. left-sided) of $\mathcal{O}$, and let $(a):=\mathfrak{a}\cap\mathbb{Z}$. Then $\mathcal{O}/\mathfrak{a}$ is a finitely generated right-module (resp. left-module) over $\mathbb{Z}/a\mathbb{Z}$.
For $\alpha,\beta\in\mathcal{O}$, the quaternion $\alpha$ is said to be \emph{congruent to $\beta$ modulo $\mathfrak{a}$}, and one writes $\alpha\equiv\beta\ \mathrm{mod}\ \mathfrak{a}$, if $\alpha-\beta\in\mathfrak{a}$.
 In particular, if $\mathfrak{a}$ is a two-sided integral ideal of $\mathcal{O}$, then $\mathcal{O}/\mathfrak{a}$ is a (possibly) non-commutative ring, which is called the \emph{ring of quaternion classes modulo $\mathfrak{a}$}.

\begin{notation}
Let $\mathfrak{a}=\gamma\mathcal{O}$ be an integral ideal and consider the natural projection
$
\lambda_r:\mathcal{O}/\mathrm{Nm}(\gamma)\mathcal{O}\twoheadrightarrow \mathcal{O}/\gamma\mathcal{O}.
$
We denote by $(\mathcal{O}/\gamma\mathcal{O})_r^{\times}$ the image of $(\mathcal{O}/\mathrm{Nm}(\gamma)\mathcal{O})^\times$ under $\lambda_r$.
Analogously, if $\mathfrak{a}=\mathcal{O}\gamma$ we denote by $\lambda_\ell$ the associated projection and $(\mathcal{O}/\mathcal{O}\gamma)_\ell^{\times}:=\mathrm{Im}(\lambda_\ell)$.
\end{notation}

\begin{remark}\label{xi=n}
Note that when $\gamma=m\in\mathbb{Z}$, then $m\mathcal{O}=\mathcal{O}m$, and the set $(\mathcal{O}/m\mathcal{O})_r^{\times}=(\mathcal{O}/\mathcal{O}m)_\ell^{\times}$ coincides with the group of units $(\mathcal{O}/m\mathcal{O})^{\times}$ of the quotient ring $\mathcal{O}/m\mathcal{O}$, since the projection of rings $\lambda:\mathcal{O}/m^2\mathcal{O}\rightarrow \mathcal{O}/m\mathcal{O}$ restricts to a projection on the associated unit groups.

Let $\{1,\theta_{1},\theta_{2},\theta_{3}\}$ be a $\mathbb{Z}$-basis of $\mathcal{O}$, then
$\mathcal{O}/m\mathcal{O}=\{[\alpha]=a_{0}+a_{1}\theta_{1}+a_{2}\theta_{2}+a_{3}\theta_{3}\mid\, a_{i}\in\mathbb{Z}/m\mathbb{Z}\}$
is a free $\mathbb{Z}/m\mathbb{Z}$-module of rank 4. Its group of units is
$(\mathcal{O}/m\mathcal{O})^\times=
\{[\alpha]\in\mathcal{O}/m\mathcal{O}\mid(\mathrm{Nm}(\alpha),m)=1\}.$
\end{remark}

Let $\xi\in\mathcal{O}$. The group $\mathcal{O}^\times$ acts on $(\mathcal{O}/\xi\mathcal{O})_r^{\times}$:
take $\alpha+\xi\mathcal{O}\in(\mathcal{O}/\xi\mathcal{O})_r^\times$ and $u\in\mathcal{O}^\times$. Then we define
$(\alpha+\xi\mathcal{O})u:=\alpha u+\xi\mathcal{O}=\lambda_r(\alpha u+\mathrm{Nm}(\xi)\mathcal{O})\in(\mathcal{O}/\xi\mathcal{O})_r^{\times}.$
Two elements $\alpha+\xi\mathcal{O}$ and $\beta+\xi\mathcal{O}$ are equivalent if there exists $u\in\mathcal{O}^\times$ such that $(\alpha+\xi\mathcal{O})u=\beta+\xi\mathcal{O}$. We denote by $[\alpha]$ the class of $\alpha+\xi\mathcal{O}$.

The following definition generalises a very important notion in \cite{Hurwitz1896}. In what follows, we take $\mathcal{O}=\mathcal{O}_H(N)\subseteq H$ to be an Eichler order over $\mathbb{Z}$ of level $N$ with $h(D,N)=1$.

\begin{defn}
	Take a quaternion $\xi\in\mathcal{O}$, and let $\mathcal{P}=\{[\alpha_1],\ldots,[\alpha_r]\}\subseteq(\mathcal{O}/\xi\mathcal{O})_r^\times$ be a system of representatives of $(\mathcal{O}/\xi\mathcal{O})_r^\times$ for the right-multiplication action given by $\mathcal{O}^\times$. Suppose that:
	\begin{enumerate}[$(i)$]
		\item $\mathcal{O}^\times$ acts freely on $(\mathcal{O}/\xi\mathcal{O})_r^\times$ if $2\notin\xi\mathcal{O}$,
		\item $\mathcal{O}^\times/\mathbb{Z}^\times$ acts freely on $(\mathcal{O}/\xi\mathcal{O})_r^\times$ if $2\in\xi\mathcal{O}$.
	\end{enumerate}
	Then we call $\mathcal{P}$ a \textit{$\xi$-primary class set} for $\mathcal{O}$. A quaternion $\alpha$ that belongs to some class in $\mathcal{P}$ is called a \emph{$\xi$-primary} quaternion with respect to $\mathcal{P}$. 
\end{defn}

\begin{remark}\label{obs1}
	Assume that $\mathcal{O}$ admits a $\xi$-primary class set $\mathcal{P}=\{[\alpha_1],\ldots,[\alpha_r]\}$ and take $\alpha\in\mathcal{O}$ with $[\alpha]=\alpha+\xi\mathcal{O}\in(\mathcal{O}/\xi\mathcal{O})_r^\times$. Then there exists a unique $1\leq i\leq r$ and a unit $u\in\mathcal{O}^\times$ such that $[\alpha]u=[\alpha_i]\in\mathcal{P}$. If $2\notin\xi\mathcal{O}$, then $u$ is unique, otherwise $u$ is unique up to sign.
	In particular, if $\mathcal{P}=\{[1]\}\subseteq\mathcal{O}/\xi\mathcal{O}$, then $\alpha$ is $\xi$-primary with respect to $\{[1]\}$ if and only if $\alpha\equiv1\ \mathrm{mod}\ \xi\mathcal{O}$.
	
\end{remark}

The notion of $\xi$-primary quaternions turns out to be very important to have a unique factorisation in definite Eichler orders.
In \cite{Hurwitz1896} the author takes $\xi=2(1+i)$ and shows that the order of Hurwitz quaternions $\mathcal{O}$ admits the $\xi$-primary class set $\{[1],[1+2\rho]\}\subseteq(\mathcal{O}/2(1+i)\mathcal{O})^\times$. Using this, he proves a unique factorisation result or \emph{Zerlegungssatz} in this Eichler order. Our next goal is to extend this \emph{Zerlegungssatz} to all definite Eichler orders with $h(D,N)=1$. Moreover in the next section we will find a $\xi$-primary class set for most of these orders.

\begin{defn}\label{units_property}
	Take $\xi\in\mathcal{O}$. We say that $\xi$ satisfies the \textit{right-unit property} in $\mathcal{O}$ if:	
	\begin{enumerate}[$(i)$]
	\item the map $\varphi:\mathcal{O}^{\times}\rightarrow (\mathcal{O}/\xi\mathcal{O})_r^{\times},\  u \mapsto \lambda_r(u+\mathrm{Nm}(\xi)\mathcal{O})$,
	is a bijection if $2\notin\xi\mathcal{O}$.
	
	\item the map $\varphi:\mathcal{O}^{\times}/\mathbb{Z}^{\times}\rightarrow (\mathcal{O}/\xi\mathcal{O})_r^{\times},\  u \mapsto \lambda_r(u+\mathrm{Nm}(\xi)\mathcal{O})$,
	is a bijection if $2\in\xi\mathcal{O}$.
	\end{enumerate}
\end{defn}

\begin{remark}\label{xi2}
	In the second case we have that either $\xi\mathcal{O}=2\mathcal{O}$, or $\mathrm{Nm}(\xi)=2$ or $\mathrm{Nm}(\xi)=1$, so in particular the surjection $\lambda_r$ is either $\mathcal{O}/4\mathcal{O}\twoheadrightarrow \mathcal{O}/2\mathcal{O}$ or $\mathcal{O}/2\mathcal{O}\twoheadrightarrow\mathcal{O}/\xi\mathcal{O}$ or $\mathcal{O}\twoheadrightarrow \{1\}$.
\end{remark}

\begin{remark}\label{group_structure}
If $\xi\in\mathcal{O}$ satisfies the right-unit property in $\mathcal{O}$, then the bijection of the definition induces a multiplicative group structure on the set $(\mathcal{O}/\xi\mathcal{O})_r^{\times}=\mathrm{Im}(\lambda_r)$.
\end{remark}

When $\xi\mathcal{O}$ is a two-sided integral principal ideal, then $\lambda_r=\lambda_\ell$ is an epimorphism of rings and $(\mathcal{O}/\xi\mathcal{O})^\times$ is the unit group of the ring $\mathcal{O}/\xi\mathcal{O}$.

\begin{lemma}\label{units_mod}
	Let $\xi\mathcal{O}$ be an integral two-sided ideal such that $\varphi:\mathcal{O}^{\times}\rightarrow(\mathcal{O}/\xi\mathcal{O})^{\times}$ $\emph{(}$resp. $\mathcal{O}^\times/\mathbb{Z}^\times\rightarrow(\mathcal{O}/\xi\mathcal{O})^\times)$, $\varphi(u)=[u]$ is a well defined group isomorphism if $2\notin\xi\mathcal{O}$ $\emph{(}$resp. if $2\in\xi\mathcal{O})$.
	Then $(\mathcal{O}/\xi\mathcal{O})_r^\times=(\mathcal{O}/\xi\mathcal{O})^\times$ and $\xi$ satisfies the right-unit property in $\mathcal{O}$.
\end{lemma}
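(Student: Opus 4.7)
The plan is to split the conclusion into two steps. First I will establish the set-theoretic equality $(\mathcal{O}/\xi\mathcal{O})_r^{\times}=(\mathcal{O}/\xi\mathcal{O})^{\times}$; once this is in hand, the right-unit property is essentially tautological, because the map of Definition \ref{units_property} is by construction $u\mapsto\lambda_r(u+\mathrm{Nm}(\xi)\mathcal{O})=[u]$, which under the equality coincides with the bijection $\mathcal{O}^{\times}\to(\mathcal{O}/\xi\mathcal{O})^{\times}$ (resp.\ $\mathcal{O}^{\times}/\mathbb{Z}^{\times}\to(\mathcal{O}/\xi\mathcal{O})^{\times}$) supplied by the hypothesis.

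For the equality of the two unit sets I would argue by double inclusion. For $\supseteq$, given $[\alpha]\in(\mathcal{O}/\xi\mathcal{O})^{\times}$, the surjectivity of $\varphi$ in the hypothesis produces $u\in\mathcal{O}^{\times}$ with $[u]=[\alpha]$. Since $H$ is definite, every unit of $\mathcal{O}$ satisfies $\mathrm{Nm}(u)=\pm 1$, in particular $\gcd(\mathrm{Nm}(u),\mathrm{Nm}(\xi))=1$; by the characterisation recorded in Remark \ref{xi=n} applied to $m=\mathrm{Nm}(\xi)$ this gives $u+\mathrm{Nm}(\xi)\mathcal{O}\in(\mathcal{O}/\mathrm{Nm}(\xi)\mathcal{O})^{\times}$, so $[u]=\lambda_r(u+\mathrm{Nm}(\xi)\mathcal{O})\in(\mathcal{O}/\xi\mathcal{O})_r^{\times}$. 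For $\subseteq$, any class $[\alpha]\in(\mathcal{O}/\xi\mathcal{O})_r^{\times}$ is the image under $\lambda_r$ of a unit in $\mathcal{O}/\mathrm{Nm}(\xi)\mathcal{O}$, hence $\gcd(\mathrm{Nm}(\alpha),\mathrm{Nm}(\xi))=1$ and there exists $k\in\mathbb{Z}$ with $k\,\mathrm{Nm}(\alpha)\equiv 1\ \mathrm{mod}\ \mathrm{Nm}(\xi)$. Using $\mathrm{Nm}(\alpha)=\alpha\bar{\alpha}$ together with the inclusion $\mathrm{Nm}(\xi)\mathcal{O}\subseteq\xi\mathcal{O}$ (which holds because $\mathrm{Nm}(\xi)=\xi\bar{\xi}$ and $\bar{\xi}\in\mathcal{O}$), the congruence lifts to $[\alpha][k\bar{\alpha}]=[k\bar{\alpha}][\alpha]=[1]$ in the two-sided quotient ring $\mathcal{O}/\xi\mathcal{O}$, exhibiting $[k\bar{\alpha}]$ as a two-sided inverse and proving $[\alpha]\in(\mathcal{O}/\xi\mathcal{O})^{\times}$.

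I do not expect a genuine obstacle here; this is a bookkeeping lemma whose role is to translate the structural hypothesis on $\varphi$ into the form demanded by Definition \ref{units_property}. The only mild subtlety is the case $2\in\xi\mathcal{O}$: then $-1\equiv 1\ \mathrm{mod}\ \xi\mathcal{O}$, so one must quotient $\mathcal{O}^{\times}$ by $\mathbb{Z}^{\times}=\{\pm 1\}$ for $\varphi$ to be well defined and injective, but both directions of the inclusion argument above transfer verbatim after replacing $\mathcal{O}^{\times}$ by $\mathcal{O}^{\times}/\mathbb{Z}^{\times}$ in the source.
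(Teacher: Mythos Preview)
Your proof is correct and follows essentially the same approach as the paper: both arguments establish the inclusion $(\mathcal{O}/\xi\mathcal{O})^{\times}\subseteq(\mathcal{O}/\xi\mathcal{O})_r^{\times}$ by lifting a given unit class to an actual unit $u\in\mathcal{O}^{\times}$ via the surjectivity of $\varphi$, and then observing that $u+\mathrm{Nm}(\xi)\mathcal{O}\in(\mathcal{O}/\mathrm{Nm}(\xi)\mathcal{O})^{\times}$ so that $[u]=\lambda_r(u+\mathrm{Nm}(\xi)\mathcal{O})$ lies in $(\mathcal{O}/\xi\mathcal{O})_r^{\times}$. The only cosmetic difference is in the reverse inclusion: the paper dispatches $(\mathcal{O}/\xi\mathcal{O})_r^{\times}\subseteq(\mathcal{O}/\xi\mathcal{O})^{\times}$ in one line by noting that $\lambda_r$ is a ring homomorphism (since $\xi\mathcal{O}$ is two-sided) and hence preserves units, whereas you construct an explicit inverse $[k\bar{\alpha}]$ --- a valid but unnecessary elaboration of the same fact.
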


\begin{proof}
	If $\xi=m\in\mathbb{Z}$ the result is clear (cf. Remark \ref{xi=n}).
	Suppose that $\xi\notin\mathbb{Z}$ with $\mathrm{Nm}(\xi)=m$, and consider the natural projection $\lambda_r:\mathcal{O}/m\mathcal{O}\twoheadrightarrow\mathcal{O}/\xi\mathcal{O},\ \alpha+m\mathcal{O} \mapsto \alpha+\xi\mathcal{O}$ which restricts to the unit group, so $\mathrm{Im}(\lambda_r)=(\mathcal{O}/\xi\mathcal{O})_r^\times\subseteq(\mathcal{O}/\xi\mathcal{O})^\times$. We want this restriction to be still surjective. 
By assumption, if $2\notin\xi\mathcal{O}$ (resp. if $2\in\xi\mathcal{O}$) we have the group isomorphism
	$$\varphi:\mathcal{O}^{\times}\rightarrow(\mathcal{O}/\xi\mathcal{O})^\times\quad(\mbox{resp. }/\mathcal{O}^\times\mathbb{Z}^{\times}\rightarrow(\mathcal{O}/\xi\mathcal{O})^\times)\quad\varphi(u)=[u].$$
	Take $[\alpha]\in(\mathcal{O}/\xi\mathcal{O})^\times$. Then there exists $u\in\mathcal{O}^\times$ (resp. $\mathcal{O}^\times/\mathbb{Z}^\times$) such that $\varphi(u)=[u]=[\alpha]$. Moreover, since $\mathrm{Nm}(u)=1$, we have $u+m\mathcal{O}\in(\mathcal{O}/m\mathcal{O})^\times$ (cf. Remark \ref{xi=n}), and $\lambda_r(u+m\mathcal{O}):=u+\xi\mathcal{O}=\alpha+\xi\mathcal{O}=[\alpha]$, so $(\mathcal{O}/\xi\mathcal{O})_r^\times=(\mathcal{O}/\xi\mathcal{O})^\times$. Thus $\xi$-satisfies the right-unit property.
\end{proof}

\begin{lemma}\label{isom}
	Take $\xi\in\mathcal{O}$.
	Then $\mathcal{P}=\{[1]\}$ is a $\xi$-primary class set for $\mathcal{O}$ if and only if $\xi$ satisfies the right-unit property.
\end{lemma}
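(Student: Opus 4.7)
The plan is to unwind the definitions on both sides and observe that they describe the same orbit structure. Both say that the single $\mathcal{O}^{\times}$-orbit (or $(\mathcal{O}^\times/\mathbb{Z}^\times)$-orbit, when $2 \in \xi\mathcal{O}$) of the class $[1]$ exhausts $(\mathcal{O}/\xi\mathcal{O})_r^\times$ bijectively. The key observation is that, directly from the definition of the right action,
\[
\varphi(u) \;=\; \lambda_r\bigl(u + \mathrm{Nm}(\xi)\mathcal{O}\bigr) \;=\; u + \xi\mathcal{O} \;=\; [1]\cdot u,
\]
so $\varphi$ is nothing but the orbit map of the base point $[1]$. I will treat the two cases $2 \notin \xi\mathcal{O}$ and $2 \in \xi\mathcal{O}$ in parallel, replacing $\mathcal{O}^\times$ by $\mathcal{O}^\times/\mathbb{Z}^\times$ in the second case, which is legitimate since whenever $2 \in \xi\mathcal{O}$ one has $2\alpha \in \xi\mathcal{O}$ for every $\alpha \in \mathcal{O}$ and so $-1$ acts trivially (cf. Remark \ref{xi2}).

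For the implication $(\Rightarrow)$, assume that $\{[1]\}$ is a $\xi$-primary class set. Being a system of representatives means every $[\alpha] \in (\mathcal{O}/\xi\mathcal{O})_r^\times$ lies in the orbit of $[1]$, hence $[\alpha] = [1]\cdot u = \varphi(u)$ for some unit $u$, giving surjectivity of $\varphi$. For injectivity, if $\varphi(u) = \varphi(u')$ then $[1]\cdot u = [1]\cdot u'$, which rewrites as $[1]\cdot\bigl(u(u')^{-1}\bigr) = [1]$; freeness of the action at the point $[1]$ then forces $u(u')^{-1} = 1$ in $\mathcal{O}^\times$ (respectively in $\mathcal{O}^\times/\mathbb{Z}^\times$), so $\varphi$ is injective.

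For the converse $(\Leftarrow)$, assume $\varphi$ is bijective. Surjectivity gives $[\alpha] = [1]\cdot u$ for every $[\alpha]$, so all of $(\mathcal{O}/\xi\mathcal{O})_r^\times$ forms a single orbit and $\{[1]\}$ is a system of representatives. To verify freeness at an arbitrary class $[\alpha]$, pick $v \in \mathcal{O}^\times$ with $[\alpha] = \varphi(v) = [v]$ and write $\alpha = v + \xi\beta$ with $\beta \in \mathcal{O}$; then $\alpha u = vu + \xi\beta u \equiv vu \pmod{\xi\mathcal{O}}$, hence $[\alpha]\cdot u = [vu] = \varphi(vu)$. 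If $[\alpha]\cdot u = [\alpha]$, this becomes $\varphi(vu) = \varphi(v)$, and injectivity of $\varphi$ yields $vu = v$, i.e. $u = 1$ (respectively $u \in \mathbb{Z}^\times$), which is exactly freeness.

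There is no substantial obstacle: the statement is a bookkeeping dictionary between two reformulations of the same orbit-theoretic fact. The only care required is to run the $2\in\xi\mathcal{O}$ and $2\notin\xi\mathcal{O}$ cases in parallel and, in the former, to justify once that $\mathbb{Z}^\times$ acts trivially so that both $\varphi$ and the freeness hypothesis genuinely descend to $\mathcal{O}^\times/\mathbb{Z}^\times$.
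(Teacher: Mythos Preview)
Your proof is correct. Both your argument and the paper's rest on the same identification of $\varphi$ with the orbit map $u\mapsto [1]\cdot u$, but the executions differ: the paper dispatches the lemma in one line by counting, writing $\#\mathcal{P}=\#(\mathcal{O}/\xi\mathcal{O})_r^\times/\#\mathcal{O}^\times$ (respectively divided by $\#(\mathcal{O}^\times/\mathbb{Z}^\times)$) and observing that $\#\mathcal{P}=1$ iff the two cardinalities match; you instead verify injectivity and surjectivity of $\varphi$ directly and, in the converse, explicitly check freeness at every point by transporting the stabilizer computation back to $[1]$. Your route is longer but more self-contained: the paper's counting formula tacitly presupposes freeness, which in the $(\Leftarrow)$ direction is part of what must be shown, so the paper is implicitly relying on the conjugacy of stabilizers within an orbit, a step you make explicit.
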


\begin{proof}
	$\#\mathcal{P}=\#(\mathcal{O}/\xi\mathcal{O})_r^\times/\#\mathcal{O}^\times$ if $2\notin\xi\mathcal{O}$ and $\#\mathcal{P}=\#(\mathcal{O}/\xi\mathcal{O})_r^\times/\#(\mathcal{O}^\times/\mathbb{Z}^\times)$ otherwise.
\end{proof}

The right-unit property turns out to be a very powerful condition. We will be particularly interested in integral ideals $\xi\mathcal{O}$ such that $2\in\xi\mathcal{O}$, since these are the useful ones in section 3.
In this case we will find quaternions with the right-unit property in all definite Eichler orders over $\mathbb{Z}$ with $h(D,N)=1$ (except for two cases, for which no such $\xi$ exists).

In Table \ref{Table_xi} we list all definite Eichler orders $\mathcal{O}\subseteq H$ of level $N$ (up to conjugation) with $h(D,N)=1$ that have some $\xi\in\mathcal{O}$ satisfying the right-unit property with $2\in\xi\mathcal{O}$. This list coincides with that of all definite Eichler orders over $\mathbb{Z}$ with $h(D,N)=1$, except for the values $(D,N)=(2,5)$ and $(7,1)$ (cf. \cite[Table 8.2]{KirschmerVoight2010}).
To do this, we fix a basis for $H$ containing each order, an integral basis for each order, and then express $\xi$ on this basis. Although the element $\xi$ is not unique, the existence of $\xi$ does not depend on the integral basis chosen for $\mathcal{O}$.

\begin{table}[h!]
\captionsetup{font=footnotesize}
\centering
\vspace{-0.5em}
\scalebox{0.68}{
\begin{tabular}{|c|c|c|c|c|c|c|} \hline
	\multicolumn{1}{|c|}{$D$} & $H$ & $N$ & $\mathcal{O}$ & \multicolumn{1}{|c|}{$\#(\mathcal{O}^{\times}/\mathbb{Z}^{\times})$} & \multicolumn{1}{|c|}{$\xi$} & \multicolumn{1}{|c|}{$\mathrm{Nm}(\xi)$}\\ \hline \hline
	2 & $\left(\frac{-1,-1}{\mathbb{Q}}\right)$ & 1 & $\mathbb{Z}\left[1, i, j, \frac{1}{2}(1+i+j+k)\right]$ & 12 & $2$ & 4 \\ 
	& $\vphantom{\left(\frac{-1,-1}{\mathbb{Q}}\right)}$ & 3 & $\mathbb{Z}\left[ 1, 3i, -2i + j, \frac{1}{2}(1-i+j+k)\right]$ & 3 & $(-i+k)$ & 2 \\ 
	& $\vphantom{\left(\frac{-1,-1}{\mathbb{Q}}\right)}$ & 9 & $\mathbb{Z}\left[ 1, 9i, -4i + j, \frac{1}{2}(1-3i+j+k) \right]$ & 1 & $1$ & 1 \\ 
	& $\vphantom{\left(\frac{-1,-1}{\mathbb{Q}}\right)}$ & 11 & $\mathbb{Z}\left[ 1, 11i, -10i + j, \frac{1}{2}(1-3i+j+k)\right]$ & 1 & $1$ & 1 \\ \hline\hline
	3 & $\left(\frac{-1,-3}{\mathbb{Q}}\right)$ & 1 & $\mathbb{Z}\left[ 1, i, \frac{1}{2}(i+j), \frac{1}{2}(1+k) \right]$ & 6 & $2$ & 4 \\ 
	& $\vphantom{\left(\frac{-1,-1}{\mathbb{Q}}\right)}$ & 2 & $\mathbb{Z}\left[ 1, 2i, \frac{1}{2}(-i + j), \frac{1}{2}-i+\frac{1}{2}k\right]$ & 2 & $\frac{1}{2}(-1-i-j+k)$ & 2 \\ 
	& $\vphantom{\left(\frac{-1,-1}{\mathbb{Q}}\right)}$ & 4 & $\mathbb{Z}\left[ 1, 4i, \frac{1}{2}(-5i+j), \frac{1}{2} - 3i + \frac{1}{2}k \right]$ & 1 & 1 & 1 \\  \hline\hline
	5 & $\left(\frac{-2,-5}{\mathbb{Q}}\right)$& 1 & $\mathbb{Z}\left[ 1, \frac{1}{2}(1+i+j), j, \frac{1}{4}(2+i+k)\right]$ & 3 & $\frac{1}{2}(-1+i-j)$ & 2 \\
	& $\vphantom{\left(\frac{-1,-1}{\mathbb{Q}}\right)}$ & 2 & $\mathbb{Z}\left[ 1, 1 + i + j, \frac{1}{2}(-1-i+j), \frac{1}{4}(-i-2j+k) \right]$ & 1& 1 & 1 \\ \hline\hline
	13 & $\left(\frac{-2,-13}{\mathbb{Q}}\right)$ & 1 & $\mathbb{Z}\left[ 1, \frac{1}{2}(1+i+j), j, \frac{1}{4}(2+i+k) \right]$ & 1 & 1 & 1 \\  \hline
\end{tabular}
}
\caption{Definite Eichler orders $\mathcal{O}$ with an element $\xi\in\mathcal{O}$ satisfying the right-unit property.}\label{Table_xi}
\vspace{-1.5em}
\end{table}

\begin{defn}
	Let $\{1,\theta_1,\theta_2,\theta_3\}$ be an integral basis of $\mathcal{O}$. A quaternion $\alpha=a_0+a_1\theta_1+a_2\theta_2+a_3\theta_3\in\mathcal{O}$ is \emph{primitive} if the ideal generated by its coordinates, $(a_0,a_1,a_2,a_3)\subseteq\mathbb{Z}$, is the entire ring $\mathbb{Z}$. Note that this definition does not depend on the integral basis chosen.
	A nonzero quaternion $\pi\in\mathcal{O}\smallsetminus\mathcal{O}^\times$ is \emph{irreducible} if, whenever $\pi=\alpha\beta$, then either $\alpha\in\mathcal{O}^\times$ or $\beta\in\mathcal{O}^\times$.
\end{defn}


\begin{lemma}\label{primes}
	Let $\mathcal{O}\subseteq H$ be an  Eichler order of level $N$ with $h(D,N)=1$. A primitive quaternion $\pi\in\mathcal{O}$ is irreducible if and only if its norm is a prime integer.
\end{lemma}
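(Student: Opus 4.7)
My plan is to split the statement into its two directions, with the easy one serving as a warm-up and the harder one using the class-number-one hypothesis through a principal-ideal trick.

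For the easy direction ($\Leftarrow$), I would assume $\mathrm{Nm}(\pi)$ is a prime integer $q$ and consider any factorisation $\pi = \alpha\beta$ in $\mathcal{O}$. Since $H$ is definite the reduced norm is multiplicative and takes positive integer values on $\mathcal{O}$, so $\mathrm{Nm}(\alpha)\mathrm{Nm}(\beta)=q$ forces one of the factors to have norm $1$, hence to be a unit. Primitivity is not needed here and no appeal to $h(D,N)=1$ is required.

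For the harder direction ($\Rightarrow$), I would argue by contradiction: assume $\pi$ is primitive and irreducible but that $n:=\mathrm{Nm}(\pi)$ is composite, and pick any prime $p\mid n$. The key step is to consider the left $\mathcal{O}$-ideal $\mathfrak{a}:=\mathcal{O}\pi+\mathcal{O}p$. Since $h(D,N)=1$, every left ideal of $\mathcal{O}$ is principal, so $\mathfrak{a}=\mathcal{O}\alpha$ for some $\alpha\in\mathcal{O}$. From $\pi,p\in\mathcal{O}\alpha$ we get $\pi=\beta\alpha$ and $p=\gamma\alpha$, and the second equation gives $p^{2}=\mathrm{Nm}(\gamma)\mathrm{Nm}(\alpha)$, so $\mathrm{Nm}(\alpha)\in\{1,p,p^{2}\}$. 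The plan is then to rule out the two extreme cases using primitivity, and to note that the middle case contradicts irreducibility.

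More precisely, if $\mathrm{Nm}(\alpha)=p^{2}$ then $\gamma$ is a unit and $\mathcal{O}\alpha=\mathcal{O}p=p\mathcal{O}$ (using that $p$ is central), hence $\pi\in p\mathcal{O}$, contradicting primitivity. If $\mathrm{Nm}(\alpha)=1$ then $\alpha$ is a unit and $\mathfrak{a}=\mathcal{O}$, so we may write $1=x\pi+yp$ with $x,y\in\mathcal{O}$; multiplying on the right by $\bar\pi$ yields $\bar\pi=xn+yp\bar\pi\in p\mathcal{O}$, and applying the conjugation anti-involution we obtain $\pi\in p\mathcal{O}$, again contradicting primitivity. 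The remaining case $\mathrm{Nm}(\alpha)=p$ forces $\mathrm{Nm}(\beta)=n/p\geq 2$ (since $n$ is composite and $p$ is a prime divisor), so neither $\alpha$ nor $\beta$ is a unit and $\pi=\beta\alpha$ is a non-trivial factorisation, contradicting irreducibility.

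The only delicate point, and the one I expect to require the most care to phrase cleanly, is verifying that $\mathcal{O}\pi+\mathcal{O}p$ is really a left $\mathcal{O}$-ideal of the right kind to apply $h(D,N)=1$ (one must know that its right order is still in the genus of $\mathcal{O}$), and ensuring that the norm computation $\mathrm{Nm}(\alpha)\mid p^{2}$ really exhausts the possibilities. Once that is set up, the three-case dichotomy driven by primitivity closes the argument.
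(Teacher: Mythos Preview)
Your proposal is correct and follows essentially the same strategy as the paper's proof: form the one-sided ideal generated by $p$ and $\pi$, invoke $h(D,N)=1$ to make it principal, and run a three-way case analysis on the norm of the generator using primitivity and irreducibility. The only cosmetic differences are that the paper uses the right ideal $p\mathcal{O}+\pi\mathcal{O}$ and argues directly (deducing $\mathrm{Nm}(\pi)\mid p^{2}$) rather than by contradiction, and disposes of your $\mathrm{Nm}(\alpha)=1$ case via the observation that every element of $p\mathcal{O}+\pi\mathcal{O}$ has norm divisible by $p$ rather than your B\'ezout-and-conjugation trick.
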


\begin{proof}
If $\mathrm{Nm}(\pi)$ is prime then it is obvious that $\pi$ has to be irreducible. Reciprocally, let us assume the $\pi\in\mathcal{O}$ is irreducible and that $p\mid\mathrm{Nm}(\pi)$ is a prime factor of the norm. We are going to show that, when $\pi$ is primitive, we have strict inclusions
$
p\mathcal{O}\subsetneq p\mathcal{O}+\pi\mathcal{O}\subsetneq \mathcal{O}.
$
If the first inclusion were an equality, then $p\mid \pi$ and, since $\pi$ is irreducible, $p\in\mathcal{O}^\times$ or $\pi\mathcal{O}=p\mathcal{O}$. Both options lead to a contradiction, since $p$ is prime (of norm $\mathrm{Nm}(p)=p^{2}$) and $\pi$ is assumed to be primitive. If the second inclusion were an equality then it is an easy computation to see that $p\mid\mathrm{Nm}(x)$, for every $x\in p\mathcal{O}+\pi\mathcal{O}=\mathcal{O}$, a contradiction. Since the order $\mathcal{O}$ has one-sided ideal class number equal to 1, then $p\mathcal{O}+\pi\mathcal{O}=\alpha\mathcal{O}$, for some $\alpha\in\mathcal{O}\smallsetminus\mathcal{O}^\times$ and then $\pi=\alpha\beta$ for some $\beta\in\mathcal{O}$. Moreover $\beta\in\mathcal{O}^\times$ since $\pi$ is irreducible. Hence $p\in p\mathcal{O}+\pi\mathcal{O}=\alpha\mathcal{O}=\pi\mathcal{O}$, i.e., $\pi\mid p$ and $\mathrm{Nm}(\pi)\mid \mathrm{Nm}(p)=p^{2}$. Since $\pi$ is irreducible its norm $\mathrm{Nm}(\pi)$ cannot be equal to $1$ and, since $\pi$ is primitive, it cannot be $\mathrm{Nm}(\pi)=p^{2}$ either. Actually $\mathrm{Nm}(\pi)=p^{2}$ implies $p=\pi\varepsilon$ for a unit $\varepsilon\in\mathcal{O}^\times$ and so $\pi=p\varepsilon^{-1}$. Finally the only possible option is $\mathrm{Nm}(\pi)=p$.
\end{proof}

\begin{lemma}\label{auxiliar}
	Let $\mathcal{O}$ be an Eichler order with $h(D,N)=1$ and take $\alpha,\beta,\gamma\in\mathcal{O}$, where $\mathrm{Nm}(\beta)=p$ is prime. If $p\mid\alpha\beta\gamma$ and $p\nmid\alpha\beta$, then $p\mid\beta\gamma$.
\end{lemma}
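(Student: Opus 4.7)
My plan is to recast the divisibility hypotheses as left-ideal containments involving the conjugate $\bar\beta$, and then exploit $h(D,N)=1$ to produce a Bezout-type identity that can be multiplied on the right by $\beta\gamma$.

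First I would exploit that $\beta\bar\beta=\bar\beta\beta=\mathrm{Nm}(\beta)=p$, with $\bar\beta=\mathrm{Tr}(\beta)-\beta\in\mathcal{O}$ and $\beta^{-1}=\bar\beta/p\in H^\times$, in order to verify the equivalence
$$
\eta\beta\in p\mathcal{O}\ \Longleftrightarrow\ \eta\in\mathcal{O}\bar\beta,
$$
for every $\eta\in\mathcal{O}$ (the nontrivial direction is simply $\eta\beta=p\delta\Rightarrow \eta=p\delta\beta^{-1}=\delta\bar\beta$). In particular the hypothesis $p\nmid\alpha\beta$ translates to $\alpha\notin\mathcal{O}\bar\beta$, which is the form in which it will be used.

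Next I would consider the integral left $\mathcal{O}$-ideal $I:=\mathcal{O}\alpha+\mathcal{O}\bar\beta\subseteq\mathcal{O}$. Since $h(D,N)=1$, the same argument used in the proof of Lemma~\ref{primes} gives that $I$ is principal: $I=\mathcal{O}\xi$ for some $\xi\in\mathcal{O}$. From $\bar\beta\in\mathcal{O}\xi$, writing $\bar\beta=y\xi$ and taking reduced norms forces $\mathrm{Nm}(\xi)\in\{1,p\}$. If $\mathrm{Nm}(\xi)=p$ then $\mathrm{Nm}(y)=1$, so $y\in\mathcal{O}^\times$ and $\mathcal{O}\xi=\mathcal{O}\bar\beta$; this would give $\alpha\in\mathcal{O}\bar\beta$, contradicting the translation of the hypothesis obtained above. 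Hence $\mathrm{Nm}(\xi)=1$, $\xi\in\mathcal{O}^\times$, and $\mathcal{O}\alpha+\mathcal{O}\bar\beta=\mathcal{O}$; in particular there exist $r,s\in\mathcal{O}$ with $r\alpha+s\bar\beta=1$.

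Finally I would multiply this identity on the right by $\beta\gamma$ and simplify using $\bar\beta\beta=p$:
$$
\beta\gamma \;=\; r(\alpha\beta\gamma) + s(\bar\beta\beta)\gamma \;=\; r(\alpha\beta\gamma) + p(s\gamma).
$$
The first summand lies in $p\mathcal{O}$ because $\alpha\beta\gamma\in p\mathcal{O}$ by hypothesis and $p$ is central, and the second lies in $p\mathcal{O}$ trivially. Thus $\beta\gamma\in p\mathcal{O}$, i.e.\ $p\mid\beta\gamma$.

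The only mildly subtle point, and the step I would foreground as the real content, is recognising that one must work with the \emph{left} ideal generated by $\alpha$ and the \emph{conjugate} $\bar\beta$, rather than with the naive right ideal $\alpha\beta\mathcal{O}+p\mathcal{O}$: only with this choice does the Bezout-type identity supplied by $h(D,N)=1$ admit a right multiplication by $\beta\gamma$ that uses the hypothesis $p\mid\alpha\beta\gamma$ to deliver the conclusion. Everything else reduces to the identity $\bar\beta\beta=p$ and a comparison of reduced norms.
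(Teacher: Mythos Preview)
Your proof is correct, but it takes a route that is dual to the paper's. The paper considers the \emph{right} ideal $\overline{\beta}\mathcal{O}+\gamma\mathcal{O}=\delta\mathcal{O}$ and argues by contradiction that $\mathrm{Nm}(\delta)=p$ rather than $1$: if $\mathrm{Nm}(\delta)=1$ one gets a Bezout relation $1=\overline{\beta}x+\gamma y$, and multiplying on the \emph{left} by $\alpha\beta$ yields $\alpha\beta=p\alpha x+(\alpha\beta\gamma)y\in p\mathcal{O}$, contradicting $p\nmid\alpha\beta$; hence $\delta\mathcal{O}=\overline{\beta}\mathcal{O}$, so $\gamma=\overline{\beta}z$ and $\beta\gamma=pz$. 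You instead form the \emph{left} ideal $\mathcal{O}\alpha+\mathcal{O}\overline{\beta}$, show that the case $\mathrm{Nm}(\xi)=p$ is the contradictory one, and in the surviving unit case obtain a Bezout identity that you multiply on the \emph{right} by $\beta\gamma$. So the two arguments swap left/right, swap which norm case is ruled out, and swap which of $\alpha$ and $\gamma$ appears alongside $\overline{\beta}$ in the auxiliary ideal; otherwise they are of identical length and difficulty, and your preliminary equivalence $\eta\beta\in p\mathcal{O}\Leftrightarrow\eta\in\mathcal{O}\overline{\beta}$ is a clean way to phrase the translation of $p\nmid\alpha\beta$ that the paper leaves implicit.
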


\begin{proof}
	Consider the ideal $\overline{\beta}\mathcal{O}+\gamma\mathcal{O}=\delta\mathcal{O}$, which is principal by assumption. Then $\mathrm{Nm}(\delta)\mid\mathrm{Nm}(\overline{\beta})=p$, which gives $\mathrm{Nm}(\delta)=1$ or $p$. If $\mathrm{Nm}(\delta)=1$ then $\delta\in\mathcal{O}^\times$, so there exist $x,y\in\mathcal{O}$ such that $1=\overline{\beta}x+\gamma y$. This implies that $\alpha\beta=\alpha\beta(\overline{\beta}x+\gamma y)=\alpha\beta\overline{\beta}x+\alpha\beta\gamma y=\alpha px+\alpha\beta\gamma y$, so $p\mid\alpha\beta$, which is a contradiction. Thus $\delta=\overline{\beta}\varepsilon$, for some $\varepsilon\in\mathcal{O}^\times$. Since $\delta\mathcal{O}=\overline{\beta}\mathcal{O}+\gamma\mathcal{O}$, this gives that $\gamma=\overline{\beta}z$, for some $z\in\mathcal{O}$. Finally, we obtain that $p\mid\beta\overline{\beta}z=\beta\gamma$.
\end{proof}

\begin{teo}[Zerlegungssatz]\label{Zerlegungssatz}
Let $H$ be a definite quaternion algebra of discriminant $D$ and let $\mathcal{O}$ be an Eichler order over $\mathbb{Z}$ of level $N$ with $h(D,N)=1$. Let $\xi\in\mathcal{O}$ be an integral quaternion such that $\mathcal{O}/\xi\mathcal{O}$ contains a $\xi$-primary class set $\mathcal{P}$. 
Take $\alpha\in\mathcal{O}$ a primitive and $\xi$-primary quaternion with respect to $\mathcal{P}$ such that its norm has a decomposition in prime factors
$
\mathrm{Nm}(\alpha)=p_{1}\cdot\,\dots\,\cdot p_{s}.
$
Then $\alpha$ admits a decomposition in primitive irreducible and $\xi$-primary quaternions with respect to $\mathcal{P}$:
$
\alpha=\pi_{1}\cdot\,\dots\,\cdot \pi_{s}
$
with $\mathrm{Nm}(\pi_{i})=p_{i}$, for every $1\leq i \leq s$. Moreover, if $2\notin\xi\mathcal{O}$ this decomposition is unique, and if $2\in\xi\mathcal{O}$, the decomposition is unique up to sign.
\end{teo}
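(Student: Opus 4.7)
The proof naturally splits into an existence part and a uniqueness part, both handled by induction on $s$, and both relying crucially on the hypothesis $h(D,N)=1$ to guarantee that every one-sided integral ideal of $\mathcal{O}$ is principal.

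For existence, the base case $s=1$ is immediate: $\alpha$ itself is the irreducible, primitive, and $\xi$-primary factor, by Lemma \ref{primes} together with the hypotheses. For the inductive step, the plan is to first extract an irreducible left factor of norm $p_1$ by considering the right ideal $p_1\mathcal{O}+\alpha\mathcal{O}$, which is principal since $h(D,N)=1$ and which, by an argument parallel to that in the proof of Lemma \ref{primes} combined with the primitivity of $\alpha$ (which rules out $\alpha\in p_1\mathcal{O}$), is generated by an irreducible quaternion $\tau\in\mathcal{O}$ of norm $p_1$. Writing $\alpha=\tau\beta$, the factor $\beta$ is automatically primitive of norm $p_{2}\cdots p_{s}$. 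Next I would adjust $\tau$ by a unit: since $[\alpha]\in\mathcal{P}\subseteq(\mathcal{O}/\xi\mathcal{O})_r^\times$ and $\alpha=\tau\beta$ force $[\tau]\in(\mathcal{O}/\xi\mathcal{O})_r^\times$, Remark \ref{obs1} supplies a unit $u\in\mathcal{O}^\times$ (unique up to sign when $2\in\xi\mathcal{O}$) such that $\pi_1:=\tau u$ is $\xi$-primary, and setting $\beta':=u^{-1}\beta$ yields $\alpha=\pi_1\beta'$ with $\pi_1$ of the required form and $\beta'$ primitive of norm $p_{2}\cdots p_{s}$. The crucial remaining step is to verify that $\beta'$ is itself $\xi$-primary, so that the induction hypothesis applies.

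To establish the primariness of $\beta'$, I would use that $\pi_1-\alpha_j\in\xi\mathcal{O}$, where $[\pi_1]=[\alpha_j]\in\mathcal{P}$: writing $\pi_1=\alpha_j+\xi y$ and expanding $\alpha=\pi_1\beta'=\alpha_j\beta'+\xi(y\beta')$, one obtains $\alpha\equiv\alpha_j\beta'\ \mathrm{mod}\ \xi\mathcal{O}$. In the case $\mathcal{P}=\{[1]\}$ (the case of interest covering all the examples in Table \ref{Table_xi} by Lemma \ref{isom}), this specializes to $\beta'\equiv\alpha\equiv 1\ \mathrm{mod}\ \xi\mathcal{O}$, so $\beta'$ is $\xi$-primary. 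For a general $\xi$-primary class set one uses the multiplicative structure on $(\mathcal{O}/\xi\mathcal{O})_r^\times$ supplied by Remark \ref{group_structure} to invert $[\alpha_j]$ and conclude $[\beta']\in\mathcal{P}$.

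For uniqueness, assume $\alpha=\pi_1\cdots\pi_s=\sigma_1\cdots\sigma_s$ with both factorizations satisfying the hypotheses. The same ideal argument as above, together with Lemma \ref{auxiliar}, gives $\pi_1\mathcal{O}=p_1\mathcal{O}+\alpha\mathcal{O}=\sigma_1\mathcal{O}$, hence $\sigma_1=\pi_1 u$ for some unit $u\in\mathcal{O}^\times$. Since $[\pi_1]$ and $[\sigma_1]=[\pi_1]u$ both lie in $\mathcal{P}$ and $\mathcal{P}$ meets each $\mathcal{O}^\times$-orbit in a single element on which the action is free (respectively, free modulo $\pm 1$ when $2\in\xi\mathcal{O}$), one concludes $u=1$ if $2\notin\xi\mathcal{O}$ and $u=\pm 1$ otherwise. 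Cancelling $\sigma_1=\pm\pi_1$ from the left in $H$ (which has no zero divisors) and absorbing any sign into the next factor (whose $\xi$-primariness is preserved because $\pm\gamma\equiv\gamma\ \mathrm{mod}\ \xi\mathcal{O}$ whenever $2\in\xi\mathcal{O}$), the induction hypothesis applied to the shorter product completes the argument. The main obstacle in the entire proof is the primariness of $\beta'$ in the existence step: this is where the specific structure of $\mathcal{P}$ and the fact that $\xi\mathcal{O}$ is a right ideal intervene decisively, forcing us to treat the case $\mathcal{P}=\{[1]\}$ separately from the general one.
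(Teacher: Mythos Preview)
Your existence argument mirrors the paper's---extract a norm-$p_1$ left factor via the principal ideal $p_1\mathcal{O}+\alpha\mathcal{O}$, normalise it by a unit to make it $\xi$-primary, then iterate---and you are right to flag the $\xi$-primariness of the cofactor $\beta'$ as the delicate point (the paper simply iterates without addressing it). Your argument for $\mathcal{P}=\{[1]\}$ is correct. But for a general $\xi$-primary class set your appeal to Remark~\ref{group_structure} does not go through: that remark endows $(\mathcal{O}/\xi\mathcal{O})_r^\times$ with a multiplicative group structure only when $\xi$ satisfies the right-unit property, which by Lemma~\ref{isom} is precisely the case $\mathcal{P}=\{[1]\}$. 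So you cannot ``invert $[\alpha_j]$'' when $\#\mathcal{P}>1$, and the primariness of $\beta'$ remains unproved in that generality.

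For uniqueness you take a genuinely different route from the paper. The paper peels off the \emph{last} factor: multiplying $\pi_1\cdots\pi_s=\sigma_1\cdots\sigma_s$ on the right by $\overline{\pi}_s$ and applying Lemma~\ref{auxiliar} yields $\sigma_s=\varepsilon\pi_s$ with the unit $\varepsilon$ on the \emph{left}. You instead peel off the \emph{first} factor via the ideal equality $\pi_1\mathcal{O}=p_1\mathcal{O}+\alpha\mathcal{O}=\sigma_1\mathcal{O}$, obtaining $\sigma_1=\pi_1 u$ with the unit on the \emph{right}. This meshes more directly with the definition of $\xi$-primary, which is phrased through the right $\mathcal{O}^\times$-action on $(\mathcal{O}/\xi\mathcal{O})_r^\times$: freeness of that action immediately forces $u=1$ (respectively $u=\pm 1$). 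Note that Lemma~\ref{auxiliar} is not actually needed in your left-hand approach; the equality $\sigma_1\mathcal{O}=p_1\mathcal{O}+\alpha\mathcal{O}$ follows from $\sigma_1\mid\alpha$, $\sigma_1\mid p_1=\sigma_1\overline{\sigma}_1$, and a norm comparison.
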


\begin{proof}
First consider the integral right ideal $p_{1}\mathcal{O}+\alpha\mathcal{O}\subseteq\mathcal{O}$ of $\mathcal{O}$, which is principal by assumption, so $p_{1}\mathcal{O}+\alpha\mathcal{O}=\pi_{1}\mathcal{O}$, for some $\pi_{1}\in \mathcal{O}$ uniquely determined up to right multiplication by a unit. Let us see that $\pi_1$ is then irreducible and $\mathrm{Nm}(\pi_1)=p_1$. Since $p_1\mathcal{O}\subseteq\pi_1\mathcal{O}$, we have that $\pi_1\mid p_1$, and then $\mathrm{Nm}(\pi_{1})\mid \mathrm{Nm}(p_{1})=p_{1}^{2}$. If $\mathrm{Nm}(\pi_{1})=1$, then $p_{1}\mathcal{O}+\alpha\mathcal{O}=\mathcal{O}$, and since every $x\in p_{1}\mathcal{O}+\alpha\mathcal{O}$ has norm such that $p_1\mid\mathrm{Nm}(x)$ one obtains a contradiction. If $\mathrm{Nm}(\pi_{1})=p_{1}^{2}=\mathrm{Nm}(p_1)$ then, since we also have $\pi_1\mid p_1$, we conclude that $\pi_1=p_1\varepsilon$, for some $\varepsilon\in\mathcal{O}^\times$. But then $p_{1}\mathcal{O}+\alpha\mathcal{O}=p_{1}\mathcal{O}$, so $p_{1}\mid \alpha$, which contradicts the primitivity of $\alpha$. Thus we have shown that $\mathrm{Nm}(\pi_{1})=p_{1}$, so it is primitive, and then we know by Lemma \ref{primes} that $\pi_{1}$ is irreducible.
Moreover, since by assumption we have a $\xi$-primary class set $\mathcal{P}\subseteq\mathcal{O}/\xi\mathcal{O}$, there exists a unit $\varepsilon_1\in\mathcal{O}^\times$ such that $\pi_1\varepsilon_1$ is $\xi$-primary, and this unit is unique if $2\notin\xi\mathcal{O}$ and is unique up to sign if $2\in\xi\mathcal{O}$.
Thus we obtain a decomposition $\alpha=\pi_1 \varepsilon_1\varepsilon_1^{-1}\alpha_1$, for some $\alpha_{1}\in\mathcal{O}$, where $\pi_1\varepsilon_1$ is primitive, $\xi$-primary and irreducible. Moreover
$\varepsilon_1^{-1}\alpha_1$ is also a primitive quaternion.

Next we apply the same argument to $\varepsilon_1^{-1}\alpha_{1}\in\mathcal{O}$, whose norm is $\mathrm{Nm}(\varepsilon_1^{-1}\alpha_{1})=p_{2}\dots p_{s}$, and we obtain $\varepsilon_1^{-1}\alpha_1=\pi_2\varepsilon_2\varepsilon_2^{-1}\alpha_2$ with $\pi_2\varepsilon_2$ primitive, $\xi$-primary and irreducible. Iterating the process we find primitive and irreducible quaternions $\pi_{1}\varepsilon_1,\dots,\pi_s\varepsilon_s$ in $\mathcal{O}$ with $\mathrm{Nm}(\pi_{i})=p_{i}$, such that $\alpha=\pi_1\varepsilon_1\ldots\pi_s\varepsilon_s$, and these determine a factorisation 
$\alpha=\pi_{1}\varepsilon_1\cdot\ldots\cdot\pi_{s}\varepsilon_s,$
where $\pi_1\varepsilon_1,\ldots,\pi_s\varepsilon_s$ are primitive and $\xi$-primary quaternions with $\mathrm{Nm}(\pi_i)=p_i$, and every $\varepsilon_i\in\mathcal{O}^{\times}$ in the decomposition is uniquely determined by each $\pi_i$ (up to sign if $2\in\xi\mathcal{O}$).

Finally suppose that we have two factorisations in primitive $\xi$-primary irreducible elements:
$$\alpha=\pi_1\cdot\ldots\cdot\pi_s=\sigma_1\cdot\ldots\cdot\sigma_s,\quad\mbox{for }s\geq2,$$
with $\mathrm{Nm}(\pi_i)=\mathrm{Nm}(\sigma_i)=p_i$, for $1\leq i\leq s$. Then $\pi_1\ldots\pi_s\overline{\pi}_s=\sigma_1\ldots\sigma_s\overline{\pi}_s$, so $p_s\mid\sigma_1\ldots\sigma_s\overline{\pi}_s=(\sigma_1\ldots\sigma_{s-1})\sigma_s\overline{\pi}_s$. Since $\alpha$ is primitive, $p_s\nmid(\sigma_1\ldots\sigma_{s-1})\sigma_s$. Thus, by Lemma \ref{auxiliar}, we have that $p_s\mid\sigma_s\overline{\pi}_s$. Now $\sigma_s\overline{\pi}_s=\varepsilon p_s=\varepsilon\pi_s\overline{\pi}_s$, for some $\varepsilon\in\mathcal{O}$, so $\sigma_s=\varepsilon\pi_s$ and $\varepsilon$ is a unit. Since both $\pi_s$ and $\sigma_s$ are primitive and $\xi$-primary, we obtain that $\varepsilon=1$ if $2\notin\xi\mathcal{O}$, and $\varepsilon=\pm1$ otherwise.
\end{proof}

\section{A method for finding Mumford curves covering Shimura curves}\label{Sec3}

In this section we state and prove the main result of this article (Theorem \ref{Theorem_Schottky_group}). This gives a specifit way to find generators for certain Schottky groups which are subgroups of the $p$-adic quaternion groups uniformising $p$-adic Shimura curves. In particular, we will be able to construct good fundamental domains for the associated Mumford curves and, consequently, to obtain the stable reduction-graphs of these curves (Theorem \ref{Theorem_Mumford_curve}). Finally, we will see that these  Mumford curves are arithmetic coverings of certain families of $p$-adic Shimura curves (Corollary \ref{Corollary}).

\subsection{Systems of generators for arithmetic Schottky groups}

Let $H$ be a definite quaternion algebra over $\mathbb{Q}$ of discriminant $D$ and $p\nmid D$ an odd prime. Let $\mathcal{O}=\mathcal{O}_H(N)\subseteq H$ be an Eichler order of level $N$ coprime to $p$. Denote by $\mathcal{O}[1/p]:=\mathcal{O}\otimes_{\mathbb{Z}}\mathbb{Z}[1/p]$ the corresponding Eichler order over $\mathbb{Z}[1/p]$, which is unique up to conjugation, and by $\mathcal{O}[1/p]^{\times}$ its group of units, which is the group of quaternions in $\mathcal{O}[1/p]$ with norm equal to some integral power of $p$. Finally denote by $\mathcal{O}[1/p]^{\times}_{+}$ the corresponding group of \emph{positive} units, i.e.,
\[
\mathcal{O}[1/p]^{\times}_{+}:=\{\alpha\in\mathcal{O}[1/p]^{\times}\mid v_{p}(\mathrm{Nm}(\alpha))\equiv 0\;\mathrm{mod}\;2\}.
\]
This is an index 2 subgroup of $\mathcal{O}[1/p]^{\times}$. A system of representatives for $\mathcal{O}[1/p]^{\times}_{+}/\mathbb{Z}[1/p]^{\times}$ is given by the group of elements $\alpha\in\mathcal{O}[1/p]^{\times}$ with norm $1$, in analogy to the arquimedean case.

In the following result we apply the \emph{Zerlegungssatz}, valid for Eichler orders $\mathcal{O}$ with $h(D,N)=1$ and with a $\xi$-primary class set, in order to give a unique factorisation in the group $\mathcal{O}[1/p]^\times$.

\begin{teo}\label{Dec}
Assume that $\mathcal{O}$ has $h(D,N)=1$ and let $\xi\in\mathcal{O}$ be an integral quaternion such that $\mathcal{O}/\xi\mathcal{O}$ contains a $\xi$-primary class set $\mathcal{P}$. Then every $\alpha\in\mathcal{O}[1/p]^{\times}$ can be decomposed as
$$
\alpha=p^{n}\cdot\prod_{i=1}^{r}\beta_{i}\cdot\varepsilon, 
$$
for a unique $\varepsilon\in\mathcal{O}^{\times}$ $($up to sign if $2\in\xi\mathcal{O})$, unique $n\in\mathbb{Z}$ and for unique $\beta_{1},\dots,\beta_{r}\in\mathcal{O}$ primitive $\xi$-primary quaternions with respect to $\mathcal{P}$ with $\mathrm{Nm}(\beta_{i})=p$, for every $1\leq i\leq r$, and such that no factor of the form $\beta_i\cdot\beta_{i+1} = p$ appears in the product.
\end{teo}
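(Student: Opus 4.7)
The plan is to reduce the statement for $\mathcal{O}[1/p]^\times$ to the situation handled by the Zerlegungssatz (Theorem \ref{Zerlegungssatz}) applied to a primitive, $\xi$-primary element of $\mathcal{O}$. Given $\alpha\in\mathcal{O}[1/p]^\times$, its reduced norm is a (positive) integer power of $p$, so I would first clear denominators and extract the maximal central $p$-factor to write $\alpha=p^n\alpha_0$ with $n\in\mathbb{Z}$, $\alpha_0\in\mathcal{O}$, and $\alpha_0\notin p\mathcal{O}$. Because $\mathrm{Nm}(\alpha_0)$ is a pure $p$-power, any prime $\ell\neq p$ cannot divide $\alpha_0$ (its square would divide the norm), so $\alpha_0$ is automatically \emph{primitive} in the sense of Section~\ref{Sec2}. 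The pair $(n,\alpha_0)$ is uniquely determined by $\alpha$: if $p^n\alpha_0=p^{n'}\alpha_0'$ with both $\alpha_0,\alpha_0'$ primitive, then any discrepancy $n\neq n'$ would force $p$ to divide the primitive element on one side.

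Next, since $p$ is odd and $\mathrm{Nm}(\xi)\in\{1,2,4\}$ by Remark \ref{xi2}, we have $\gcd(p,\mathrm{Nm}(\xi))=1$, so the class $\alpha_0+\mathrm{Nm}(\xi)\mathcal{O}$ lies in $(\mathcal{O}/\mathrm{Nm}(\xi)\mathcal{O})^\times$ and its image under $\lambda_r$ lies in $(\mathcal{O}/\xi\mathcal{O})_r^\times$. Since $\mathcal{P}$ is a $\xi$-primary class set, there is a unit $u\in\mathcal{O}^\times$, unique (up to $\mathbb{Z}^\times=\{\pm 1\}$ when $2\in\xi\mathcal{O}$), such that $\alpha_0 u$ is $\xi$-primary with respect to $\mathcal{P}$. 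The element $\alpha_0 u\in\mathcal{O}$ is still primitive (right-multiplication by a unit preserves primitivity), has norm $p^r$ with $r=v_p(\mathrm{Nm}(\alpha_0))$, and is $\xi$-primary, so the Zerlegungssatz applies and furnishes a unique factorisation $\alpha_0 u=\beta_1\cdots\beta_r$ into primitive, $\xi$-primary, irreducible quaternions of norm $p$ (with the usual sign ambiguity if $2\in\xi\mathcal{O}$). Setting $\varepsilon:=u^{-1}$ gives the desired decomposition $\alpha=p^n\beta_1\cdots\beta_r\varepsilon$.

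For the condition that no adjacent product $\beta_i\beta_{i+1}$ equals $p$, I would argue by contradiction: an equality $\beta_i\beta_{i+1}=p$ allows pulling the central scalar $p$ outside, yielding $\beta_1\cdots\beta_r=p\,(\beta_1\cdots\beta_{i-1}\beta_{i+2}\cdots\beta_r)$, hence $p\mid\alpha_0 u$ in $\mathcal{O}$ and therefore $p\mid\alpha_0$, contradicting the primitivity of $\alpha_0$ established in the first step. For uniqueness, $n$ and $\alpha_0$ are intrinsic to $\alpha$ as explained above; the unit $u$ is then pinned down (up to sign) by the $\xi$-primary class set axiom, forcing $\varepsilon=u^{-1}$, and the $\beta_i$'s are fixed by the uniqueness part of the Zerlegungssatz.

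The main delicate point is the uniqueness clause when $2\in\xi\mathcal{O}$: the signs appearing in the Zerlegungssatz factorisation of $\alpha_0 u$ and the sign ambiguity of $u$ itself must be tracked simultaneously, so as to match exactly the ambiguity announced in the statement (``$\varepsilon$ unique up to sign, the $\beta_i$ unique''). The cleanest way to handle this is to normalise $u$ so that $\alpha_0 u$ represents a fixed chosen element of $\mathcal{P}$, absorb the global sign once and for all into $\varepsilon$, and then invoke the Zerlegungssatz on an object that is now determined up to an overall sign; the ambiguity then collapses onto $\varepsilon$, as required.
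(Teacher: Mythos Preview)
Your argument is essentially the paper's own: write $\alpha=p^{n}\alpha_{0}$ with $\alpha_{0}\in\mathcal{O}$ primitive, right-multiply by a unit to make it $\xi$-primary, and feed the result to the Zerlegungssatz (Theorem~\ref{Zerlegungssatz}), recovering $\varepsilon$ as that unit's inverse; your explicit checks that $\alpha_{0}$ is primitive and that $\beta_{i}\beta_{i+1}\neq p$ are in fact more detailed than what the paper writes. One small caveat: your appeal to Remark~\ref{xi2} to get $\mathrm{Nm}(\xi)\in\{1,2,4\}$ presupposes $2\in\xi\mathcal{O}$, which Theorem~\ref{Dec} does not assume, so in the general case you should instead argue directly that $\gcd(\mathrm{Nm}(\alpha_{0}),\mathrm{Nm}(\xi))=1$ (the paper glosses over this point as well).
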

		
\begin{proof}
Let us take $\alpha\in\mathcal{O}[1/p]^\times$. Since the norm maps $\mathcal{O}[1/p]^{\times}$ into $\mathbb{Z}[1/p]^{\times}$ and the order $\mathcal{O}$ is definite, then $\mathrm{Nm}(\alpha)=p^{s}$, for some $s\in\mathbb{Z}$. Let $l\geq 0$ be the smallest non-negative integer such that $p^l\alpha\in\mathcal{O}$. Let $m\geq 1$ be the greatest common divisor of the integral coordinates of $p^l\alpha$ in some integral basis of $\mathcal{O}$, and put 
$\beta:=\frac{p^l\alpha}{m}$. Then $\mathrm{Nm}(\beta)=\frac{p^{2l+s}}{m^2}\in\mathbb{Z}$, so $m$ is a power of $p$, say $m=p^{t}$, and $\alpha= p^{t-l}\beta$.
Now $\beta\in\mathcal{O}$ is a primitive quaternion with $\mathrm{Nm}(\beta)=p^{2l+s-2t}$, and since by assumption $\mathcal{O}$ contains a $\xi$-primary class set, 
there exists a unique $\varepsilon\in\mathcal{O}^\times$ (up to sign if $2\in\xi\mathcal{O}$) such that $\beta':=\beta\varepsilon^{-1}$ is a primitive and $\xi$-primary quaternion having the same norm as $\beta$. 
Therefore we can apply Theorem \ref{Zerlegungssatz} to $\beta'$ and obtain the result with $n:=t-l$.
\end{proof}

Since $p\nmid D$, we can take a $p$-adic matrix immersion $\Phi_{p}: H\hookrightarrow\mathrm{M}_{2}(\mathbb{Q}_{p})$.
We now consider the following groups introduced in section \ref{Sec1}:
$$\Gamma_{p}:=\Phi_p(\mathcal{O}[1/p]^\times)/\mathbb{Z}[1/p]^\times\quad\mbox{and}\quad\Gamma_{p,+}:=\Phi_p(\mathcal{O}[1/p]^\times_+)/\mathbb{Z}[1/p]^\times.$$
These are the important groups arising in the $p$-adic uniformisation of a Shimura curve with discriminant $Dp$. We will show that, under certain assumptions on $H$ and $\mathcal{O}$, we can find a Schottky group $\Gamma^{\mathrm{Sch}}\subseteq\Gamma_{p}$ and a finite system of generators for $\Gamma^{\mathrm{Sch}}$. As suggested by the proof of Theorem \ref{teo_estructura_grupos}, we can look for this  group among the \emph{principal congruence subgroups} of $\Gamma_{p}$.

Take $\xi\in\mathcal{O}$ satisfying the right-unit property in $\mathcal{O}$ (cf. Definition \ref{units_property}). From now on we will assume that $2\in\xi\mathcal{O}$. We then distinguish two non-trivial cases: either $\xi=2$ or $\mathrm{Nm}(\xi)=2$ (cf. Remark \ref{xi2}). In the first case, we consider the following group homomorphism:
$$\pi: \mathcal{O}[1/p]^\times \rightarrow  (\mathcal{O}/2\mathcal{O})^\times=(\mathcal{O}/2\mathcal{O})_r^\times,
\quad\alpha \mapsto \alpha+2\mathcal{O}$$
(cf. Remark \ref{xi=n} for the equality). If $\mathrm{Nm}(\xi)=2$, we consider the following group homomorphism:
$$\pi: \mathcal{O}[1/p]^\times \stackrel{\pi'}{\rightarrow} (\mathcal{O}/2\mathcal{O})^\times\stackrel{\lambda_r}{\twoheadrightarrow} (\mathcal{O}/\xi\mathcal{O})_r^\times
\quad\alpha\mapsto \alpha + 2\mathcal{O} \mapsto  \alpha+\xi\mathcal{O},$$
where $(\mathcal{O}/\xi\mathcal{O})_r^\times$ has the structure of multiplicative group induced by $\mathcal{O}^{\times}/\mathbb{Z}^{\times}\simeq(\mathcal{O}/\xi\mathcal{O})_r^\times$ (cf. Remark \ref{group_structure}).
Note that, since $p$ is odd, any $\alpha\in \mathcal{O}[1/p]^\times$ has norm $\mathrm{Nm}(\alpha)\equiv 1\;\mathrm{mod}\;2$ and $\pi$ is well-defined.
Moreover, $\pi$ factorises via $\mathbb{Z}[1/p]^\times$, since $\alpha=\pm p^n\beta$ implies that $\alpha\equiv\beta$ mod 2$\mathcal{O}$.

\begin{lemma}\label{exact_seq}
	Let $\xi\in\mathcal{O}$ be an integral quaternion satisfying the right-unit property in $\mathcal{O}$ with $2\in\xi\mathcal{O}$, and let $p$ be an odd prime. Then 
	there is a split short exact sequence of groups
	$$1\rightarrow\{\alpha\in\mathcal{O}[1/p]^\times\mid\,\alpha-1\in\xi\mathcal{O}\}/\mathbb{Z}[1/p]^\times\rightarrow\mathcal{O}[1/p]^{\times}/\mathbb{Z}[1/p]^\times\stackrel{\pi}{\rightarrow}(\mathcal{O}/\xi\mathcal{O})_r^{\times}\rightarrow 1.$$
\end{lemma}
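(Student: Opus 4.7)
The plan is to prove the lemma in three stages: (i) verify that $\pi$ descends to a group homomorphism on $\mathcal{O}[1/p]^{\times}/\mathbb{Z}[1/p]^{\times}$, (ii) identify its kernel as the stated subgroup, and (iii) use the right-unit property to obtain both surjectivity and a splitting in one stroke.

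For the first stage, I would check that every element $\pm p^{n}\in\mathbb{Z}[1/p]^\times$ maps to the identity of $(\mathcal{O}/\xi\mathcal{O})_r^{\times}$: since $p$ is odd one has $\pm p^{n}\equiv 1\pmod{2}$ for $n\geq 0$, and $2\in\xi\mathcal{O}$; for $n<0$ one uses that $\mathrm{Nm}(\xi)\in\{2,4\}$ annihilates $\mathcal{O}/\xi\mathcal{O}$, so multiplication by $p$ is invertible on the quotient with inverse congruent to $1$. Multiplicativity of $\pi$ is built in when $\xi=2$, where $(\mathcal{O}/2\mathcal{O})^\times$ is the unit group of a ring and $\pi$ is ring-theoretic reduction. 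When $\mathrm{Nm}(\xi)=2$, the target carries the transported group structure via $\varphi$ (see Remark \ref{group_structure}), and one must check that reduction modulo $\xi\mathcal{O}$ respects quaternion multiplication; unwinding the identity $\alpha_{1}\alpha_{2}-u_{1}u_{2}=(\alpha_{1}-u_{1})\alpha_{2}+u_{1}(\alpha_{2}-u_{2})$ reduces this to the assertion $\mathcal{O}^\times\cdot\xi\subseteq\xi\mathcal{O}$, which can be verified case-by-case from Table \ref{Table_xi}. This is the main technical obstacle.

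The second stage is immediate from the definition: $[\alpha]$ lies in the kernel exactly when $\alpha+\xi\mathcal{O}=1+\xi\mathcal{O}$, i.e.\ $\alpha-1\in\xi\mathcal{O}$, so the kernel is the image in the quotient of $\{\alpha\in\mathcal{O}[1/p]^\times\mid \alpha-1\in\xi\mathcal{O}\}$, which is precisely the stated left-hand subgroup.

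For the third stage, the inclusion $\mathcal{O}^\times\hookrightarrow\mathcal{O}[1/p]^\times$ descends to an embedding $\iota:\mathcal{O}^\times/\mathbb{Z}^\times\hookrightarrow\mathcal{O}[1/p]^\times/\mathbb{Z}[1/p]^\times$, since a unit of $\mathcal{O}$ has norm $1$ and therefore $\mathcal{O}^\times\cap\mathbb{Z}[1/p]^\times=\mathbb{Z}^\times$. By construction the composition $\pi\circ\iota$ agrees with the map $\varphi$ of Definition \ref{units_property}, which is bijective by the right-unit property. This immediately gives surjectivity of $\pi$, and setting $s:=\iota\circ\varphi^{-1}$ yields a group homomorphism (because $\varphi$ is a group isomorphism with respect to the transported structure on $(\mathcal{O}/\xi\mathcal{O})_r^\times$) satisfying $\pi\circ s=\mathrm{id}$, furnishing the desired splitting.
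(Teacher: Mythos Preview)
Your proof is correct and in fact more self-contained than the paper's. The paper establishes surjectivity of $\pi$ by invoking the Strong Approximation Theorem for $\mathcal{O}[1/p]$ (which satisfies Eichler's condition), and only then uses the bijection $\varphi:\mathcal{O}^{\times}/\mathbb{Z}^{\times}\to(\mathcal{O}/\xi\mathcal{O})_r^{\times}$ to produce the splitting. You collapse these two steps into one: since $\pi\circ\iota=\varphi$ is bijective by hypothesis, surjectivity of $\pi$ is immediate and the section $s=\iota\circ\varphi^{-1}$ comes for free. This avoids the appeal to strong approximation entirely, at the cost of being specific to the right-unit hypothesis (the paper's argument shows $\pi$ is surjective even without that hypothesis, which is irrelevant for the lemma as stated but conceptually cleaner).

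Your stage (i) is also more scrupulous than the paper, which simply asserts that $\pi$ is a group homomorphism. You correctly isolate the nontrivial point when $\mathrm{Nm}(\xi)=2$: multiplicativity with respect to the transported group law on $(\mathcal{O}/\xi\mathcal{O})_r^{\times}$ reduces, via the identity $\alpha_1\alpha_2-u_1u_2=(\alpha_1-u_1)\alpha_2+u_1(\alpha_2-u_2)$, to the inclusion $\mathcal{O}^{\times}\xi\subseteq\xi\mathcal{O}$, which is a finite check against Table~\ref{Table_xi}. The paper does not make this explicit.
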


\begin{proof}
Since the order $\mathcal{O}[1/p]\subseteq H$ satisfies Eichler's condition, we can apply the Strong Approximation Theorem (cf. \cite[Ch III, Th\'{e}or\`{e}me 4.3]{Vigneras1980} and \cite[Lemma 1.1]{Rapinchuk2013}) in order to show that the map $\pi$ is surjective. Finally, since we have a bijection
$\varphi:\mathcal{O}^{\times}/\mathbb{Z}^{\times}\rightarrow(\mathcal{O}/\xi\mathcal{O})_r^{\times},\ \varphi(u)=[u],$
there is a split given by
$(\mathcal{O}/\xi\mathcal{O})_r^\times\simeq\mathcal{O}^\times/\mathbb{Z}^\times\hookrightarrow\mathcal{O}[1/p]^\times/\mathbb{Z}[1/p]^\times$. 
\end{proof}

For any $\xi\in\mathcal{O}$ satisfying the right-unit property in $\mathcal{O}$ with $2\in\xi\mathcal{O}$, we define the following congruence subgroups:
\[
\widetilde{\Gamma}_{p}(\xi):=\Phi_{p}(\{\alpha\in\mathcal{O}[1/p]^{\times}\mid\,\alpha\equiv1\mbox{ mod }\xi\mathcal{O}\})\subseteq\mathrm{GL}_{2}(\mathbb{Q}_{p}),
\]
\[
\Gamma_{p}(\xi):=\widetilde{\Gamma}_{p}(\xi)/\mathbb{Z}[1/p]^{\times}\subseteq\mathrm{PGL}_{2}(\mathbb{Q}_{p}).
\]
 Note that $\Gamma_{p}(\xi)$ is a normal subgroup in $\Gamma_{p}$ of finite index, which can be referred to as the \emph{principal congruence subgroup} of $\Gamma_p$ of level $\xi\mathcal{O}$.
In what follows we will see that, under certain assumptions, the group $\Gamma_{p}(\xi)$ is a Schottky group and we will find a finite and free system of generators for it. With this aim, we define the following finite set:
$$\widetilde{S}:=\Phi_{p}(\{\alpha\in\mathcal{O}\mid\, \mathrm{Nm}(\alpha)=p,\alpha\equiv1\mbox{ mod }\xi\mathcal{O}\})\subseteq\widetilde{\Gamma}_p(\xi),$$
and we denote by $S$ the image of $\widetilde{S}$ in $\Gamma_p(\xi)$ under the natural projection $\widetilde{\Gamma}_p(\xi)\twoheadrightarrow\Gamma_p(\xi)$.

\begin{remark}
	Take $\alpha=a_0+a_1\theta_{1}+a_2\theta_{2}+a_3\theta_{3}\in\mathcal{O}$, where $\{1,\theta_{1},\theta_{2},\theta_3\}$ is an integral basis of $\mathcal{O}$. Suppose that $\Phi_p(\alpha)\in\widetilde{S}$. Then $\alpha=(a_0,a_1,a_2,a_3)\equiv(1,0,0,0)\mbox{ mod }\xi\mathcal{O}$, so $-\alpha=(-a_0,-a_1,-a_2,-a_3)\equiv (1,0,0,0)\mbox{ mod }\xi\mathcal{O}$ and $\overline{\alpha}=(a_0,-a_1,-a_2,-a_3)\equiv(1,0,0,0)\mbox{ mod }\xi\mathcal{O}$, because we are assuming that $2\in\xi\mathcal{O}$.
	Thus, if $\Phi_p(\alpha)\in\widetilde{S}$, then $\Phi_p(\pm\alpha),\Phi_p(\pm\overline{\alpha})\in\widetilde{S}$.
\end{remark}

Following the same idea from \cite[Ch. IX]{Gerritzen_vanderPut1980}, we start by splitting $\widetilde{S}$ in two disjoint sets
$$\widetilde{S}=\Phi_{p}(\{\pm\alpha_1,\ldots,\pm\alpha_s,\pm\overline{\alpha}_1,\ldots,\pm\overline{\alpha}_s\})\cup\Phi_{p}(\{\pm\beta_1,\ldots,\pm\beta_t\}),$$
where $\pm\alpha_i,\pm\overline{\alpha}_{i}\in\mathcal{O}$, for $1\leq i\leq s$, are the \textit{impure} quaternions in $\widetilde{S}$ (i.e. $\mathrm{Tr}(\alpha_{i})\neq 0$), and $\pm\beta_j$, for $1\leq j\leq t$, are the \textit{pure} quaternions in $\widetilde{S}$ (i.e. $\mathrm{Tr}(\beta_{i})=0$).
We then have:
$$S=\{[\Phi_p(\alpha_1)],\ldots,[\Phi_p(\alpha_s)],[\Phi_p(\beta_1)],\ldots,[\Phi_p(\beta_t)]\},$$
where $[\Phi_{p}(\alpha_{i})],[\Phi_{p}(\beta_{j})]$ denote the classes of the matrices $\Phi_{p}(\alpha_{i}),\, \Phi_{p}(\beta_{j})$ inside $\mathrm{PGL}_{2}(\mathbb{Q}_{p})$.
From now on $s$ and $t$ denote the integers such that $\# \widetilde{S}=4s+2t$ and $\#S=s+t$.

\begin{defn}\label{null-trace}
	For every odd prime $p\nmid DN$, we define the following integer:
	$$t_{\xi}(p):=\#\{\alpha\in \mathcal{O}\mid\mathrm{Nm}(\alpha)=p,\,\alpha\equiv 1\;\mathrm{mod}\;\xi\mathcal{O},\ \mathrm{Tr}(\alpha)=0\}.$$
	Therefore, $t_{\xi}(p)=0$ if and only if there is no transformation $\gamma\in S$ with $\mathrm{Tr}(\gamma)=0$. In this case, we will say that $p$ satisfies the \textit{null-trace condition} with respect to $\xi\mathcal{O}$.
\end{defn}

\begin{teo}\label{Theorem_Schottky_group}
Let $H$ be a definite quaternion algebra of discriminant $D$ and $\mathcal{O}=\mathcal{O}_H(N)\subseteq H$ an Eichler order of level $N$ with $h(D,N)=1$. Let $p\nmid DN$ be an odd prime.
Consider
$$S=\{[\Phi_p(\alpha_1)],\ldots,[\Phi_p(\alpha_s)],[\Phi_p(\beta_1)],\ldots,[\Phi_p(\beta_t)]\}\subseteq\mathrm{PGL}_2(\mathbb{Q}_p),$$
where $\alpha_1,\ldots,\alpha_s\in\mathcal{O}$ are the impure quaternions in $\mathcal{O}$ with norm $p$ $\mathrm{(}$up to sign and conjugation$\mathrm{)}$ and $\beta_1,\ldots,\beta_t$ are the pure quaternions in $\mathcal{O}$ with norm $p$ $\mathrm{(}$up to sign$\mathrm{)}$.
Then for every $\xi\in\mathcal{O}$ with $2\in\xi\mathcal{O}$ that satisfies the right-unit property in $\mathcal{O}$, the principal congruence subgroup $\Gamma_p(\xi)$ has $S$ as a system of generators and the only relations between them are
$
\mbox{$[\Phi_p(\beta_i)]^{2}=1$, for  $1\leq i\leq t$}.
$
In particular, if $p$ satisfies the null-trace condition with respect to $\xi\mathcal{O}$, then $\Gamma_{p}(\xi)$ is a Schottky group of rank $s$.
\end{teo}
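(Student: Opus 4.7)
The plan is to exploit the unique factorisation in $\mathcal{O}[1/p]^\times$ provided by Theorem \ref{Dec}, and to carefully match its cancellations with group-theoretic reductions in the abstract group presented by $S$.

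For the generation claim: given $[\Phi_p(\alpha)]\in\Gamma_p(\xi)$, the representative $\alpha\in\mathcal{O}[1/p]^\times$ satisfies $\alpha\equiv 1 \bmod \xi\mathcal{O}$. I would apply Theorem \ref{Dec} to write $\alpha = p^n\,\gamma_1\cdots\gamma_r\,\varepsilon$, with $\gamma_i$ primitive $\xi$-primary quaternions of norm $p$ and $\varepsilon\in\mathcal{O}^\times$ unique up to sign. Each $\gamma_i$ satisfies $\gamma_i\equiv 1\bmod\xi\mathcal{O}$ (as $\mathcal{P}=\{[1]\}$ by Lemma \ref{isom}). Since $p$ is odd and $2\in\xi\mathcal{O}$, we have $p\equiv 1\bmod\xi\mathcal{O}$. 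Reducing modulo $\xi\mathcal{O}$ forces $\varepsilon\equiv 1\bmod\xi\mathcal{O}$, and the right-unit property then gives $\varepsilon\in\mathbb{Z}^\times=\{\pm 1\}$. Both the factor $p^n$ and $\varepsilon=\pm 1$ vanish in $\Gamma_p(\xi)=\widetilde\Gamma_p(\xi)/\mathbb{Z}[1/p]^\times$, so $[\Phi_p(\alpha)] = \prod_i [\Phi_p(\gamma_i)]$. Each $\gamma_i\in\widetilde S$ is, up to sign, some $\alpha_k$, $\bar\alpha_k$, or $\beta_j$; because $\alpha_k\bar\alpha_k = p\in\mathbb{Z}[1/p]^\times$, the class $[\Phi_p(\bar\alpha_k)]$ equals $[\Phi_p(\alpha_k)]^{-1}$, and every factor lies in $S\cup S^{-1}$.

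For the relations claim: the identity $\beta_j^2 = -\mathrm{Nm}(\beta_j) = -p\in\mathbb{Z}[1/p]^\times$ immediately gives $[\Phi_p(\beta_j)]^2 = 1$. To prove these are the only relations, set $G := \langle S \mid [\Phi_p(\beta_j)]^2 = 1,\ j=1,\dots,t\rangle$ and consider the natural surjection $G\twoheadrightarrow \Gamma_p(\xi)$. Take a word $w$ in $S^{\pm 1}$ that is freely reduced and contains no substring $[\Phi_p(\beta_j)]^{\pm 2}$, and suppose its image in $\Gamma_p(\xi)$ is trivial. Lifting each letter to a quaternion (letters $[\Phi_p(\alpha_k)]^{-1}$ lifting to $\bar\alpha_k$), the product $\tilde w\in\mathcal{O}[1/p]^\times$ must then lie in $\mathbb{Z}[1/p]^\times$, i.e.\ equal $\pm p^m$. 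But $\tilde w$ is a product of primitive $\xi$-primary quaternions of norm $p$, so the uniqueness in Theorem \ref{Dec} forces some adjacent pair to cancel via $\tilde\gamma_i\tilde\gamma_{i+1}=\pm p$, which in turn requires $\tilde\gamma_{i+1}=\pm\overline{\tilde\gamma_i}$. If $\tilde\gamma_i$ is impure, this is the free-group cancellation $\alpha_k\alpha_k^{-1}$, contradicting reducedness; if $\tilde\gamma_i$ is pure, it is $\beta_j(\pm\beta_j)$, contradicting the hypothesis that $w$ contains no $[\Phi_p(\beta_j)]^{\pm 2}$. Therefore $w$ is empty and $G\simeq\Gamma_p(\xi)$.

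Under the null-trace hypothesis $t=0$, the group $\Gamma_p(\xi)\simeq G$ is the free group $F_s$ of rank $s$, hence torsion-free. It is finitely generated by construction, and discontinuous as a subgroup of the discrete cocompact group $\Gamma_p\subseteq\mathrm{PGL}_2(\mathbb{Q}_p)$, whose limit set equals $\mathbb{P}^1(\mathbb{Q}_p)\subsetneq\mathbb{P}^1(\mathbb{C}_p)$. The three conditions of Definition \ref{Def_Schottky_groups} hold, so $\Gamma_p(\xi)$ is a $p$-adic Schottky group of rank $s$. The main obstacle I anticipate is the bookkeeping in the relations step: tracking signs, conjugates, and the $\mathcal{O}^\times/\mathbb{Z}^\times$ ambiguity inherent to the case $2\in\xi\mathcal{O}$ carefully enough that every Zerlegungssatz-type cancellation in $\mathcal{O}[1/p]^\times$ matches either a trivial free-group reduction or the single relation $[\Phi_p(\beta_j)]^2 = 1$, with no hidden extra relations leaking through sign ambiguities.
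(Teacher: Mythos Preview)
Your proposal is correct and follows essentially the same strategy as the paper: both generation and the determination of relations are deduced from the unique factorisation of Theorem \ref{Dec}. For the generation step, the paper proceeds somewhat differently: it sets up two semi-direct product decompositions of $\Gamma_p$ (one coming from the split exact sequence of Lemma \ref{exact_seq}, the other from Theorem \ref{Dec}) and compares them via a commutative diagram to conclude $\Gamma_p(\xi)=\langle S\rangle$. Your direct argument---reducing $\alpha=p^n\gamma_1\cdots\gamma_r\varepsilon$ modulo $\xi\mathcal{O}$ and invoking the right-unit property to force $\varepsilon\in\mathbb{Z}^\times$---reaches the same conclusion more elementarily and avoids the abstract comparison. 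For the relations step, your argument is in fact slightly more careful than the paper's: the paper only treats words in the impure generators $[\Phi_p(\alpha_i)]$ and does not explicitly discuss mixed words containing some $[\Phi_p(\beta_j)]$, whereas your normal-form argument in the abstract group $G$ handles both cases uniformly by matching every adjacent cancellation $\tilde\gamma_i\tilde\gamma_{i+1}=\pm p$ either to a free reduction or to the relation $[\Phi_p(\beta_j)]^2=1$.
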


\begin{proof}
On the one hand, the split short exact sequence described in Lemma \ref{exact_seq} gives an isomorphism of groups
$\Gamma_p\simeq\Gamma_p(\xi)\rtimes(\mathcal{O}/\xi\mathcal{O})_r^\times.$
On the other hand, since we are in the hypothesis of Lemma \ref{isom} we have that $\mathcal{P}=\{[1]\}$ is a $\xi$-primary class set for $\mathcal{O}$. By Theorem \ref{Dec}, every element $\alpha\in\mathcal{O}[1/p]^\times$ can be written uniquely up to sign as $\alpha=p^{n}\prod_{i=1}^r\beta_i\cdot\varepsilon$, for $n\in\mathbb{Z},\ \varepsilon\in\mathcal{O}^\times$ and $\beta_1,\ldots,\beta_r\in\mathcal{O}$ primitive $\xi$-primary quaternions with respect to $\mathcal{P}$ with $\mathrm{Nm}(\beta_i)=p$ and with no factor of the form $\beta_i\cdot\beta_{i+1} = p$. This gives the following split short exact sequence:
$$\begin{array}{ccccccccc}
	1 & \rightarrow & \langle\{\alpha\in\mathcal{O}\mid \mathrm{Nm}(\alpha)=p,\, \alpha\equiv1\mbox{ mod }\xi\mathcal{O}\}\rangle/\mathbb{Z}^\times & \rightarrow \mathcal{O}[1/p]^\times/\mathbb{Z}^\times & \rightarrow & \mathcal{O}^\times/\mathbb{Z}^\times & \rightarrow & 1, \\
	& & & \alpha & \mapsto & \varepsilon
\end{array}$$
since 
$
\{\alpha=p^{n}\prod_{i=1}^{r}\beta_{i}\mid\mathrm{Nm}(\beta_{i})=p,\:\beta_{i}\equiv 1\;\mathrm{mod}\;\xi\mathcal{O}\}= 
\langle\{\alpha\in\mathcal{O}\mid \mathrm{Nm}(\alpha)=p,\, \alpha\equiv1\mbox{ mod }\xi\mathcal{O}\}\rangle.
$
Hence, we obtain the following semi-direct product:
\[
\mathcal{O}[1/p]^{\times}/\mathbb{Z}^\times\simeq
\langle\{\alpha\in\mathcal{O}\mid \mathrm{Nm}(\alpha)=p,\, \alpha\equiv1\mbox{ mod }\xi\mathcal{O}\}\rangle/\mathbb{Z}^\times
\rtimes\mathcal{O}^{\times}/\mathbb{Z}^\times.
\]
After taking the quotient by its centre we obtain
$\Gamma_p=\langle S\rangle\rtimes\mathcal{O}^\times/\mathbb{Z}^\times.$
Since $\xi$ satisfies the right-unit property in $\mathcal{O}$, the natural map $\varphi:\mathcal{O}^\times/\mathbb{Z}^\times\rightarrow(\mathcal{O}/\xi\mathcal{O})_r^\times$ is an isomorphism of groups (cf. Definition \ref{units_property}). Thus the following diagram commutes:
$$\begin{array}{ccccccccc}
	1 & \rightarrow & \Gamma_p(\xi) & \rightarrow & \Gamma_p & \rightarrow & \mathcal{O}^\times/\mathbb{Z}^\times & \rightarrow & 1 \\
	& & \hookuparrow & & \rotatebox{90}{\(=\)} & & \rotatebox{90}{\(\simeq\)} \\
		1 & \rightarrow & \langle S \rangle & \rightarrow & \Gamma_p & \rightarrow & (\mathcal{O}/\xi\mathcal{O})_r^\times & \rightarrow & 1 \\
\end{array}$$
and we obtain that
$
\Gamma_{p}(\xi)=\langle S\rangle.
$
With this we have a finite system of generators for $\Gamma_p(\xi)$.

Now let us look at the possible relations between these generators. First note that for $1\leq i\leq t$ we have the relations $[\Phi_{p}(\beta_{i})]^{2}=1$ in $S$.
Let us see that these are the only possible ones. Since $\mathrm{Nm}(\alpha_i)=p$ for every $1\leq i\leq s$, if there is a relation between the $[\Phi_p(\alpha_i)]$, this must have an even number of elements.
Suppose that $[\Phi_p(\alpha_1)]\cdot\ldots\cdot[\Phi_p(\alpha_{2k})]=[1]$, for $k\geq1$.
This means $\alpha:=\alpha_1\cdot\ldots\cdot\alpha_{2k}=\pm p^m$, for $m\geq1$, and taking norms we have $m=k$. By Theorem \ref{Dec}, $\alpha$ has a decomposition, unique up to sign, as a product $\alpha=p^n\cdot\varepsilon\cdot \prod_{i=1}^r\beta_i$, with $n\in\mathbb{Z},\ \varepsilon\in\mathcal{O}^\times$ and
\begin{enumerate}[(a)]
	\item $\beta_1,\ldots,\beta_r\in\mathcal{O}$ primitive and $\xi$-primary quaternions with respect to $\mathcal{P}=\{[1]\}$,
	\item $\mathrm{Nm}(\beta_i)=p$,
	\item no factor of the form $\beta_i\cdot\beta_{i+1} = p$ appears in the product.
\end{enumerate}
Since $\alpha=\pm p^k$, by the uniqueness of this decomposition we cannot have $\alpha=\alpha_1\cdot\ldots\cdot\alpha_{2k}$ with the $\alpha_i$ satisfying conditions (a)-(c) simultaneously. Since the $\alpha_i$ already satisfy conditions (a) and (b), this means that there are $\alpha_i,\alpha_{i+1}$ such that $\alpha_i\alpha_{i+1}=p$, so $\alpha_i=\overline{\alpha}_{i+1}$, and this is a contradiction because we already excluded these elements in $S$.
\end{proof}

Finally, when $\Gamma_p(\xi)$ is a Schottky group (i.e. when $p$ satisfies $t_{\xi}(p)=0$), we want to know the rank of this group; that is, we want to compute $s$. Before doing so, we need some notation.

Once a presentation $H=\left(\frac{a,b}{\mathbb{Q}}\right)$ and a basis for $\mathcal{O}$ are fixed, let $\mathrm{N}_{(a,b),4}(X,Y,Z,T):=X^2-aY^2-bZ^2+abT^2$ denote the quaternary normic form associated with $H$ with respect to the basis $\{1,i,j,ij\}$ and $\mathrm{N}_{\mathcal{O},4}(X,Y,Z,T)$ the normic form associated with $\mathcal{O}$ with respect to the chosen basis. We consider the number of representations of a prime $p$ by these normic forms:
$$r(\mathrm{N}_{(a,b),4},p;\mathbb{Z}):=\#\{(x,y,z,t)\in\mathbb{Z}^4\mid\mathrm{N}_{(a,b),4}(x,y,z,t)=p\},$$
$$r(\mathrm{N}_{\mathcal{O},4},p;\mathbb{Z}):=\#\{(x,y,z,t)\in\mathbb{Z}^4\mid\mathrm{N}_{\mathcal{O},4}(x,y,z,t)=p\}.$$
Note that $r(\mathrm{N}_{\mathcal{O},4},p;\mathbb{Z})$ does not depend on the presentation of the algebra nor on the basis of $\mathcal{O}$ (cf. \cite{AlsinaBayer2004} Prop. 3.86), while $r(\mathrm{N}_{(a,b),4},p;\mathbb{Z})$ does depend on the presentation of $H$. Given $\xi\in\mathcal{O}$ we also define
$$r(\mathrm{N}_{\mathcal{O},4},p;\xi):=\#\{\alpha\in\mathcal{O}\mid \mathrm{Nm}(\alpha)=p,\ \alpha\equiv 1\mbox{ mod }\xi\mathcal{O}\}.$$
Note that, when $\xi=2$, then
$r(\mathrm{N}_{\mathcal{O},4},p;2)=\#\{(x,y,z,t)\in\mathbb{Z}^4\mid\mathrm{N}_{\mathcal{O},4}(x,y,z,t)=p,\ (x,y,z,t)\equiv(1,0,0,0)\mbox{ mod } 2\},$
and when $\xi=1$, then
$r(\mathrm{N}_{\mathcal{O},4},p;1)=r(\mathrm{N}_{\mathcal{O},4},p;\mathbb{Z}).$

The following is a fundamental result concerning the numbers of representations of a prime $p$ by certain quaternary quadratic forms that generalises a well-known result of \cite{Hurwitz1896}.

\begin{teo}\label{primary_representations}
	Let $H$ be a definite quaternion algebra of discriminant $D$ and let $\mathcal{O}\subseteq H$ be an Eichler order of level $N$ with $h(D,N)=1$. 
	Let $p\nmid DN$ be a prime, and let $\xi\in\mathcal{O}$ be an integral quaternion such that $\mathcal{O}/\xi\mathcal{O}$ contains a $\xi$-primary class set. 
	Then the cardinality of the finite set
	$
	\{\pi\in\mathcal{O}\mid\mathrm{Nm}(\pi)=p,\ \pi\mbox{ is }\xi\mbox{-primary}\,\}
	$
is equal to:
	\begin{enumerate}[$(a)$]
		\item
		$p+1$, if $2\notin\xi\mathcal{O}$,
		\item
		$2(p+1)$, if $2\in\xi\mathcal{O}$.
	\end{enumerate}
\end{teo}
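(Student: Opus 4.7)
The plan is to count $\xi$-primary quaternions of norm $p$ by parametrising them through the right ideals they generate. Any $\pi\in\mathcal{O}$ with $\mathrm{Nm}(\pi)=p$ is automatically primitive (a common integer factor $m$ of the coordinates would force $m^{2}\mid p$), and Lemma~\ref{primes} then makes it irreducible, so $\pi\mathcal{O}$ is an integral right ideal of $\mathcal{O}$ of reduced norm $p$. Conversely, since $h(D,N)=1$ every right ideal is principal, and the hypothesis $p\nmid DN$ gives $\mathcal{O}_{p}\simeq\mathrm{M}_{2}(\mathbb{Z}_{p})$; the standard identification of maximal right ideals of $\mathrm{M}_{2}(\mathbb{Z}_{p})$ with $\mathbb{P}^{1}(\mathbb{F}_{p})$ then produces exactly $p+1$ right ideals of $\mathcal{O}$ of norm $p$. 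Each such ideal has $|\mathcal{O}^{\times}|$ generators $\{\pi u:u\in\mathcal{O}^{\times}\}$, pairwise distinct in $\mathcal{O}$ because $\pi$ is invertible in $H$, so the full set of quaternions of norm $p$ in $\mathcal{O}$ has cardinality $(p+1)|\mathcal{O}^{\times}|$.

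I would then fix a right ideal $I=\pi\mathcal{O}$ of norm $p$ and count how many of its generators are $\xi$-primary. Since $p$ is coprime to $\mathrm{Nm}(\xi)$ (the prime $p$ is odd by the standing convention, and $\mathrm{Nm}(\xi)$ is a power of $2$ in all the cases of Table~\ref{Table_xi}), the residue $\pi+\xi\mathcal{O}$ lies in $(\mathcal{O}/\xi\mathcal{O})_{r}^{\times}$. The map $u\mapsto\pi u+\xi\mathcal{O}$ carries the $|\mathcal{O}^{\times}|$ generators of $I$ onto the orbit of $\pi+\xi\mathcal{O}$ under right multiplication by $\mathcal{O}^{\times}$, and by definition of a $\xi$-primary class set this orbit meets $\mathcal{P}$ in exactly one class.

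In case~(a), where $2\notin\xi\mathcal{O}$, freeness of the $\mathcal{O}^{\times}$-action on $(\mathcal{O}/\xi\mathcal{O})_{r}^{\times}$ renders $u\mapsto\pi u+\xi\mathcal{O}$ injective, so the unique $\xi$-primary residue in the orbit is realised by exactly one generator $\pi u$. Hence each of the $p+1$ ideals contributes a single $\xi$-primary quaternion, giving the total $p+1$. In case~(b), where $2\in\xi\mathcal{O}$, the relation $2\alpha\in\xi\mathcal{O}$ gives $\pi u\equiv\pi(-u)\pmod{\xi\mathcal{O}}$ for every $u$, so $\{\pm 1\}$ lies in the kernel of $u\mapsto\pi u+\xi\mathcal{O}$; freeness of the induced $\mathcal{O}^{\times}/\mathbb{Z}^{\times}$-action then forces equality. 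Consequently the $|\mathcal{O}^{\times}|$ generators collapse to $|\mathcal{O}^{\times}|/2$ distinct residues, each realised by the pair $\pm\pi u$, and the single $\xi$-primary residue in the orbit accounts for two generators per ideal, producing the total $2(p+1)$.

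The main obstacle I anticipate is securing the local count of $p+1$ right ideals of norm $p$ cleanly---this rests on $\mathcal{O}_{p}$ being maximal (ensured by $p\nmid N$) and on the classical bijection between maximal right ideals of $\mathrm{M}_{2}(\mathbb{Z}_{p})$ and $\mathbb{P}^{1}(\mathbb{F}_{p})$; once that is in place, what remains is a pure orbit count, needing only the mild sign bookkeeping in case~(b).
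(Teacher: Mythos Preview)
Your argument is correct and is organized somewhat differently from the paper's. The paper counts via the auxiliary set
\[
A_{p}:=\{[\alpha]\in\mathcal{O}/p\mathcal{O}\mid \alpha\notin p\mathcal{O},\ \mathrm{Nm}(\alpha)\equiv 0\pmod p\},
\]
which has $(p^{2}-1)(p+1)$ elements via $\mathcal{O}/p\mathcal{O}\simeq\mathrm{M}_{2}(\mathbb{F}_{p})$, and defines a map $\Sigma:A_{p}\to\{\xi\text{-primary }\pi:\mathrm{Nm}(\pi)=p\}$ (mod $\pm 1$ in case~(b)) sending $[\alpha]$ to the unique $\xi$-primary generator of $\alpha\mathcal{O}+p\mathcal{O}$; it then checks that each fibre has size $p^{2}-1$. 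Your approach bypasses $A_{p}$: you go straight to the $p+1$ right ideals of norm $p$ (the same local input, just read off at the level of ideals rather than residues), and then do the orbit count inside each ideal using the freeness in the definition of a $\xi$-primary class set. The two are really the same computation---the paper's fibres $\Sigma^{-1}(\pi)=(\pi\mathcal{O}\smallsetminus p\mathcal{O})/p\mathcal{O}$ of size $p^{2}-1$ are exactly the residues lying over the ideal $\pi\mathcal{O}$---but your packaging is more structural and makes the role of the freeness hypothesis in the definition of $\mathcal{P}$ more visible.

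One small point: your justification that $\pi+\xi\mathcal{O}\in(\mathcal{O}/\xi\mathcal{O})_{r}^{\times}$ invokes that $\mathrm{Nm}(\xi)$ is a power of $2$ ``in all the cases of Table~\ref{Table_xi}'', but those entries all have $2\in\xi\mathcal{O}$, so this says nothing about case~(a). You need $(p,\mathrm{Nm}(\xi))=1$ in general for the residue of $\pi$ to land in $(\mathcal{O}/\xi\mathcal{O})_{r}^{\times}$; the paper implicitly uses the same coprimality when it asserts that the generator of $\alpha\mathcal{O}+p\mathcal{O}$ can be made $\xi$-primary, so this is a shared tacit hypothesis rather than a defect peculiar to your route.
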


\begin{proof}
	Let $c:=\#\{\pi\in\mathcal{O}\ \xi\mbox{-primary}\mid\,\mathrm{Nm}(\pi)=p\}$.
	Since $p\nmid DN$ we have an isomorphism of algebras $\mathcal{O}/p\mathcal{O}\simeq\mathrm{M}_{2}(\mathbb{Z}/p\mathbb{Z})$.  
	Therefore 
	$
	A_{p}:=\{[\alpha]\in\mathcal{O}/p\mathcal{O}\mid\,\alpha\notin p\mathcal{O},\;\mathrm{Nm}(\alpha)\equiv 0\;\mathrm{mod}\ p\}
	$
has $(p^{2}-1)(p+1)$ elements.
	We now distinguish two cases.
	\begin{enumerate}[($a$)]
	\item
	If $2\notin\xi\mathcal{O}$, we define the map
	$
	\Sigma : A_{p}\rightarrow \{\pi\in\mathcal{O}\mid\,\mathrm{Nm}(\pi)=p,\;\xi\mbox{-primary}\}
	$
	assigning to $[\alpha]\in A_{p}$ the unique quaternion $\pi_{[\alpha]}$ which is $\xi$-primary and such that $\alpha\mathcal{O}+p\mathcal{O}=\pi_{[\alpha]}\mathcal{O}$.
	\item
	If $2\in\xi\mathcal{O}$, the primary quaternion $\pi_{[\alpha]}$ defined above is unique only up to sign, so the same assignment gives a well-defined map 
	$
	\Sigma : A_{p}\rightarrow \{\pi\in\mathcal{O}\mid\,\mathrm{Nm}(\pi)=p,\;\xi\mbox{-primary}\}/\{\pm 1\},
	$
	where the quotient by $\{\pm 1\}$ denotes the equivalence relation identifying $\pi$ and $-\pi$.
	\end{enumerate}
	Finally, we will show that: 
	\begin{enumerate}[($a$)]
	\item
	if $2\notin\xi\mathcal{O}$, then $c=\# A_{p}/(p^{2}-1)=p+1$,
	\item
	if $2\in\xi\mathcal{O}$, then $c=2(\# A_{p}/(p^{2}-1))=2(p+1)$.
	\end{enumerate}
	Indeed, the map $\Sigma$ is, in both cases, a $1$ to $p^{2}-1$ correspondence. To prove this, let us see that 
	$\#\Sigma^{-1}(\Sigma([\alpha]))=p^{2}-1$. First we observe that 
	$
	\Sigma^{-1}(\pi_{[\alpha]})=\{[\alpha]\lambda\in A_{p}\mid\, \lambda\in\mathcal{O}/p\mathcal{O}\},
	$. Thus
	$
	\#(\mathcal{O}/p\mathcal{O})/\#\{[x]\in \mathcal{O}/p\mathcal{O}\mid\, \alpha x\equiv 0\;\mathrm{mod}\;\xi\mathcal{O} \}-1=p^{4}/p^{2}-1=p^{2}-1.
	$
\end{proof}

\begin{remark}
	Note that, when $D,N,H,\mathcal{O}$ and $\xi\in\mathcal{O}$ are taken as in Table \ref{Table_xi}, this result says that the number of representations 
$r(\mathrm{N}_{\mathcal{O},4},p;\xi)$ is equal to $2(p+1)$. 
	One can then generalise Hurwitz's result about the number of representations of a prime $p$ as a sum of 4-squares to the definite quaternion algebras of Table \ref{Table_xi}.
	In fact, when $D=2,\ N=1$ and $\xi = 2$, it is easy to prove that $r(\mathrm{N}_{\mathcal{O},4},p;\xi) = 4\,r(\mathrm{N}_{H,4},p) = 8(p+1),$ where $\mathrm{N}_{H,4} = \mathrm{N}_{(-1,-1),4}(X,Y,Z,T)=X^2+Y^2+Z^2+T^2.$
	Let us also consider the case $D = 3,N= 1,$ and $\xi = 2$. After Lemma \ref{lemma_D3} it is clear that $r(\mathrm{N}_{\mathcal{O},4},p;\xi) = 2\,r(\mathrm{N}_{H,4},p) = 4(p+1)$, where  $\mathrm{N}_{H,4} = \mathrm{N}_{(-1,-3),4}(X,Y,Z,T)=X^2+Y^2+3Z^2+3T^2$ (as Liouville proved in \cite{Liouville1860}).
\end{remark}

\begin{cor}\label{rank}
	Let $H$ be a definite quaternion algebra of discriminant $D$, let $\mathcal{O}=\mathcal{O}_H(N)\subseteq H$ be an Eichler order of level $N$, and let $p\nmid DN$ be an odd prime. Assume that:
\begin{enumerate}[$(i)$]
	\item 
	$h(D,N)=1$,
	\item 
	there exists $\xi\in\mathcal{O}$ that satisfies the right-unit property in $\mathcal{O}$ with $2\in\xi\mathcal{O}$ $($cf. Table \ref{Table_xi}$)$,
	\item 
	the prime $p$ satisfies the null-trace condition with respect to $\xi\mathcal{O}$ $($cf. Definition $\ref{null-trace})$.
\end{enumerate}
	Then $\Gamma_p(\xi)$ is a Schottky group of rank
	$\mathrm{rank}\,\Gamma_p(\xi)=\frac{p+1}{2}.$
\end{cor}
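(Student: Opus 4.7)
The plan is to combine Theorem \ref{Theorem_Schottky_group} with the representation count of Theorem \ref{primary_representations}; under the hypotheses $(i)$--$(iii)$ the rank computation is essentially a bookkeeping exercise.

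First I would invoke Theorem \ref{Theorem_Schottky_group} directly: assumption $(iii)$ says $t_\xi(p)=0$, which means no pure quaternion of norm $p$ is congruent to $1$ modulo $\xi\mathcal{O}$, so in the decomposition $\widetilde{S}=\Phi_p(\{\pm\alpha_1,\ldots,\pm\alpha_s,\pm\overline{\alpha}_1,\ldots,\pm\overline{\alpha}_s\})\cup\Phi_p(\{\pm\beta_1,\ldots,\pm\beta_t\})$ we have $t=0$. The only relations among the generators listed in that theorem are the involutions $[\Phi_p(\beta_i)]^2=1$ coming from pure quaternions, so with $t=0$ these disappear and $\Gamma_p(\xi)$ is freely generated by $\{[\Phi_p(\alpha_1)],\ldots,[\Phi_p(\alpha_s)]\}$. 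In particular it is a $p$-adic Schottky group of rank exactly $s$.

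Next I would count $s$. By Lemma \ref{isom}, assumption $(ii)$ implies that $\mathcal{P}=\{[1]\}$ is a $\xi$-primary class set for $\mathcal{O}$, so ``$\xi$-primary'' here means exactly ``$\equiv 1\bmod\xi\mathcal{O}$''. Since $2\in\xi\mathcal{O}$, case $(b)$ of Theorem \ref{primary_representations} gives
\[
\#\{\alpha\in\mathcal{O}\mid\mathrm{Nm}(\alpha)=p,\ \alpha\equiv 1\bmod\xi\mathcal{O}\}=2(p+1).
\]
Now $\Phi_p$ is injective, so this cardinality equals $\#\widetilde{S}$. On the other hand, by construction $\#\widetilde{S}=4s+2t$: each impure class contributes the four elements $\pm\alpha_i,\pm\overline{\alpha}_i$, which are pairwise distinct because $\mathrm{Tr}(\alpha_i)\neq 0$ prevents $\alpha_i=\pm\overline{\alpha}_i$, while each pure class contributes the two elements $\pm\beta_j$ (here one uses the observation in the Remark preceding Definition \ref{null-trace} that $2\in\xi\mathcal{O}$ forces $\pm\alpha$ and $\pm\overline{\alpha}$ to all lie in $\widetilde{S}$ whenever $\alpha$ does).

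Finally, combining $t=0$ with $4s+2t=2(p+1)$ yields $s=(p+1)/2$, and therefore $\mathrm{rank}\,\Gamma_p(\xi)=(p+1)/2$. The only step that required genuine input is Theorem \ref{primary_representations}; everything else is a direct assembly of previously established structural results. I do not foresee any real obstacle beyond verifying that the four elements $\pm\alpha_i,\pm\overline{\alpha}_i$ associated to an impure $\alpha_i$ are truly distinct modulo nothing (so that the count $4s+2t$ is correct), which follows immediately from $\mathrm{Tr}(\alpha_i)\neq 0$ together with the fact that $\mathcal{O}$ sits inside the definite algebra $H$.
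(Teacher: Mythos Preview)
Your proof is correct and follows essentially the same approach as the paper: invoke Theorem \ref{Theorem_Schottky_group} to get that $\Gamma_p(\xi)$ is Schottky of rank $s$ under the null-trace condition, then use Theorem \ref{primary_representations} (case $2\in\xi\mathcal{O}$) to count $\#\widetilde{S}=2(p+1)$ and solve $4s=2(p+1)$. The paper compresses the bookkeeping into the single line $s=\tfrac{1}{4}r(\mathrm{N}_{\mathcal{O},4},p;\xi)$, whereas you spell out explicitly why the four elements $\pm\alpha_i,\pm\overline{\alpha}_i$ are distinct, but the substance is identical.
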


\begin{proof}
	After Theorem $\ref{Theorem_Schottky_group}$
	we have that $\Gamma_{p}(\xi)$ is a Schottky group of rank $\# S=s$. Hence we need to compute $s=\frac{1}{4}r(\mathrm{N}_{\mathcal{O},4},p;\xi)$ and by Theorem \ref{primary_representations} we find $s=(p+1)/2$.
\end{proof}

\subsection{The null-trace condition}\label{null-trace_condition}
	Here we want to show that the set of primes satisfying condition ($iii$) of Corollary \ref{rank} is actually infinite, so this will yield to infinite families of Shimura curves. 
	We will take a close look at the case $D=3,N=1$ and rewrite the null-trace condition in a more amenable and arithmetical way.

\begin{lemma}\label{lemma_D3}
A quaternion $\alpha=a_{0}+a_{1}i+a_{2}j+a_{3}k\in H=\left(\frac{-1,-3}{\mathbb{Q}}\right)$ belongs to the finite set 
\[
A:=\{\alpha\in\mathcal{O}\mid\,\alpha\equiv 1\ \mathrm{mod}\ 2\mathcal{O},\;\mathrm{Nm}(\alpha)=p\}
\]
if and only if it satisfies the following conditions:
\begin{enumerate}[$(i)$]
\item
$a_{i}\in\mathbb{Z}$ for every $0\leq i\leq 3$,
\item
$a_{0}^{2}+a_{1}^{2}+3a_{2}^{2}+3a_{3}^{2}=p$,
\item
$a_{0}+a_{3}\equiv 1\;\mathrm{mod}\,2$, $a_{1}+a_{2}\equiv 0\;\mathrm{mod}\,2$.
\end{enumerate}

\end{lemma}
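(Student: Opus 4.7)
The strategy is to translate the defining conditions of $A$ into coordinate conditions on $(a_0, a_1, a_2, a_3)$ by working explicitly in the integral basis $\{1, i, \tfrac{1}{2}(i+j), \tfrac{1}{2}(1+k)\}$ of $\mathcal{O}$ given in Table \ref{Table_xi}. Any element of $\mathcal{O}$ can be written uniquely as
$$\alpha = x_0 + x_1 i + x_2 \cdot \tfrac{1}{2}(i+j) + x_3 \cdot \tfrac{1}{2}(1+k), \qquad x_0, x_1, x_2, x_3 \in \mathbb{Z},$$
and the change of basis to $\{1,i,j,k\}$ yields $a_0 = x_0 + x_3/2$, $a_1 = x_1 + x_2/2$, $a_2 = x_2/2$, $a_3 = x_3/2$.

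First I would dispose of condition (ii), which is immediate: the reduced norm on $\left(\frac{-1,-3}{\mathbb{Q}}\right)$ expressed in the basis $\{1,i,j,k\}$ is $\mathrm{Nm}(\alpha) = a_0^2 + a_1^2 + 3a_2^2 + 3a_3^2$, so $\mathrm{Nm}(\alpha)=p$ is literally (ii). Next I would rephrase the congruence $\alpha \equiv 1 \pmod{2\mathcal{O}}$ in the integral basis as $x_0 \equiv 1$ and $x_1 \equiv x_2 \equiv x_3 \equiv 0 \pmod 2$. Substituting in the change-of-basis formulas, the evenness of $x_2, x_3$ forces $a_2, a_3 \in \mathbb{Z}$; then $a_0 = x_0 + a_3$ and $a_1 = x_1 + a_2$ are automatically integers, establishing (i). Reducing modulo $2$, the parities of $x_0$ and $x_1$ translate into $a_0 + a_3 \equiv 1$ and $a_1 + a_2 \equiv 0 \pmod 2$, which is (iii).

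For the converse, the plan is to start from $a_i \in \mathbb{Z}$ satisfying (iii) and define $x_2 := 2a_2$, $x_3 := 2a_3$, $x_1 := a_1 - a_2$, $x_0 := a_0 - a_3$. All four are integers; $x_2, x_3$ are even by construction, the hypothesis $a_1 + a_2 \equiv 0 \pmod 2$ forces $x_1$ even, and $a_0 + a_3 \equiv 1 \pmod 2$ forces $x_0$ odd. This exhibits $\alpha$ as an element of $\mathcal{O}$ with $\alpha - 1 \in 2\mathcal{O}$, and combined with (ii) this places $\alpha$ in $A$. The argument is a routine change-of-basis computation followed by bookkeeping modulo $2$; no real obstacle is expected, the only delicate point being the need to match the integral basis of $\mathcal{O}$ with the standard basis of $H$ consistently and to keep track of both sets of parity conditions simultaneously.
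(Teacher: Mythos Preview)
Your proposal is correct and follows essentially the same approach as the paper: both arguments write $\alpha$ in the integral basis $\{1,i,\tfrac{1}{2}(i+j),\tfrac{1}{2}(1+k)\}$ of $\mathcal{O}$, read off the change-of-basis relations $a_0=x_0+x_3/2$, $a_1=x_1+x_2/2$, $a_2=x_2/2$, $a_3=x_3/2$, and translate the congruence $\alpha\equiv 1\bmod 2\mathcal{O}$ into the stated parity conditions on the $a_i$. Your treatment of the converse is in fact more explicit than the paper's, which simply asserts that it is ``clearly true''.
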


\begin{proof}
Let $H=\left(\frac{-1,-3}{\mathbb{Q}}\right)$ be the definite quaternion algebra of discriminant $D=3$ and consider the maximal order $\mathcal{O}\subseteq H$ with basis $\{1,i,\lambda,\mu\}$, where $\lambda:=(i+j)/2$ and $\mu:=(1+k)/2$. If we take $\alpha=a_0+a_1i+a_2j+a_3k\in A$ and express it in the integral basis of $\mathcal{O}$, namely $\alpha=x+yi+z\lambda+t\mu$, we obtain the following relations:
	$$a_0=x+t/2,\quad a_1=y+z/2,\quad a_2=z/2,\quad a_3=t/2.$$
	The condition $\alpha\equiv1\mbox { mod }2$ tells us that $z\equiv t\equiv 0\mbox{ mod }2$, so $a_i\in\mathbb{Z}$ for every $0\leq i\leq 3$. Moreover we also have $a_0+a_3=x+t\equiv 1\mbox{ mod }2$ and $a_1+a_2=y+z\equiv0\mbox{ mod }2$, so we find the conditions of the statement on the coefficients $a_{i}$. The converse is clearly true.
\end{proof}

\begin{prop}
Let $H$ be the definite quaternion algebra of discriminant $D=3$ and $\mathcal{O}\subseteq H$ a maximal order. With the notations from Theorem $\ref{Theorem_Schottky_group}$ we have that, if $p\equiv 1\,\mathrm{mod}\,4$, then 
\[
t_{2}(p):=\#\{\alpha\in\mathcal{O}\mid\,\alpha\equiv 1\ \mathrm{mod}\ 2\mathcal{O},\;\mathrm{Nm}(\alpha)=p,\,\mathrm{Tr}(\alpha)=0\}=0.
\] 
\end{prop}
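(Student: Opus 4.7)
The strategy is to translate the three conditions defining the set $t_2(p)$ into arithmetic constraints on the integer coordinates of $\alpha$ via Lemma \ref{lemma_D3}, and then derive a contradiction modulo $4$ whenever $p\equiv 1\,\mathrm{mod}\,4$.

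First I would write $\alpha=a_{0}+a_{1}i+a_{2}j+a_{3}k\in\mathcal{O}$ and apply Lemma \ref{lemma_D3}, which tells us that $\alpha\equiv 1\,\mathrm{mod}\,2\mathcal{O}$ with $\mathrm{Nm}(\alpha)=p$ forces $a_{i}\in\mathbb{Z}$, $a_{0}^{2}+a_{1}^{2}+3a_{2}^{2}+3a_{3}^{2}=p$, and the parity conditions $a_{0}+a_{3}\equiv 1\,\mathrm{mod}\,2$, $a_{1}+a_{2}\equiv 0\,\mathrm{mod}\,2$. The extra null-trace condition $\mathrm{Tr}(\alpha)=2a_{0}=0$ immediately gives $a_{0}=0$, hence $a_{3}$ must be odd and $a_{1},a_{2}$ have the same parity. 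So any element counted by $t_{2}(p)$ satisfies
\[
a_{1}^{2}+3a_{2}^{2}+3a_{3}^{2}=p,\qquad a_{3}\text{ odd},\qquad a_{1}\equiv a_{2}\,\mathrm{mod}\,2.
\]

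Next I would reduce modulo $4$ in the two possible parity cases. Since $a_{3}$ is odd, $3a_{3}^{2}\equiv 3\,\mathrm{mod}\,4$. If $a_{1}$ and $a_{2}$ are both even, then $a_{1}^{2}+3a_{2}^{2}\equiv 0\,\mathrm{mod}\,4$, so $p\equiv 3\,\mathrm{mod}\,4$. If instead $a_{1}$ and $a_{2}$ are both odd, then $a_{1}^{2}\equiv 1$ and $3a_{2}^{2}\equiv 3\,\mathrm{mod}\,4$, giving $a_{1}^{2}+3a_{2}^{2}\equiv 0\,\mathrm{mod}\,4$, and again $p\equiv 3\,\mathrm{mod}\,4$. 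In either case we obtain $p\equiv 3\,\mathrm{mod}\,4$.

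Contrapositively, if $p\equiv 1\,\mathrm{mod}\,4$, no quadruple $(0,a_{1},a_{2},a_{3})$ can satisfy the required equation together with the parity constraints, so the set counted by $t_{2}(p)$ is empty. This is essentially a bookkeeping argument; the only mild subtlety is making sure both parity cases for $(a_{1},a_{2})$ produce the same residue of $p$ modulo $4$, but both cases reduce to $a_{1}^{2}+3a_{2}^{2}\equiv 0\,\mathrm{mod}\,4$, so there is no real obstacle. I expect the whole proof to fit in a few lines.
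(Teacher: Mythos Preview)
Your proof is correct and follows essentially the same route as the paper: apply Lemma~\ref{lemma_D3} to obtain integer coordinates with the parity constraints, set $a_0=0$ from the null-trace condition, and reduce the norm equation modulo~$4$. The only cosmetic difference is that the paper handles both parity cases for $(a_1,a_2)$ at once by writing $a_1^2+3a_2^2\equiv 4a_2^2\pmod 4$, whereas you split into the two cases explicitly.
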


\begin{proof}	
	Suppose that there exists a pure quaternion $\alpha\in\mathcal{O}$ of norm $p$ such that $\alpha\equiv 1\,\mathrm{mod}\,2\mathcal{O}$. Then the previous conditions on the coefficients $a_i\in\mathbb{Z}$ translate into
	$a_3\equiv1\mbox{ mod }2,\  a_1+a_2\equiv0\mbox{ mod }2$ and $a_1^2+3a_2^2+3a_3^2=p.$
	If we now reduce modulo 4 we obtain
	$p=a_1^2+3a_2^2+3a_3^2\equiv 4a_2^2+3\equiv 3\;\mathrm{mod}\,4.$
\end{proof}

Similarly one can prove the following result for the Hurwitz quaternions (\cite[Ch. IX]{Gerritzen_vanderPut1980}).

\begin{prop}
Let $H$ be the definite quaternion algebra of discriminant $D=2$ and let $\mathcal{O}$ be a maximal order in $H$. If $p\equiv 1\,\mathrm{mod}\,4$, then $t_2(p)=0$.
\end{prop}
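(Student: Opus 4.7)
The plan is to adapt verbatim the argument given above for $D=3$ to the Hurwitz quaternions. Fix the presentation $H = \left(\frac{-1,-1}{\mathbb{Q}}\right)$ and take $\mathcal{O} = \mathbb{Z}\left[1, i, j, \omega\right]$ with $\omega := \frac{1}{2}(1+i+j+k)$ (cf. Table \ref{Table_xi}). For a quaternion $\alpha = x + yi + zj + t\omega \in \mathcal{O}$ with $(x,y,z,t) \in \mathbb{Z}^4$, expanding $\omega$ gives $\alpha = a_0 + a_1 i + a_2 j + a_3 k$ with
\[
a_0 = x + t/2, \qquad a_1 = y + t/2, \qquad a_2 = z + t/2, \qquad a_3 = t/2,
\]
so that $\mathrm{Nm}(\alpha) = a_0^2 + a_1^2 + a_2^2 + a_3^2$ and $\mathrm{Tr}(\alpha) = 2a_0$.

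The first step is to translate the congruence $\alpha \equiv 1\;\mathrm{mod}\;2\mathcal{O}$ into parity conditions on the standard coordinates $a_i$. In the integral basis it reads $x \equiv 1\;\mathrm{mod}\;2$ and $y \equiv z \equiv t \equiv 0\;\mathrm{mod}\;2$. Since $t$ is then even, $a_3 = t/2 \in \mathbb{Z}$, and consequently $a_0, a_1, a_2 \in \mathbb{Z}$ as well, with
\[
a_0 + a_3 \equiv 1\;\mathrm{mod}\;2, \qquad a_1 \equiv a_3\;\mathrm{mod}\;2, \qquad a_2 \equiv a_3\;\mathrm{mod}\;2.
\]

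Next I would impose the null-trace condition $\mathrm{Tr}(\alpha)=0$, i.e.\ $a_0 = 0$. Then $a_0 + a_3 \equiv 1\;\mathrm{mod}\;2$ forces $a_3$ to be odd, and the remaining two conditions force $a_1$ and $a_2$ odd as well. Reducing the norm equation $a_1^2 + a_2^2 + a_3^2 = p$ modulo $4$ and using that any odd square is $\equiv 1\;\mathrm{mod}\;4$ yields $p \equiv 1+1+1 \equiv 3\;\mathrm{mod}\;4$, contradicting the hypothesis $p \equiv 1\;\mathrm{mod}\;4$. Hence no such $\alpha$ exists and $t_2(p) = 0$.

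The only step that needs care is unpacking the congruence $\alpha \equiv 1\;\mathrm{mod}\;2\mathcal{O}$ into conditions on $(a_0, a_1, a_2, a_3)$; this is the $D=2$ analogue of Lemma \ref{lemma_D3}, slightly less symmetric than in the $D=3$ case because every $a_i$ receives a $t/2$ shift from $\omega$. Once this bookkeeping is in place, the proof is a one-line parity observation modulo $4$.
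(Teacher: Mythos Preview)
Your proof is correct and follows exactly the approach the paper indicates: the paper does not spell out a proof here but simply says ``Similarly one can prove the following result for the Hurwitz quaternions,'' referring back to the $D=3$ argument (and to \cite[Ch.~IX]{Gerritzen_vanderPut1980}). Your translation of $\alpha\equiv 1\ \mathrm{mod}\ 2\mathcal{O}$ into the parity conditions $a_{0}\not\equiv a_{3}$, $a_{1}\equiv a_{3}$, $a_{2}\equiv a_{3}\ \mathrm{mod}\ 2$ with all $a_i\in\mathbb{Z}$ is the Hurwitz analogue of Lemma~\ref{lemma_D3}, and the mod~$4$ conclusion is precisely the intended one-line computation.
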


\subsection{$p$-adic fundamental domains and their reduction-graphs}

In this section we describe how to construct good fundamental domains for Mumford curves uniformised by the Schottky groups of the previous section.
By \cite[Satz 1]{Gerritzen1974} we know that one can always find a system of generators $S=\{\gamma_{1},\dots,\gamma_{g}\}$ for a Schottky group $\Gamma$ such that a good fundamental domain for $\Gamma$ with respect to $S$ exists (cf. Definition \ref{dom_fund_grupos_Schottky}). One says that a system of generators of this kind is \emph{in good position}. Finding generators in good position for a given Schottky group is an important problem, which is computationally solved in \cite{MorrisonRen2015}. 

In the next result we give sufficient conditions for a system of generators of a Schottky group to be in good position. This applies to the generators arising from Theorem \ref{Theorem_Schottky_group} and will allow us to describe the stable reduction-graph of Mumford curves covering $p$-adic Shimura curves.

\begin{prop}\label{prop_dom_fund}
Let $\Gamma\subseteq\mathrm{PGL}_{2}(\mathbb{Q}_{p})$ be a cocompact Schottky group of rank $g$, and let $S=\{\gamma_{1},\dots,\gamma_{g}\}$ be a system of generators. Assume that:
\begin{enumerate}[$(a)$]
\item
The rank of $\Gamma$ is $g=(p+1)/2$.
\item
$\mathrm{Det}(\gamma_{i})=p\mathbb{Q}_{p}^{\times2}$ for every $i=1,\dots,g$.
\end{enumerate}
Then
$\mathbb{P}^{1}(\mathbb{C}_{p})\smallsetminus\left(\cup_{a=0}^{p-1}\mathbb{B}^{-}(a,1/\sqrt{p})\cup \mathbb{B}^{-}(\infty,1/\sqrt{p})\right)$
is a good fundamental domain for the action of $\Gamma$ with respect to $S$.
\end{prop}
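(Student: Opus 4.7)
The plan is to verify the three conditions of Definition \ref{dom_fund_grupos_Schottky} in turn. Condition (i) holds because $\mathbb{P}^{1}(\mathbb{F}_p) \subseteq \mathbb{P}^{1}(\mathbb{Q}_p)$. For condition (ii), any two distinct residue classes $a, a' \in \mathbb{P}^{1}(\mathbb{F}_p)$ have projective distance $1 > |p|^{1/2}$, so Lemma \ref{cns_bolas_disjuntas}(a) yields the disjointness of the corresponding closed balls of radius $|p|^{1/2}$.

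The bulk of the proof verifies (iii) via the non-archimedean method of isometry circles (cf.\ \cite[Ch.~I]{Gerritzen_vanderPut1980}). Using hypothesis (b), I would choose for each generator $\gamma_i$ a primitive representative $M_i \in \mathrm{M}_2(\mathbb{Z}_p)$ with $\det M_i = p$, with lower entries $c_i, d_i$ and upper entries $a_i, b_i$. The isometry circle of $\gamma_i$ then has radius $|p|^{1/2}/|c_i|$ and centre $-d_i/c_i \in \mathbb{P}^{1}(\mathbb{Q}_p)$. A short case analysis using the identity $a_i d_i - b_i c_i = p$ together with the primitivity of $M_i$ forces the radius to be exactly $|p|^{1/2}$; the centre then reduces modulo $p$ to a well-defined element of $\mathbb{P}^{1}(\mathbb{F}_p)$, with the degenerate case $c_i \in p\mathbb{Z}_p$ (corresponding to a fixed point at $\infty$) handled by passing to the chart $w = 1/z$. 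Applying the same procedure to the inverse representative yields a second isometry ball of radius $|p|^{1/2}$ centred in $\mathbb{P}^{1}(\mathbb{F}_p)$, producing in total $2g$ such balls with centres $\alpha_i^{\pm}$.

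Next I would show that the $2g = p+1$ centres are pairwise distinct and hence exhaust $\mathbb{P}^{1}(\mathbb{F}_p)$. Via the identification $\mathrm{PGL}_2(\mathbb{Q}_p)/\mathrm{PGL}_2(\mathbb{Z}_p) \simeq \mathrm{Ver}(\mathcal{T}_p)$, the centre $\alpha_i^{\epsilon}$ corresponds to the vertex $\gamma_i^{\epsilon} v^0$ neighbouring $v^0$; hence $\alpha_i^{\epsilon} = \alpha_j^{\delta}$ would force $\gamma_i^{-\epsilon}\gamma_j^{\delta}$ to fix $v^0$, i.e.\ to lie in $\mathrm{PGL}_2(\mathbb{Z}_p)$. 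Since $\Gamma$ is discrete and torsion-free (being Schottky) one has $\Gamma \cap \mathrm{PGL}_2(\mathbb{Z}_p) = \{1\}$, and the freeness of $\Gamma$ on the basis $S$ then rules out any nontrivial identification. By hypothesis (a) the $p+1$ distinct centres exhaust $\mathbb{P}^{1}(\mathbb{F}_p)$, and the pairing $\alpha_i^- \leftrightarrow \alpha_i^+$ required by (iii) follows from the standard property of isometry circles: $\gamma_i$ maps $\mathbb{P}^{1}(\mathbb{C}_p) \smallsetminus \mathbb{B}^{-}(\alpha_i^-, |p|^{1/2})$ onto $\mathbb{B}^{+}(\alpha_i^+, |p|^{1/2})$, and analogously with open/closed roles exchanged.

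The main obstacle I anticipate is the case analysis showing that the isometry circle of $\gamma_i$ is uniformly of radius exactly $|p|^{1/2}$ and has centre in $\mathbb{P}^{1}(\mathbb{F}_p)$; in particular the edge case $c_i \in p\mathbb{Z}_p$ requires handling the fixed-point-at-infinity configuration via a coordinate change. Once that is settled, the combinatorial input from the tree (freeness of the Schottky action plus the rank count $2g = p+1$) and Ford's formalism combine formally to yield the stated good fundamental domain.
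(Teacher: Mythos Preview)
Your argument is correct and lands on the same key step as the paper---showing that the $2g=p+1$ relevant balls are pairwise disjoint by arguing that a coincidence would force some word $\gamma_i^{-\epsilon}\gamma_j^{\delta}$ to stabilise $v^{0}$, hence to lie in the finite group $\Gamma\cap\mathrm{PGL}_2(\mathbb{Z}_p)=\{1\}$---but the packaging is genuinely different. The paper centres its balls at the \emph{fixed points} $\alpha_i,\alpha_{i+g}$ of $\gamma_i$, invokes the reduction map (Theorem~\ref{red_map}) to transport the ball identifications to edge identifications $\gamma_i\cdot e^{(1)}_{\tilde\alpha_i}=-e^{(1)}_{\tilde\alpha_{i+g}}$ in $\mathcal{T}_p$, and only at the very end uses Lemma~\ref{cns_bolas_disjuntas} to identify the resulting domain with the one centred on $\mathbb{P}^1(\mathbb{F}_p)$. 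You instead work from the outset with the balls centred on $\mathbb{P}^1(\mathbb{F}_p)$, so conditions (i) and (ii) are immediate, and you discharge (iii) via the isometry-circle formalism, computing the poles $-d_i/c_i$ and their radii explicitly. Your route is more hands-on and makes transparent exactly where hypothesis~(b) enters (the radius computation), at the cost of the case analysis you flag; the paper's route is more tree-theoretic and avoids coordinates, at the cost of being somewhat terse about why $\gamma_i\cdot v^0$ is actually \emph{adjacent} to $v^0$. On that last point: both arguments tacitly need the primitive $\mathbb{Z}_p$-representative of $\gamma_i$ to have determinant exactly $p$ (equivalently, $d(v^0,\gamma_i v^0)=1$), not merely $\det\in p\,\mathbb{Q}_p^{\times 2}$; you are right to single this out as the main obstacle, and in the paper's intended application (generators coming from quaternions of norm $p$) it holds on the nose.
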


\begin{proof}
For every $i=1,\dots,g$, denote by $\alpha_{i},\alpha_{i+g}\in\mathbb{P}^{1}(\mathbb{Q}_{p})$ the two fixed points of the transformations $\gamma_{i},\gamma_{i}^{-1}$, and define the following admissible subdomain of $\mathcal{H}_{p}$:
\[
\mathcal{F}_{\Gamma}:=\mathbb{P}^{1}(\mathbb{C}_{p})\smallsetminus\bigcup_{i=1}^{2g}\mathbb{B}^{-}\left(\alpha_{i},1/\sqrt{p}\right).
\]
Condition ($i$) of Definition \ref{dom_fund_grupos_Schottky} is clearly satisfied, since the transformations $\gamma_{i}$ are hyperbolic, so their fixed points are in $\mathbb{P}^{1}(\mathbb{Q}_{p})$. Moreover, since these points satisfy that $\alpha_{i}=\lim_{n\rightarrow\infty}(\gamma_{i}^{n}\cdot v^{0})$ and $\alpha_{i+g}=\lim_{n \rightarrow\infty}(\gamma_{i}^{-n}\cdot v^{0})$, we have
$
\gamma_{i}\cdot e^{(1)}_{\widetilde{\alpha}_{i}}=-e_{\widetilde{\alpha}_{i+g}}^{(1)},
$
and using the reduction map of Theorem \ref{red_map} we find that condition ($iii$) is also satisfied. Thus we only need to check condition ($ii$).
	So suppose that there exist $i,j\in\{1,\dots,(p+1)/2\}, i\neq j,$ such that 
$\mathbb{B}^{+}(\alpha_{i},1/\sqrt{p})\cap \mathbb{B}^{+}(\alpha_{j},1/\sqrt{p})\neq \emptyset.$
	Since these two balls have the same radius we obtain $\mathbb{B}^{+}(\alpha_{i},1/\sqrt{p})= \mathbb{B}^{+}(\alpha_{j},1/\sqrt{p})$. 
Let $\widetilde{\alpha}_{i},\widetilde{\alpha}_{j}\in\mathbb{P}^{1}(\mathbb{F}_{p})$ be the corresponding reductions mod $p$.
Applying the reduction map of Theorem \ref{red_map} and the fact that this is equivariant with respect to the action of $\mathrm{PGL}_{2}(\mathbb{Q}_{p})$, we have $\gamma_{i}\cdot e^{(1)}_{\widetilde{\alpha}_{i}}=-e^{(1)}_{\widetilde{\alpha}_{i+g}}$ and $\gamma_{j}\cdot e^{(1)}_{\widetilde{\alpha}_{j}}=-e^{(1)}_{\widetilde{\alpha}_{j+g}}$.
	Therefore $\gamma_{i}\gamma_{j}^{-1}\cdot(-e^{(1)}_{\widetilde{\alpha}_{j+g}})=-e^{(1)}_{\widetilde{\alpha}_{i+g}}$, and the transformation $\gamma_{i}\gamma_{j}^{-1}$ fixes a vertex of $\mathcal{T}_{p}$, which is a contradiction since the group $\Gamma_{p}$ is torsion-free.

Finally the subdomain $\mathcal{F}_{\Gamma}$ is a good fundamental domain for $\Gamma$ with respect to the system of generators $S$ and, by Lemma \ref{cns_bolas_disjuntas} ($b$), this is equal to the subdomain of the statement.
\end{proof} 

\begin{teo}\label{Theorem_Mumford_curve}
Let $H$ be a definite quaternion algebra of discriminant $D$, $\mathcal{O}\subseteq H$ an Eichler order of level $N$, and $p\nmid DN$ an odd prime. Under the assumptions of Corollary $\ref{rank}$, we have:
\begin{enumerate}[$(a)$]
\item
The group $\Gamma_{p}(\xi)$ is a Schottky group of rank $(p+1)/2$ generated by the transformations in $\mathrm{PGL}_{2}(\mathbb{Q}_{p})$ represented by the matrices in
$\widetilde{S}:=\Phi_{p}(\{\alpha\in\mathcal{O}\mid\,\mathrm{Nm}(\alpha)=p,\alpha \equiv 1\;\mathrm{mod}\;\xi\mathcal{O}\}).$
\item
A good fundamental domain for the action of $\Gamma_{p}(\xi)$ with respect to the system of generators $\widetilde{S}$ is the admissible subdomain of $\mathcal{H}_{p}$
\[
\mathcal{F}_{p}(\xi):=\mathbb{P}^{1,rig}(\mathbb{C}_{p})\smallsetminus \bigcup_{a\in\{0,\dots,p-1,\infty\}}\mathbb{B}^{-}(a,1/\sqrt{p}).
\]
\item
If $X_{p}(\xi)$ denotes the Mumford curve associated with $\Gamma_{p}(\xi)$, the rigid analytic curve $X_{p}(\xi)^{rig}$ is obtained from $\mathcal{F}_{p}(\xi)$ with the following pair-wise identifications:
for every $\gamma\in\widetilde{S}$,
\[
\gamma\left(\mathbb{P}^{1}(\mathbb{C}_{p})\smallsetminus\mathbb{B}^{-}(\tilde{\alpha}_{\gamma},1/\sqrt{p})\right)=\mathbb{B}^{+}(\tilde{\alpha}_{\gamma^{-1}},1/\sqrt{p}),
\]
\[
\gamma\left(\mathbb{P}^{1}(\mathbb{C}_{p})\smallsetminus\mathbb{B}^{+}(\tilde{\alpha}_{\gamma},1/\sqrt{p})\right)=\mathbb{B}^{-}(\tilde{\alpha}_{\gamma^{-1}},1/\sqrt{p}),
\]
where $\widetilde{\alpha}_{\gamma}$ and $\widetilde{\alpha}_{\gamma^{-1}}$ are defined as the reduction in $\mathbb{P}^{1}(\mathbb{F}_{p})$ of the fixed points of $\{\gamma,\gamma^{-1}\}$.
\item
The stable reduction-graph of $X_{p}(\xi)$ is the quotient of the open subtree of $\mathcal{T}_p$ given by $\mathcal{T}_{p}^{(1)}\smallsetminus\{v^{(1)}_{0},\dots,v^{(1)}_{p-1},v^{(1)}_{\infty}\}$ via the pair-wise identification of the $p+1$ oriented edges given by $\gamma e_{\widetilde{\alpha}_{\gamma}}=-e_{\widetilde{\alpha}_{\gamma^{-1}}}$, for every $\gamma\in \widetilde{S}$.
\end{enumerate}
\end{teo}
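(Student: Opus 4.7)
The plan is to deduce the four statements by combining the structural results already established (Theorem \ref{Theorem_Schottky_group} and Corollary \ref{rank}) with Proposition \ref{prop_dom_fund}, and then transferring the analytic picture to the Bruhat-Tits tree via the reduction map of Theorem \ref{red_map}. Part $(a)$ is essentially immediate: Theorem \ref{Theorem_Schottky_group} together with the null-trace hypothesis $(iii)$ of Corollary \ref{rank} gives that $\Gamma_{p}(\xi)$ is a Schottky group with the prescribed generating set $\widetilde{S}$, and Corollary \ref{rank} computes its rank to be $(p+1)/2$.

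For part $(b)$ I would apply Proposition \ref{prop_dom_fund} to $\Gamma=\Gamma_{p}(\xi)$ and the system $\widetilde{S}$. The two hypotheses of that proposition must be checked: the rank condition is already supplied by $(a)$, and the determinant condition follows because any $\gamma=\Phi_{p}(\alpha)\in\widetilde{S}$ has $\mathrm{Det}(\gamma)=\mathrm{Nm}(\alpha)=p$, so $\mathrm{Det}(\gamma)\in p\mathbb{Q}_{p}^{\times 2}$. Cocompactness of $\Gamma_{p}(\xi)$, needed implicitly for the Mumford curve to have good reduction, comes from the fact that $\Gamma_{p}(\xi)$ is of finite index in the cocompact group $\Gamma_{p}$ of the \v{C}erednik-Drinfel'd Theorem \ref{CD}. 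This yields the explicit fundamental domain $\mathcal{F}_{p}(\xi)$.

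Part $(c)$ is then just a rephrasing of Definition \ref{dom_fund_grupos_Schottky}$(iii)$ applied to the good fundamental domain from $(b)$: by Mumford's theorem the rigid curve $X_{p}(\xi)^{rig}=\Gamma_{p}(\xi)\backslash\mathcal{H}_{\Gamma_{p}(\xi)}$ is obtained from $\mathcal{F}_{p}(\xi)$ by identifying its $2\cdot(p+1)/2=p+1$ boundary circles in pairs according to the action of the generators $\gamma\in\widetilde{S}$ and their inverses, and the identifications take precisely the form stated. The reductions $\widetilde{\alpha}_{\gamma},\widetilde{\alpha}_{\gamma^{-1}}\in\mathbb{P}^{1}(\mathbb{F}_{p})$ of the fixed points of $\gamma,\gamma^{-1}$ are well-defined because those fixed points lie in $\mathbb{P}^{1}(\mathbb{Q}_{p})$ (hyperbolicity) and because, as in the proof of Proposition \ref{prop_dom_fund}, the $p+1$ removed open balls $\mathbb{B}^{-}(a,1/\sqrt{p})$ with $a\in\{0,1,\dots,p-1,\infty\}$ are in bijection with the fixed points of the $2\cdot\mathrm{rank}(\Gamma_{p}(\xi))=p+1$ hyperbolic elements $\{\gamma,\gamma^{-1}:\gamma\in\widetilde{S}\}$.

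For part $(d)$ I would push the analytic description through the reduction map of Theorem \ref{red_map}. By the admissibility of $\mathcal{F}_{p}(\xi)$ and the formula $\mathcal{H}_{p}^{(i)}=\mathrm{Red}^{-1}(\mathcal{T}_{p,\mathbb{Q}}^{(i-1)})$, the image $\mathrm{Red}(\mathcal{F}_{p}(\xi))$ is precisely the rational geometric realisation of the open subtree $\mathcal{T}_{p}^{(1)}\smallsetminus\{v_{0}^{(1)},\dots,v_{p-1}^{(1)},v_{\infty}^{(1)}\}$, since removing the open ball $\mathbb{B}^{-}(a,1/\sqrt{p})$ on the rigid side corresponds exactly to removing the leaf vertex $v_{a}^{(1)}$ together with the open half-edge pointing into it. The boundary annulus between $v^{0}$ and $v_{a}^{(1)}$ becomes an oriented edge $e_{\widetilde{a}}$ of $\mathcal{T}_{p}$, and the equivariance of $\mathrm{Red}$ under $\mathrm{PGL}_{2}(\mathbb{Q}_{p})$ translates each analytic identification of part $(c)$ into the edge identification $\gamma\cdot e_{\widetilde{\alpha}_{\gamma}}=-e_{\widetilde{\alpha}_{\gamma^{-1}}}$. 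The resulting quotient is, by Mumford's theorem, the dual graph of the stable reduction of $X_{p}(\xi)$. The step that requires the most care is this transfer: namely, verifying that the reduction identifies the open balls centred at $\{0,\dots,p-1,\infty\}$ with the leaves of $\mathcal{T}_{p}^{(1)}$ in the claimed way and that the gluing of annular neighbourhoods is consistent with the equivariance of $\mathrm{Red}$; once this geometric dictionary is in place the four conclusions all fall out simultaneously.
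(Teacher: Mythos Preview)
Your proposal is correct and follows essentially the same approach as the paper: part $(a)$ via Theorem \ref{Theorem_Schottky_group} and Corollary \ref{rank}, part $(b)$ by verifying the hypotheses of Proposition \ref{prop_dom_fund}, part $(c)$ from Definition \ref{dom_fund_grupos_Schottky}$(iii)$, and part $(d)$ by pushing through the reduction map of Theorem \ref{red_map}. The paper's proof is more terse, but your added verifications (the determinant check $\mathrm{Det}(\Phi_p(\alpha))=\mathrm{Nm}(\alpha)=p$, the cocompactness via finite index in $\Gamma_p$, and the explicit dictionary between removed balls and leaf vertices) are exactly the details one would need to fill in.
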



\begin{proof}
By Theorem \ref{Theorem_Schottky_group} we know that $\Gamma_{p}(\xi)$ is a Schottky group and by Corollary \ref{rank} it has rank $(p+1)/2$. Therefore (a) follows. Now we are in the hypothesis of Proposition \ref{prop_dom_fund} and ($b$) follows. By the theory of Mumford curves (cf. section \ref{Sec1}), there exists a projective curve $X_{p}(\xi)$ over $\mathbb{Q}_{p}$ such that its rigidification is
$
\Gamma_{p}(\xi)\backslash\mathcal{F}_{p}(\xi) \simeq \Gamma_{p}(\xi)\backslash\mathcal{H}_{p}\simeq X_{p}(\xi)^{rig},
$
and by Definition \ref{dom_fund_grupos_Schottky} we have the identifications of ($c$).
Finally, using the reduction map of Theorem \ref{red_map} we find that $\mathcal{F}_{p}(\xi)$ together with its identifications reduces to a rose-graph with $(p+1)/2$ petals.
\end{proof}

\begin{cor}\label{Corollary}
Consider the Shimura curve $X(Dp,N)$ of discriminant $Dp$, with $(D,N)=1$ and $p\nmid DN$, associated with an Eichler order of level $N$ in the indefinite quaternion algebra of discriminant $Dp$. Let $H$ be the definite quaternion algebra of discriminant $D$ and $\mathcal{O}=\mathcal{O}_H(N)\subset H$ an Eichler order of level $N$. 
If $\mathcal{O}$ satisfies the conditions of Theorem $\ref{Theorem_Mumford_curve}$, then there exists a Mumford curve of genus $(p+1)/2$ covering the $p$-adic rigid Shimura curve $(X(Dp,N)\otimes_{\mathbb{Q}}\mathbb{Q}_{p})^{rig}$ and the degree of the cover is equal to the order of the group $\mathcal{O}^{\times}/\mathbb{Z}^{\times}$.
\end{cor}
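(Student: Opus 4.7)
The plan is to assemble the corollary from the results already in place. By Theorem \ref{Theorem_Mumford_curve}(a) and (c), under the stated hypotheses there exists a Mumford curve $X_p(\xi)$ over $\mathbb{Q}_p$ with rigidification $X_p(\xi)^{rig} \simeq \Gamma_p(\xi) \backslash \mathcal{H}_p$. Since $\Gamma_p(\xi)$ is a Schottky group of rank $(p+1)/2$ by Corollary \ref{rank}, the associated Mumford curve $X_p(\xi)$ has genus $(p+1)/2$, which settles the genus claim.

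Next I would produce the covering map. By the \v{C}erednik-Drinfel'd Theorem \ref{CD} the rigid Shimura curve is isomorphic to $\Gamma_p \backslash (\mathcal{H}_p \otimes_{\mathbb{Q}_p} \mathbb{Q}_{p^2})$ over $\mathbb{Q}_p$. Since $\Gamma_p(\xi)$ is normal in $\Gamma_p$ (it is the kernel of the surjection $\pi$ appearing in Lemma \ref{exact_seq}), the quotient map factors through the intermediate quotient $\Gamma_p(\xi) \backslash (\mathcal{H}_p \otimes_{\mathbb{Q}_p} \mathbb{Q}_{p^2})$, which coincides with the base change $X_p(\xi)^{rig} \otimes_{\mathbb{Q}_p} \mathbb{Q}_{p^2}$. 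The residual action of the finite group $\Gamma_p / \Gamma_p(\xi)$ then produces the desired finite covering of the Shimura curve by $X_p(\xi)$.

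The degree of this covering is $[\Gamma_p : \Gamma_p(\xi)]$. The split short exact sequence of Lemma \ref{exact_seq} identifies this index with $\#(\mathcal{O}/\xi\mathcal{O})_r^\times$, and the right-unit property for $\xi$ with $2 \in \xi\mathcal{O}$ (Definition \ref{units_property}) provides the bijection $\mathcal{O}^\times/\mathbb{Z}^\times \simeq (\mathcal{O}/\xi\mathcal{O})_r^\times$; hence the degree equals $\#\mathcal{O}^\times/\mathbb{Z}^\times$ as claimed.

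The main technical point I expect to handle carefully is the field of definition of the cover. The Mumford curve $X_p(\xi)$ is naturally defined over $\mathbb{Q}_p$, whereas the Shimura curve is a quadratic twist over $\mathbb{Q}_{p^2}/\mathbb{Q}_p$ of $\Gamma_{p,+} \backslash \mathcal{H}_p$, and the \v{C}erednik-Drinfel'd isomorphism involves $\mathcal{H}_p$ base-changed to $\mathbb{Q}_{p^2}$. The normality of $\Gamma_p(\xi)$ in $\Gamma_p$ guarantees that the $\Gamma_p$-action, which combines M\"obius transformations on $\mathcal{H}_p$ with the nontrivial Galois automorphism of $\mathbb{Q}_{p^2}/\mathbb{Q}_p$ on the factor $\mathbb{Q}_{p^2}$, descends the covering to $\mathbb{Q}_p$ with the correct degree. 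Verifying this descent is the only step that is not purely formal; everything else is direct bookkeeping from the previous sections.
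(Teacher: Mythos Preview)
Your proposal is correct and follows essentially the same approach as the paper: identify $X_p(\xi)$ as a Mumford curve of genus $(p+1)/2$ via Theorem~\ref{Theorem_Mumford_curve} and Corollary~\ref{rank}, use the normality of $\Gamma_p(\xi)\trianglelefteq\Gamma_p$ to obtain the finite cover, compute the index as $\#\mathcal{O}^\times/\mathbb{Z}^\times$, and invoke Theorem~\ref{CD} to identify the target with the rigid Shimura curve. The paper's own proof is in fact terser than yours and cites Theorem~\ref{Theorem_Schottky_group} directly for the index rather than routing through Lemma~\ref{exact_seq}; your explicit discussion of the $\mathbb{Q}_{p^2}$ descent is more careful than what the paper records.
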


\begin{proof}
	Using Theorem \ref{Theorem_Schottky_group} we find a Schottky group $\Gamma_{p}(\xi)$ which is a normal subgroup of $\Gamma_{p}$ of index $\#\Gamma_{p}/\Gamma_{p}(\xi)=\#\mathcal{O}^{\times}/\mathbb{Z}^{\times}$. 
	Therefore, there exists a rigid analytic curve isomorphic to $\Gamma_{p}\backslash\mathcal{H}_{p}$ which is a finite quotient of the rigidification of the Mumford curve $X_{p}(\xi)$ described in Theorem \ref{Theorem_Mumford_curve}. 
	Moreover, by Theorem \ref{CD} we have that $(X(D,N)\otimes_{\mathbb{Q}}\mathbb{Q}_{p})^{rig}\simeq \Gamma_{p}\backslash(\mathcal{H}_{p}\otimes_{\mathbb{Q}_{p}}\mathbb{Q}_{p^2})$. 
\end{proof}

\begin{remarks}[About the conditions of Theorem \ref{Theorem_Mumford_curve}]\label{remarks_Thm}
\hfill
\begin{enumerate}[(1)]
\item
Condition ($ii$) of Theorem \ref{Theorem_Mumford_curve} is satisfied by all the definite Eichler orders in Table \ref{Table_xi}.
\item
If one wants to do explicit computations, condition ($iii$) of Theorem \ref{Theorem_Mumford_curve} is easy to check in each case once $p$ is fixed. Actually, once the finite set of generators for the group $\Gamma_{p}(\xi)$ is computed, one only needs to check whether any of them has null trace. 
\item
When condition ($iii$) of Theorem \ref{Theorem_Mumford_curve} is not satisfied, it is still possible to find a Schottky group $\Gamma_{p}(\xi)^{\star}\subseteq\Gamma_{p}(\xi)$, avoiding the $2$-torsion, together with an explicit system of generators. This is shown in \cite[Ch. IX]{Gerritzen_vanderPut1980} when $D=2,N=1$, and can be extended to all cases considered in the present study. Using Magma, we compute generators 
for $\Gamma_{p}(\xi)^{\star}$ for primes $p\leq 2000$ in all cases in Table \ref{Table_xi}, and check that its rank is always $p$. 
Therefore we find a Mumford curve of genus $p$ covering the $p$-adic Shimura curve.
\end{enumerate}
\end{remarks}

\section{Application of the method to bad reduction of Shimura curves}\label{Sec4}

\subsection{Reduction-graphs with lengths}
For the definitions of \emph{admissible curve} and \emph{reduction-graph with lengths} we refer the reader to \cite{JordanLivne1984}. The Drinfel'd model $\mathcal{X}(Dp,N)$ over $\mathbb{Z}_{p}$ of the Shimura curve $X(Dp,N)$ (cf. Definition \ref{Drinfeld_model}), is then an admissible curve, and its reduction-graph $\Gamma_{p,+}\backslash\mathcal{T}_{p}$ is a graph with lengths. 

In \cite{FrancMasdeu2014} the authors develop an algorithm to compute the reduction-graphs with lengths at $p$ of Shimura curves $X(Dp,N)$ associated with an Eichler order of level $N$ inside an indefinite quaternion algebra of discriminant $Dp$. Thus this problem is completely solved for all Shimura curves. Nevertheless, in this section we give general formulas describing the reduction-graphs with lengths for those families of Shimura curves satisfying Corollary \ref{Corollary}, that is, families $X(Dp,N)$ such that $h(D,N)=1$, except for $X(2p,5)$ and $X(7p,1)$. After Theorem \ref{Theorem_Mumford_curve}, we can recover these formulas thanks to the explicit knowledge of the Mumford curves covering the $p$-adic Shimura curves considered. We are led to study the two following coverings:
\begin{enumerate}[(i)]
\item
$\Gamma_{p}(\xi)\backslash\mathcal{T}_{p}\twoheadrightarrow\Gamma_{p}\backslash\mathcal{T}_{p}$ of degree $\#\mathcal{O}^{\times}/\mathbb{Z}^{\times}$.
\item
$\Gamma_{p,+}\backslash\mathcal{T}_{p}\twoheadrightarrow\Gamma_{p}\backslash\mathcal{T}_{p}$ of degree $2$.
\end{enumerate} 
The reduction-graph with lengths $\Gamma_{p}\backslash\mathcal{T}_{p}$ is, by the way, the reduction-graph of an integral model of the Atkin-Lehner quotient of $X(Dp,N)$ associated with the involution of norm $p$. 

\begin{defn}
	Let $\mathcal{G}$ be a graph and $\Gamma$ a group acting on the left on $\mathcal{G}$. The \emph{length} of a vertex $[v]\in\mathrm{Ver}(\Gamma\backslash\mathcal{G})$ (resp. an edge $[e]\in\mathrm{Ed}(\Gamma\backslash\mathcal{G})$) is the cardinality of the stabiliser $\Gamma_v$ of a representative $v\in\mathrm{Ver}(\mathcal{G})$ (resp. the stabiliser $\Gamma_e$ of a representative $e\in\mathrm{Ed}(\mathcal{G})$) inside the group $\Gamma$. We denote it by $\ell([v])$ (resp. $\ell([e])$).
	
	Given a vertex $v\in\mathrm{Ver}(\mathcal{G})$, we denote by $\mathrm{Star}(v)$ the subset of oriented edges $e\in\mathrm{Ed}(\mathcal{G})$ with origin in $v$.
	We define the integer
	$c_{n}:=\#\{[e]\in\mathrm{Ed}(\Gamma\backslash\mathcal{G})\mid\,\ell([e])=n\}$,
	for every $n\geq1$. 
\end{defn}

If $\Gamma$ is a group acting on the left on a graph $\mathcal{G}$ there is a natural surjection $\mathrm{Star}(v)\rightarrow\mathrm{Star}([v])$. Applying the orbit-stabiliser theorem, one can easily prove the following lemma.

\begin{lemma}\label{formula}
	Let $\mathcal{G}$ be a graph and let $\Gamma$ be a group acting on the left on $\mathcal{G}$. Take $v\in\mathrm{Ver}(\mathcal{G})$ and $e\in\mathrm{Star}(v)$. Then $\ell([e])$ divides $\ell([v])$ and the following formula holds:
	$$\#\mathrm{Star}(v)=\sum_{[e]\in\mathrm{Star}([v])}\frac{\ell([v])}{\ell([e])}.$$
\end{lemma}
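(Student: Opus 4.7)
The plan is to prove both assertions directly from the orbit--stabiliser theorem, after checking one key compatibility: the stabiliser of an edge $e\in\mathrm{Star}(v)$ is contained in the stabiliser of its origin $v$.

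First I would verify this inclusion. If $\gamma\in\Gamma_e$ then $\gamma$ fixes $e$, hence fixes the origin of $e$, which is $v$; thus $\gamma\in\Gamma_v$ and $\Gamma_e\subseteq\Gamma_v$. Since both groups are finite (this is the setting in which the lengths are defined), Lagrange's theorem immediately gives $\ell([e])=\#\Gamma_e\mid\#\Gamma_v=\ell([v])$, which is the first assertion.

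For the formula, I would analyse the natural surjection $\Sigma:\mathrm{Star}(v)\twoheadrightarrow\mathrm{Star}([v])$, $e\mapsto[e]$, and compute the cardinality of each fibre. Given $e\in\mathrm{Star}(v)$, the fibre $\Sigma^{-1}([e])$ consists of the edges of the form $\gamma e$ with $\gamma\in\Gamma$ such that $\gamma e$ still has origin $v$. The latter condition forces $\gamma v=v$, i.e.\ $\gamma\in\Gamma_v$. Conversely any $\gamma\in\Gamma_v$ yields an edge $\gamma e\in\mathrm{Star}(v)$ with $[\gamma e]=[e]$. Hence $\Sigma^{-1}([e])=\Gamma_v\cdot e$, and by orbit--stabiliser
\[
\#\Sigma^{-1}([e])=\frac{\#\Gamma_v}{\#(\Gamma_v\cap\Gamma_e)}=\frac{\#\Gamma_v}{\#\Gamma_e}=\frac{\ell([v])}{\ell([e])},
\]
using the inclusion $\Gamma_e\subseteq\Gamma_v$ established above.

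Summing over the fibres of $\Sigma$ then yields
\[
\#\mathrm{Star}(v)=\sum_{[e]\in\mathrm{Star}([v])}\#\Sigma^{-1}([e])=\sum_{[e]\in\mathrm{Star}([v])}\frac{\ell([v])}{\ell([e])},
\]
as claimed. There is no serious obstacle here; the only point that requires a moment's thought is the identification of the fibre of $\Sigma$ with a $\Gamma_v$-orbit (rather than a $\Gamma$-orbit), which relies precisely on the observation that restricting to edges emanating from $v$ forces the acting group element to stabilise $v$.
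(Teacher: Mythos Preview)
Your proof is correct and follows exactly the approach the paper indicates: the paper merely remarks that there is a natural surjection $\mathrm{Star}(v)\twoheadrightarrow\mathrm{Star}([v])$ and that the lemma then follows from the orbit--stabiliser theorem, without spelling out the details. You have filled in precisely those details, and in particular the key identification $\Sigma^{-1}([e])=\Gamma_v\cdot e$ via $\Gamma_e\subseteq\Gamma_v$ is the right observation.
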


For ease exposition, from now on we fix a presentation of $H=\left(\frac{a,b}{\mathbb{Q}}\right)$, and consider in each case the ones in Table \ref{Table_xi}. For every prime $p$ with $\left(\frac{a}{p}\right)=1$, we take the immersion
$$
\Phi_{p}: H  \rightarrow   \mathrm{M}_{2}(\mathbb{Q}_{p}(\sqrt{a}))=\mathrm{M}_{2}(\mathbb{Q}_{p}),\quad
x_{0}+x_{1}i+x_{2}j+x_{3}k \mapsto
\left(\begin{smallmatrix}x_{0}+x_{1}\sqrt{a} & x_{2}+x_{3}\sqrt{a} \\ b(x_{2}-x_{3}\sqrt{a}) & x_{0}-x_{1}\sqrt{a}\end{smallmatrix}\right).
$$

\begin{remark}
In general, once a matrix immersion $\Phi_{p}:H\rightarrow\mathrm{M}_{2}(\mathbb{Q}_{p}(\sqrt{a}))$ is fixed, one can always change the presentation of $H$ in order to have $\Phi_{p}(H)\subseteq\mathrm{M}_{2}(\mathbb{Q}_{p})$ (cf. \cite{SerreCdA1970}).
\end{remark}

\begin{prop}
Let $B$ be an indefinite quaternion algebra of discriminant $Dp$ and $\mathcal{O}_{B}(N)\subseteq B$ an Eichler order of level $N$.
Let $X(Dp,N)$ be the Shimura curve associated with $B$ and $\mathcal{O}_{B}(N)$. 
Let $H$ be the definite quaternion algebra of discriminant $D$ and $\mathcal{O}=\mathcal{O}_{H}(N)\subseteq H$ an Eichler order of level $N$.
Assume that $\mathcal{O}$ satisfies the hypothesis of Theorem $\ref{Theorem_Mumford_curve}$. Then  $\Gamma_{p}\backslash\mathcal{T}_{p}$ has one vertex of length $\#\mathcal{O}^\times/\mathbb{Z}^\times$, and the number of edges is described in Table $\ref{Table_edges_graph}$.
\end{prop}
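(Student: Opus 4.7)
The plan is to exploit the explicit description of the Mumford covering from Theorem~\ref{Theorem_Mumford_curve} together with the split short exact sequence of Lemma~\ref{exact_seq}, which yields a finite Galois cover
\[
\Gamma_p(\xi)\backslash\mathcal{T}_p \twoheadrightarrow \Gamma_p\backslash\mathcal{T}_p
\]
of degree $\#\mathcal{O}^\times/\mathbb{Z}^\times$. By Theorem~\ref{Theorem_Mumford_curve}(d) the source is the bouquet consisting of the single vertex $[v^0]$ with $(p+1)/2$ loops. Since a further quotient by a finite group can only collapse vertices and identify edges, $\Gamma_p\backslash\mathcal{T}_p$ also has a single vertex, namely the image of $v^0$, and its edges are exactly the orbits of the $\mathcal{O}^\times/\mathbb{Z}^\times$-action on the $p+1$ half-edges in $\mathrm{Star}(v^0) \subset \mathrm{Ed}(\mathcal{T}_p)$.

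For the length of the vertex I would use the identification $\mathrm{Ver}(\mathcal{T}_p) \simeq \mathrm{PGL}_2(\mathbb{Q}_p)/\mathrm{PGL}_2(\mathbb{Z}_p)$: a class $[\alpha] \in \Gamma_p$ fixes $v^0$ iff $\Phi_p(\alpha) \in \mathbb{Q}_p^\times \cdot \mathrm{GL}_2(\mathbb{Z}_p)$. Taking the $p$-adic valuation of the reduced norm, this forces $v_p(\mathrm{Nm}(\alpha))$ to be even, and after quotienting by $\mathbb{Z}[1/p]^\times$ one may assume $\mathrm{Nm}(\alpha)=1$, so the stabilizer is naturally $\mathcal{O}^\times/\mathbb{Z}^\times$ and $\ell([v^0]) = \#\mathcal{O}^\times/\mathbb{Z}^\times$ as listed in Table~\ref{Table_xi}. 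To count the edges, the reduction map of Theorem~\ref{red_map} identifies $\mathrm{Star}(v^0)$ with $\mathbb{P}^1(\mathbb{F}_p)$, and the induced action of the stabilizer $\mathcal{O}^\times/\mathbb{Z}^\times$ on this set factors through the reduction of $\Phi_p$ modulo $p$, landing in $\mathrm{PGL}_2(\mathbb{F}_p)$. Lemma~\ref{formula} then yields the consistency identity
\[
p+1 \;=\; \sum_{[e]\in\mathrm{Star}([v^0])} \frac{\#\mathcal{O}^\times/\mathbb{Z}^\times}{\ell([e])},
\]
so that in each case only finitely many edge-length profiles are a priori possible.

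What remains is the case-by-case enumeration performed order by order on Table~\ref{Table_xi}: one writes down explicit representatives of $\mathcal{O}^\times/\mathbb{Z}^\times$, reduces them via $\Phi_p$ to $\mathrm{PGL}_2(\mathbb{F}_p)$, and reads off the cycle structure of the resulting permutation on $\mathbb{P}^1(\mathbb{F}_p)$; the cycle lengths are the edge lengths and their multiplicities give the counts $c_n$ displayed in Table~\ref{Table_edges_graph}. The main obstacle is precisely this bookkeeping: since the finite groups $\mathcal{O}^\times/\mathbb{Z}^\times$ contain elements whose minimal polynomials define imaginary quadratic subfields of $H$, the cycle type will split into subcases according to the splitting behaviour of $p$ in those subfields (diagonalisable reductions contribute fixed points of shorter edge-length, while non-diagonalisable ones act freely and contribute longer cycles). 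Once the reductions mod $p$ of the unit generators are tabulated, the verification of the claimed counts reduces to a routine application of the orbit-counting formula above.
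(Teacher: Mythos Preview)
Your approach is essentially identical to the paper's: both establish the single vertex via the finite quotient $\Gamma_p(\xi)\backslash\mathcal{T}_p \twoheadrightarrow \Gamma_p\backslash\mathcal{T}_p$, identify $\mathrm{Star}(v^0)$ with $\mathbb{P}^1(\mathbb{F}_p)$, reduce $\mathcal{O}^\times/\mathbb{Z}^\times$ into $\mathrm{PGL}_2(\mathbb{F}_p)$ via $\Phi_p$, and then carry out the case-by-case orbit analysis constrained by Lemma~\ref{formula}. One terminological slip to correct in your last paragraph: it is the \emph{stabiliser sizes}, not the cycle (orbit) lengths, that equal the edge lengths $\ell([e])$; the orbit of an edge with stabiliser of order $n$ has size $\#(\mathcal{O}^\times/\mathbb{Z}^\times)/n$, and $c_n$ counts the number of such orbits (i.e.\ oriented edges in the quotient), not their sizes.
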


\begin{table}[h]
\captionsetup{font=footnotesize}
\centering
\scalebox{0.65}{
\begin{tabular}{|c||c|c|c|c||c|c|c||c|c||c|} \hline
	$D$ & 2 & 2 & 2 & 2 & 3 & 3 & 3 & 5 & 5 & 13 \\ \hline
	$N$ & 1 & 3 & 9 & 11 & 1 & 2 & 4 & 1 & 2 & 1 \\ \hline
	$c_1$ & $\frac{1}{12}\left(p-9-4\left(\frac{3}{p}\right)\right)$ & $\frac{1}{3}\left(p-\left(\frac{3}{p}\right)\right)$ & $p+1$ & $p+1$ & $\frac{1}{6}\left(p-6-\left(\frac{3}{p}\right)\right)$ & $\frac{1}{2}(p-1)$ & $p+1$ & $\frac{1}{3}\left(p-\left(\frac{-3}{p}\right)\right)$ & $p+1$ & $p+1$ \\ \hline
	$c_2$ & 1 & 0 & 0 & 0 & 2 & 2 & 0 & 0 & 0 & 0 \\ \hline
	$c_3$ & $1+\left(\frac{3}{p}\right)$ & $1+\left(\frac{3}{p}\right)$ & 0 & 0 & $\frac{1}{2}\left(1+\left(\frac{3}{p}\right)\right)$ & 0 & 0 & $1+\left(\frac{-3}{p}\right)$ & 0 & 0 \\ \hline
\end{tabular}
}
\caption{Number of edges with lengths of the graph $\Gamma_{p}\backslash\mathcal{T}_{p}$}\label{Table_edges_graph}
\vspace{-1.0em}
\end{table}

\begin{proof}
	Since $\Gamma_{p}(\xi)\backslash\mathcal{T}_{p}$ has one vertex of length $1$ (cf. Theorem \ref{Theorem_Mumford_curve}), we have that 
	$\Gamma_p\backslash\mathcal{T}_p\simeq(\Gamma_p(\xi)\backslash\mathcal{T}_p)/(\Gamma_p/\Gamma_p(\xi))$
	has only one vertex $[v^{0}]$ of length $\ell([v^{0}])=\#\Gamma_{p}/\Gamma_{p}(\xi)=\#\mathcal{O}^\times/\mathbb{Z}^\times$.
	After Theorem \ref{Theorem_Mumford_curve} (d), we know that $\Gamma_p(\xi)\backslash\mathcal{T}_p$ consists of $p+1$ oriented edges
	pair-wise identified.
		
	Now we only need to describe the action of $\mathcal{O}^\times/\mathbb{Z}^\times\simeq \Gamma_p/\Gamma_p(\xi)$ on this graph.
	Note that the fundamental domain for the action of $\Gamma_{p}(\xi)$ on $\mathcal{T}_{p}$ 
	is the open tree 
	$
	\mathcal{T}_{p}^{(1)}\smallsetminus\{v_{a}^{(1)}\mid\, a\in\mathbb{P}^{1}(\mathbb{F}_{p})\}\simeq\mathrm{Star}(v^{0}),
	$
	so we can identify each of its open edges with the $\mathbb{F}_{p}$-rational points of $\mathbb{P}^{1}_{\mathbb{F}_{p}}$ by fixing a bijection $\delta:\mathrm{Star}(v^{0})\simeq \mathbb{P}^{1}(\mathbb{F}_{p})$. This gives rise to the following action on the edges:
	$$\mathrm{PGL}_{2}(\mathbb{Z}_{p})\times \mathrm{Star}(v^{0}) \rightarrow\mathrm{Star}(v^{0}),\quad (\gamma,e) \mapsto \mathrm{red}(\gamma)\cdot\delta(e)$$
	where $\mathrm{red}: \mathrm{PGL}_{2}(\mathbb{Z}_{p})\twoheadrightarrow\mathrm{PGL}_{2}(\mathbb{F}_{p})=\mathrm{Aut}(\mathbb{P}^{1}(\mathbb{F}_{p}))$ is the natural projection. Since $\Phi_{p}(\mathcal{O}^{\times})/\mathbb{Z}^{\times}\subseteq \mathrm{PGL}_{2}(\mathbb{Z}_{p})$, in order to study the action of $\Phi_{p}(\mathcal{O}^{\times})/\mathbb{Z}^{\times}$ on $\Gamma_{p}(\xi)\backslash\mathcal{T}_{p}$ we need to study the fixed points in $\mathbb{P}^{1}(\mathbb{F}_{p})$ of these finite order transformations.
		
	Since, by Lemma \ref{formula} the length $\ell([e])$ of an edge $[e]\in\mathrm{Ed}(\Gamma_p\backslash\mathcal{T}_p)$ has to divide $\ell([v^{0}])=\#\mathcal{O}^\times/\mathbb{Z}^\times$, by Table \ref{Table_xi} we know that $\ell([e])\in\{1,2,3,4,6,12\}$. We need to separate the cases:
	\begin{enumerate}[(i)]
		\item $(D,N)=(2,9),(2,11),(3,4),(5,2)$ and $(13,1)$. In these cases we have that $\#\mathcal{O}^\times/\mathbb{Z}^\times=1$, so $\Gamma_{p}\backslash\mathcal{T}_{p}$ is the stable reduction-graph $\Gamma_{p}(\xi)\backslash\mathcal{T}_{p}$ with $(p+1)/2$ edges of length $1$.
		\item $D=2$, $N=1$. In this case we have
		$\mathcal{O}^\times/\mathbb{Z}^\times=\left\{1,i,j,k,\frac{1\pm i\pm j\pm k}{2}\right\}$
		and there are transformations of order 2 and of order 3. The order 2 transformations are
		$\Phi_p(i)=\left(\begin{smallmatrix} \sqrt{-1} & 0 \\ 0 & \sqrt{-1}\end{smallmatrix}\right),  \Phi_p(j)=\left(\begin{smallmatrix} 0 & 1 \\ -1 & 0 \end{smallmatrix}\right)$ and $\Phi_p(k)=\left(\begin{smallmatrix} 0 & \sqrt{-1} \\ \sqrt{-1} & 0 \end{smallmatrix}\right)$,
		and the order 3 transformations are
		$\Phi_p\left(\frac{1\pm i+j+k}{2}\right)=\frac{1}{2}\left(\begin{smallmatrix}1\pm \sqrt{-1} & 1+\sqrt{-1} \\ -1+\sqrt{-1} & 1\mp \sqrt{-1} \end{smallmatrix}\right),$
		$\Phi_p\left(\frac{1+ i\pm j+k}{2}\right)=\frac{1}{2}\left(\begin{smallmatrix}1+ \sqrt{-1} & \pm 1+\sqrt{-1} \\ \mp1+\sqrt{-1} & 1- \sqrt{-1} \end{smallmatrix}\right),$
		$\Phi_p\left(\frac{1+ i+ j\pm k}{2}\right)=\frac{1}{2}\left(\begin{smallmatrix}1+ \sqrt{-1} & 1\pm \sqrt{-1} \\ -1\mp\sqrt{-1} & 1- \sqrt{-1} \end{smallmatrix}\right),$	
		$\Phi_p\left(\frac{1\pm i- j\mp k}{2}\right)= \frac{1}{2}\left(\begin{smallmatrix}1\pm \sqrt{-1} & -1 \mp\sqrt{-1} \\ 1\mp\sqrt{-1} & 1\mp \sqrt{-1} \end{smallmatrix}\right).$
		
		Studying the fixed points of each transformation in $\mathbb{P}^1(\mathbb{F}_p)$ one can see that, under the restriction that $\left(\frac{-1}{p}\right)=1$ (which is given by the chosen presentation for the algebra), each order 2 transformation always has two fixed points in $\mathbb{P}^1(\mathbb{F}_p)$, and that an order 3 transformation has two fixed points in $\mathbb{P}^1(\mathbb{F}_p)$ if $p\equiv 1$ mod 3 and zero fixed points if $p\equiv 2$ mod 3. One also checks that there is no point fixed at the same time by an order 2 transformation and an order 3 transformation. From this we conclude that $c_3=2$ if $p\equiv1$ mod $3$ and $c_3=0$ if $p\equiv2$ mod 3. Next, one can see that the three unoriented edges of $\Gamma_p(2)\backslash\mathcal{T}_p$ corresponding to the six fixed points of order 2 transformations are all identified by the transformations of order 3. Thus we are left with only one edge $[y]$ of length 2. Since $c_2$ counts the number of oriented edges of order 2, now we could have either $c_2=2$ (if $[e]\neq[\overline{e}]$) or $c_2=1$ (if $[e]=[\overline{e}]$). An easy computation shows that we are in the letter case. Thus $c_2=1$. Finally, by Lemma \ref{formula}, we obtain $p+1=12c_1+6c_2+4c_3$. So we have that if $c_3=2$ then $c_1=(p-13)/12$ and if $c_3=0$ then $c_1=(p-5)/12$.
		
		\item $D=2$, $N=3$. Here, since $\#\mathcal{O}^\times/\mathbb{Z}^\times=3$, we have $c_2=0$. Lemma \ref{formula} gives $p+1=3c_1+c_3$. After studying the order $3$ transformations we see that $c_3=2$ if $p\equiv1$ mod $3$ and $c_3=0$ if $p\equiv2$ mod 3. Thus $c_1=(p-1)/3$ if $p\equiv1$ mod 3 and $c_3=(p+1)/3$ if $p\equiv2$ mod 3.
		
		\item $D=3$, $N=1$. Here $\#\mathcal{O}^\times/\mathbb{Z}^\times=6$. One sees that the points fixed by order 2 transformations are never fixed by order 3 transformations. Lemma \ref{formula} gives $p+1=6c_1+3c_2+2c_1$. The order 3 transformations give $c_3=1$ when $p\equiv 1$ mod 3 and $c_3=0$ when $p\equiv2$ mod 3, and the order 2 transformations give some unoriented edges that the order 3 transformations take to the same class $[e]\in\mathrm{Ed}(\Gamma_p(2)\backslash\mathcal{T}_p)$. Since $[e]\neq[\overline{e}]$, we have $c_2=2$.

		\item $D=3$, $N=2$. Here $\#\mathcal{O}^\times/\mathbb{Z}^\times=2$, so $c_3=0$. The order 2 transformation gives two oriented edges in different classes, so $c_2=2$. Lemma \ref{formula} gives $p+1=2c_1+c_2$, so $c_1=\frac{p-1}{2}$.
		
		\item $D=5$, $N=1$. In this case $\#\mathcal{O}^\times/\mathbb{Z}^\times=3$, so $c_2=0$. Similarly we have $c_3=2$ and $c_1=(p-1)/3$ when $p\equiv 1$ mod 3 and $c_3=0$ and $c_1=(p+1)/3$ when $p\equiv 2$ mod 3.
	\end{enumerate}
\end{proof}

\begin{remark}
	We already explained how that are restricting ourselves to some infinite set of primes $p$ such that $\left(\frac{a}{p}\right)=1$, with $a=-1$ when $D=2,3$, or $a=-2$ when $D=5, 13$. 
	This means that $p\equiv 1\,\mathrm{mod}\,4$ when $D=2,3$ and $p\equiv 1,3\,\mathrm{mod}\,8$ when $D=5,13$.
	Thus, after section \ref{null-trace_condition} we find out that, at least in the cases $(D,N)=(2,1),(3,1),$ the condition $p\equiv 1\,\mathrm{mod}\,4$ is not restrictive, since the primes satisfying it also satisfy null-trace condition of Theorem \ref{Theorem_Mumford_curve}.
\end{remark}

\subsection{Genus formulas}
	After Corollary \ref{Corollary} we have 
a covering $\Gamma_{p}(\xi)\backslash\mathcal{H}_{p}\twoheadrightarrow\Gamma_{p}\backslash\mathcal{H}_{p}$ of degree $\mathcal{O}^{\times}/\mathbb{Z}^{\times}$, where the quotient $\Gamma_{p}(\xi)\backslash\mathcal{H}_{p}$ is algebraisable and has genus $(p+1)/2$. Therefore, following \cite{Gerritzen_vanderPut1980} we can apply the Riemann-Hurwitz formula to compute the genus of the algebraic curve $\Gamma_{p}\backslash\mathcal{H}_{p}$:
\[
p-1=\#(\mathcal{O}^{\times}/\mathbb{Z}^{\times})(2g-2)+\sum_{d\mid \#(\mathcal{O}^{\times}/\mathbb{Z}^{\times}) } w_{d},
\]
where $w_{d}$ is the number of points on $\Gamma_{p}\backslash\mathcal{H}_{p}$ fixed by some transformation in $\Gamma_{p}$ of order $d > 1$.

\begin{prop}\label{genus_formulas}
Under the assumptions of Theorem $\ref{Theorem_Mumford_curve}$, the genus of $\Gamma_{p}\backslash\mathcal{T}_{p}$ is given in Table $\ref{Table_genus}$,
where
\begin{enumerate}[$(i)$]
\item
$\delta_{p}(2,1):=w_{2,1}+w_{2,2}+w_{2,3}$, with \\
$
w_{2,i}:=\frac{1}{2}\#\{\alpha=(a_{0},a_{1},a_{2},a_{3})_{H}\in\mathcal{O}\mid\, \mathrm{Nm}(\alpha)=p,\,\alpha\equiv 1\,\mathrm{mod}\ 2\mathcal{O},\;a_{i}=0\},\;i=1,2,3.
$
\item
$\delta_p(3,1):=w_{2,1}+w_{2,2}+w_{2,3}$, with \\
$
w_{2,1}:=\frac{1}{2}\#\{\alpha=(a_{0},a_{1},a_{2},a_{3})_{H}\in\mathcal{O}\mid\, \mathrm{Nm}(\alpha)=p,\alpha\equiv 1\,\mathrm{mod}\ 2\mathcal{O},\;a_{1}=0\},
$ \\
$w_{2,2}:=\frac{1}{2}\#\{\alpha=(a_{0},a_{1},a_{2},a_{3})_{H}\in\mathcal{O}\mid\, \mathrm{Nm}(\alpha)=p,\alpha\equiv 1\,\mathrm{mod}\ 2\mathcal{O},\;a_{1}+3a_{2}=0\},
$ \\
$
w_{2,3}:=\frac{1}{2}\#\{\alpha=(a_{0},a_{1},a_{2},a_{3})_{H}\in\mathcal{O}\mid\, \mathrm{Nm}(\alpha)=p,\alpha\equiv 1\,\mathrm{mod}\ 2\mathcal{O},\;a_{1}-3a_{2}=0\}.
$
\item
$\delta_p(3,2):=\dfrac{1}{2}\#\{\alpha=(a_{0},a_{1},a_{2},a_{3})_{H}\in\mathcal{O}\mid \mathrm{Nm}(\alpha)=p,\,\alpha\equiv 1\,\mathrm{mod}\ \xi\mathcal{O},\;a_{1}-3a_{2}=0\}.$
\end{enumerate}
\end{prop}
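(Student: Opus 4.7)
I would derive the formulas in Table~\ref{Table_genus} from the Riemann–Hurwitz relation
\[
p-1=\#(\mathcal{O}^{\times}/\mathbb{Z}^{\times})(2g-2)+\sum_{d\mid\#(\mathcal{O}^{\times}/\mathbb{Z}^{\times})}w_{d}
\]
already noted in the excerpt, applied to the degree-$\#(\mathcal{O}^{\times}/\mathbb{Z}^{\times})$ cover $\Gamma_{p}(\xi)\backslash\mathcal{H}_{p}\twoheadrightarrow\Gamma_{p}\backslash\mathcal{H}_{p}$, whose source is a Mumford curve of genus $(p+1)/2$ by Theorem~\ref{Theorem_Mumford_curve}. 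All the substance of the proposition lies in evaluating the ramification count $w_{2}$ as the quaternion count $\delta_{p}(D,N)$.

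First I would reduce the count of ramified points to trace-zero arithmetic: a point of $\Gamma_{p}\backslash\mathcal{H}_{p}$ ramified in the cover lifts to $z\in\mathcal{H}_{p}$ stabilised by a finite-order $\beta\in\Gamma_{p}$, which modulo $\mathbb{Z}[1/p]^{\times}$ is represented by a trace-zero quaternion $\beta\in\mathcal{O}$ with $\mathrm{Nm}(\beta)\in\{1,p\}$. The two eigenvalues of $\Phi_{p}(\beta)$ are $\pm\sqrt{-\mathrm{Nm}(\beta)}$, and they avoid $\mathbb{P}^{1}(\mathbb{Q}_{p})$ exactly when $-\mathrm{Nm}(\beta)\notin(\mathbb{Q}_{p}^{\times})^{2}$; since the primes considered satisfy $p\equiv 1\pmod 4$, only the case $\mathrm{Nm}(\beta)=p$ contributes. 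Next, invoking the split short exact sequence of Lemma~\ref{exact_seq} together with the isomorphism $\Gamma_{p}/\Gamma_{p}(\xi)\simeq\mathcal{O}^{\times}/\mathbb{Z}^{\times}$, an orbit fixed by $[u]\in\mathcal{O}^{\times}/\mathbb{Z}^{\times}$ has a lift $z$ with $uz=\gamma z$ for some $\gamma\in\Gamma_{p}(\xi)$, so $\beta:=\gamma^{-1}u$ fixes $z$ and belongs to $u+\xi\mathcal{O}$. Combining the two observations yields
\[
\#\mathrm{Fix}_{\Gamma_{p}(\xi)\backslash\mathcal{H}_{p}}([u])=\tfrac{1}{2}\#\bigl\{\beta\in\mathcal{O}:\mathrm{Nm}(\beta)=p,\ \mathrm{Tr}(\beta)=0,\ \beta\equiv u\!\!\pmod{\xi\mathcal{O}}\bigr\},
\]
the $\tfrac12$ identifying $\pm\beta$ in $\mathrm{PGL}_{2}$.

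The crux is the ``rotation'' bijection $\beta\mapsto\bar u\beta=:\alpha$; since $u\bar u=1$, it carries the set above onto
\[
\bigl\{\alpha\in\mathcal{O}:\mathrm{Nm}(\alpha)=p,\ \alpha\equiv 1\!\!\pmod{\xi\mathcal{O}},\ \mathrm{Tr}(\bar u\alpha)=0\bigr\}.
\]
The map $(u,\alpha)\mapsto\mathrm{Tr}(\bar u\alpha)$ is the polarisation of the reduced norm form, so the condition $\mathrm{Tr}(\bar u\alpha)=0$ is one explicit linear relation on the coordinates $a_{0},a_{1},a_{2},a_{3}$ of $\alpha$ in the integral basis of Table~\ref{Table_xi}. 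A direct computation shows this relation is exactly $a_{1}=0,\ a_{2}=0,\ a_{3}=0$ for $u=i,j,k$ in the case $D=2$, and $a_{1}=0,\ a_{1}+3a_{2}=0,\ a_{1}-3a_{2}=0$ for $u=i,\tfrac12(i+j),\tfrac12(i-j)$ in the case $D=3$, which are precisely the three conditions entering $w_{2,1},w_{2,2},w_{2,3}$. Summing over the order-two conjugacy classes recovers $w_{2}=\delta_{p}(D,N)$. For the cases $(D,N)\in\{(2,1),(2,3),(3,1),(5,1)\}$ in which $\mathcal{O}^{\times}/\mathbb{Z}^{\times}$ contains order-three units, an entirely analogous argument expresses $w_{3}$ as a count of optimal embeddings $\mathbb{Z}[\zeta_{3}]\hookrightarrow\mathcal{O}$, which is a multiple of $1+\bigl(\tfrac{-3}{p}\bigr)$ and produces the quadratic-residue terms in the final answer. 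Substituting every $w_{d}$ into Riemann–Hurwitz and solving for $g$ then yields the ten rows of Table~\ref{Table_genus}.

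The technically most delicate step is the identification in the previous paragraph: verifying that $\mathrm{Tr}(\bar u\alpha)=0$ really does become the stated coordinate equation requires explicit change-of-basis bookkeeping, since the integral bases of Table~\ref{Table_xi} are not $\{1,i,j,k\}$ (the Hurwitz basis contains $(1+i+j+k)/2$; the $D=3$ basis contains $(i+j)/2$ and $(1+k)/2$). A secondary subtlety is that in the case $D=3,N=2$ the ideal $\xi\mathcal{O}$ is not two-sided, so the bijection $\beta\mapsto\bar u\beta$ respects the right-sided congruence only via the case-specific identity $\bar u\xi\mathcal{O}=\xi\mathcal{O}$, which has to be checked by hand using the explicit $\xi$ from Table~\ref{Table_xi}.
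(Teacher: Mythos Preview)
Your overall strategy coincides with the paper's: both invoke the Riemann--Hurwitz relation for the cover $\Gamma_{p}(\xi)\backslash\mathcal{H}_{p}\twoheadrightarrow\Gamma_{p}\backslash\mathcal{H}_{p}$ and reduce everything to computing the ramification numbers $w_{d}$. The genuine difference lies in how the order-two contribution is evaluated. The paper fixes each order-two unit $u$, writes the equation $uz=\gamma z$ with $\gamma=\Phi_{p}(\alpha)\in\Gamma_{p}(\xi)$ as a quadratic in $z$, and checks directly that the discriminant equals $p-a_{i}^{2}$ (or the appropriate linear combination), which is a non-square in $\mathbb{Q}_{p}$ precisely when the relevant coordinate vanishes. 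Your rotation $\alpha=\bar u\beta$ reaches the same endpoint more conceptually: $\mathrm{Tr}(\bar u\alpha)=0$ is exactly the bilinear-form condition that encodes the paper's discriminant criterion, and your check that it becomes $a_{1}=0$, $a_{1}\pm 3a_{2}=0$ for the three involutions of the $D=3$ case is correct. Both methods are sound; yours avoids solving quadratics case by case, while the paper's makes the dependence on $p\equiv 1\pmod 4$ (via $-a_{i}^{2}\in\mathbb{Q}_{p}^{\times 2}$) fully transparent.

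There is, however, a genuine slip in your treatment of $w_{3}$. The order-three contribution is \emph{not} a count of optimal embeddings $\mathbb{Z}[\zeta_{3}]\hookrightarrow\mathcal{O}$---those embeddings already exist in $\mathcal{O}^{\times}$ regardless of $p$ and are what produce the order-three units in the first place. What matters for ramification is whether the two fixed points of each order-three unit $u$ lie in $\mathcal{H}_{p}$ or in $\mathbb{P}^{1}(\mathbb{Q}_{p})$; since the eigenvalues of $\Phi_{p}(u)$ are primitive sixth roots of unity, they lie in $\mathbb{Q}_{p}$ exactly when $p\equiv 1\pmod 3$, so each such unit contributes $1-\bigl(\tfrac{-3}{p}\bigr)$, not a multiple of $1+\bigl(\tfrac{-3}{p}\bigr)$ as you wrote. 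The paper handles this simply by observing that for the order-three case one may take $\gamma=1$ and read off the answer directly. With this correction your argument goes through and matches Table~\ref{Table_genus}.
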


\begin{table}[h]
\vspace{-1em}
\captionsetup{font=footnotesize}
\centering
\scalebox{0.75}{
\begin{tabular}{|c|c|c||c|c|c|} \hline
	$D$ & $N$ & genus & $D$ & $N$ & genus\\ 
	\hline \hline
	2 & 1 & $\vphantom{\int\bigcup^n\frac{\frac{i}{2}}{6}}$ $\dfrac{1}{24}\left(p+23-\delta_{p}(2,1)-8\left(1-\left(\frac{3}{p}\right)\right)\right)$ & 3 & 2 & $\dfrac{1}{4}\left(p+3-\delta_{p}(3,2)\right)$ \\
	2 & 3 & $\dfrac{1}{6}\left(p+5-2\left(1-\left(\frac{-3}{p}\right)\right)\right)$ & 3 & 4 & $(p+1)/2$ \\
	2 & 9 & $(p+1)/2$ & 5 & 1 & $\dfrac{1}{6}\left(p+5-2\left(1-\left(\frac{-3}{p}\right)\right)\right)$ \\
	2 & 11 & $(p+1)/2$ & 5 & 2 & $(p+1)/2$ \\
	3 & 1 & $\vphantom{\int\bigcup^n\frac{\frac{i}{2}}{6}}$ $\dfrac{1}{12}\left(p+11-\delta_{p}(3,1)-2\left(1-\left(\frac{3}{p}\right)\right)\right)$  & 13 & 1 & $(p+1)/2$ \\ \hline
\end{tabular}
}
\caption{Genus of $\Gamma_{p}\backslash\mathcal{T}_{p}$}\label{Table_genus}
\vspace{-1.5em}
\end{table}

\begin{proof}
For the case $D=2$, $N=1$ we refer the reader to \cite[Ch. IX]{Gerritzen_vanderPut1980}. Since all the cases are similarl, we will do just the case $D=3$, $N=1$, where the group $\Phi_{p}(\mathcal{O}^{\times})/\mathbb{Z}^{\times}$ is the biggest among the ones we are considering.
For each order $2$ element $u_{2,i}\in\Phi_{p}(\mathcal{O}^{\times})/\mathbb{Z}^{\times}$, we compute
$
w_{2,i}:=\#\{[z]\in\Gamma_{p}\backslash\mathcal{T}_{p}\mid\, u_{2,i}z=\gamma z,\; \gamma\in\Gamma_{p}(2)\}.
$
Let us study each unit of order 2.
\begin{itemize}
\item
$u_{2,1}=\left(\begin{smallmatrix}\sqrt{-1} & 0 \\ 0 & \sqrt{-1} \end{smallmatrix}\right)$. The equation $uz=\gamma z$, with $\gamma=\left(\begin{smallmatrix}A & B \\ C & D\end{smallmatrix}\right)\in\Gamma_{p}(2)$ gives 
$$
z=-\frac{A+D}{2C}\pm\frac{\sqrt{(\frac{A+D}{2})^{2}-BC}}{C},
$$ 
so $(\frac{A+D}{2})^{2}-BC=a_{0}+3(a_{2}^{2}+a_{3}^{2})=p-a_{1}^{2}$, which does not belong to $\mathbb{Q}_{p}^{\times 2}$ if and only if $a_{1}=0$. Therefore we obtain that $w_{2,1}$ is equal to $2$ times the number of generators $\alpha$ of $\Gamma_{p}(\xi)$ such that $(a_{0},a_{1},a_{2},a_{3})_{H}$ in the basis of $H=\left(\frac{-1,-3}{\mathbb{Q}}\right)$ has $a_{1}=0$, and this is equal to
$
1/4\cdot 2\cdot \#\{\alpha=(a_{0},a_{1},a_{2},a_{3})_{H}\in\mathcal{O}\mid\, \mathrm{Nm}(\alpha)=p,\alpha\equiv 1\,\mathrm{mod}\,2\mathcal{O},\;a_{1}=0\},
$
since we have to exclude, for each $\alpha$ in this set, the elements $\overline{\alpha}$, $-\alpha$ and $-\overline{\alpha}$.
\item
$u_{2,2}=\left(\begin{smallmatrix}1/2\sqrt{-1} &  1/2\\ -3/2 & -1/2\sqrt{-1} \end{smallmatrix}\right)$. Analogous arguments show that the number $w_{2,2}$ is equal to $2$ times the number of generators $\alpha=(a_{0},a_{1},a_{2},a_{3})_{H}$ of $\Gamma_{p}(\xi)$ with $a_{1}+3a_{2}=0$.
\item
$u_{2,3}=\left(\begin{smallmatrix}1/2\sqrt{-1} & -1/2\\ 3/2& -1/2\sqrt{-1} \end{smallmatrix}\right)$. Similar computations in this case give that $w_{2,3}$ is equal to $2$ times the number of generators $\alpha=(a_{0},a_{1},a_{2},a_{3})_{H}$ of $\Gamma_{p}(\xi)$ with $a_{1}-3a_{2}=0$. 
\end{itemize}
For the order $3$ transformations $u_{3,i}\in\Phi_{p}(\mathcal{O}^{\times})/\mathbb{Z}^{\times}$, $i=1,2$, we have to compute 
$w_{3,i}:=\{[z]\in\Gamma_{p}\backslash\mathcal{T}_{p}\mid\, uz= z\}$. 
This is $1-\left(\frac{3}{p}\right)$ in each case, and the formula is then completed.
\end{proof}

\begin{cor}
Under the assumptions of Theorem $\ref{Theorem_Mumford_curve}$, $\Gamma_{p,+}\backslash\mathcal{T}_{p}$ is described as follows:
\begin{enumerate}[$(a)$]
\item 
It has $c_{1}+c_{2}+c_{3}$ unoriented edges joining two vertices, with $c_{1},c_{2},c_{3}$ as in Table $\ref{Table_edges_graph}$.
\item
Its genus is $g_{+}=c_{1}+c_{2}+c_{3}-1.$
\item
The number of edges with lengths is obtained by multiplying by $2$ the formulas of Table $\ref{Table_edges_graph}$.
\end{enumerate}
\end{cor}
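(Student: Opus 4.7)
The plan is to exploit the natural $\mathrm{PGL}_2(\mathbb{Q}_p)$-equivariant bipartition of the Bruhat-Tits tree $\mathcal{T}_p$ together with the degree-two cover $\Gamma_{p,+}\backslash\mathcal{T}_p\twoheadrightarrow\Gamma_p\backslash\mathcal{T}_p$. For any representative $v=\{\alpha\}\in\mathrm{Ver}(\mathcal{T}_p)$, the parity of $v_p(\det\alpha)$ depends only on the homothety class of $\alpha$, and an element $\gamma\in\mathrm{PGL}_2(\mathbb{Q}_p)$ preserves this parity if and only if $v_p(\det\gamma)$ is even. By construction $\Gamma_{p,+}$ is precisely the parity-preserving subgroup of $\Gamma_p$, while the nontrivial coset exchanges the two colour classes of $\mathcal{T}_p$. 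All three statements will then follow by applying the orbit-stabiliser theorem to vertices and to oriented edges through this cover.

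First I would treat the vertices. By the preceding Proposition, $\Gamma_p\backslash\mathcal{T}_p$ consists of a single vertex class $[v^0]$ of length $\#\mathcal{O}^\times/\mathbb{Z}^\times$, and $\mathrm{Stab}_{\Gamma_p}(v^0)$ is represented by matrices in $\mathrm{GL}_2(\mathbb{Z}_p)$, whose determinants have $p$-adic valuation $0$. Hence $\mathrm{Stab}_{\Gamma_p}(v^0)\subseteq\Gamma_{p,+}$, and the $\Gamma_p$-orbit of $v^0$ splits into two $\Gamma_{p,+}$-orbits, one per colour class, both still of length $\#\mathcal{O}^\times/\mathbb{Z}^\times$. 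Since every edge of $\mathcal{T}_p$ joins vertices of opposite parity, every edge of $\Gamma_{p,+}\backslash\mathcal{T}_p$ joins the two distinct vertex classes, which already supplies the combinatorial shape asserted in $(a)$.

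Next I would analyse the edges. The stabiliser of any oriented edge in $\mathrm{PGL}_2(\mathbb{Q}_p)$ is an Iwahori subgroup, which is contained in the parity-preserving part of $\mathrm{PGL}_2(\mathbb{Q}_p)$; hence $\mathrm{Stab}_{\Gamma_p}(e)\subseteq\Gamma_{p,+}$ for every oriented edge $e$. By orbit-stabiliser, each $\Gamma_p$-orbit of oriented edges therefore splits into exactly two $\Gamma_{p,+}$-orbits of the same length, which proves $(c)$: the counts $c_1,c_2,c_3$ of Table \ref{Table_edges_graph} all double. Moreover, an element of $\Gamma_{p,+}$ cannot swap the endpoints of an edge (they carry different parities), so one always has $[e]\neq[\overline{e}]$ in $\Gamma_{p,+}\backslash\mathcal{T}_p$, and the $2(c_1+c_2+c_3)$ oriented edges pair up into precisely $c_1+c_2+c_3$ unoriented ones, finishing $(a)$.

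Finally, the quotient $\Gamma_{p,+}\backslash\mathcal{T}_p$ is connected, being the image of the connected tree $\mathcal{T}_p$, so its first Betti number --- which is by definition the genus of the reduction-graph --- equals $E-V+1=(c_1+c_2+c_3)-2+1$, giving $(b)$. The main (and really only) step requiring care is the verification that vertex and oriented-edge stabilisers in $\Gamma_p$ already sit inside $\Gamma_{p,+}$; this reduces to the determinant/parity observation above, or equivalently to the fact that $\Gamma_{p,+}$ acts without inversions on $\mathcal{T}_p$, which is standard in the \v{C}erednik-Drinfel'd framework. Once this is in hand the result is a direct consequence of orbit-stabiliser applied twice.
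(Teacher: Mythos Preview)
Your argument is correct and follows essentially the same route as the paper: both exploit the degree-$2$ cover $\Gamma_{p,+}\backslash\mathcal{T}_p\twoheadrightarrow\Gamma_p\backslash\mathcal{T}_p$, and the paper's one-line proof simply asserts as ``immediate'' the splitting behaviour that you justify carefully via the bipartition of $\mathcal{T}_p$ by parity of $v_p(\det)$ and orbit--stabiliser. Your treatment is in fact more complete than the paper's, which only writes down the genus formula and declares the rest clear.
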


\begin{proof}
The curve $\Gamma_{p,+}\backslash\mathcal{H}_{p}$ is a degree $2$ covering of $\Gamma_{p}\backslash\mathcal{H}_{p}$, so it is immediate to see that 
\[
g_{+}=\#\{e\in\mathrm{Ed}(\Gamma_{p}\backslash\mathcal{T}_{p})\mid\, -e\neq e\}-1+\#\{e\in\mathrm{Ed}(\Gamma_{p}\backslash\mathcal{T}_{p})\mid\, -e=e\}=c_{1}+c_{2}+c_{3}-1.
\]
The rest of the statement is then also clear.
\end{proof}

\begin{remark}
	The formulas in Proposition \ref{genus_formulas} generalise those computed in \cite{Gerritzen_vanderPut1980} relative to the family of Shimura curves $X(2p,1)$, with $p\equiv 1\;\mathrm{mod}\ 4$. 
	We refer the reader to \cite{vanderPut1992}, where the author explains the explicit relations of these computations with the family $X(2p,1)$. 
\end{remark}

\begin{remark}
After Lemma \ref{lemma_D3} we can rewrite $\delta_{p}(3,1)$ as $\delta_{p}(3,1):=w_{2,1}+w_{2,2}+w_{2,3}$, where
\begin{itemize}
\item[-] $w_{2,1}:=\frac{1}{2}\#\{(a_{0},a_{2},a_{3})\in\mathbb{Z}^{3}\mid \, a_{0}^{2}+3(a_{2}^{2}+a_{3}^{2})=p, \ a_{0}+a_{3}\equiv 1\;\mathrm{mod}\ 2,\;a_{2}\equiv 0\;\mathrm{mod}\ 2\},$

\item[-] $w_{2,2}:=\frac{1}{2}\#\{(a_{0},a_{1},a_{2},a_{3})\in\mathbb{Z}^{4}\mid \, a_{0}^{2}+a_{1}^{2}+3(a_{2}^{2}+a_{3}^{2})=p,\ a_{0}+a_{3}\equiv\,1\;\mathrm{mod}\ 2,\,a_{1}+a_{2}\equiv 0\ \mathrm{mod}\ 2,\,a_{1}+3a_{2}=0\},$

\item[-] $w_{2,3}:=\frac{1}{2}\#\{(a_{0},a_{1},a_{2},a_{3})\in\mathbb{Z}^{4}\mid \, a_{0}^{2}+a_{1}^{2}+3(a_{2}^{2}+a_{3}^{2})=p,\ a_{0}+a_{3}\equiv\,1\ \mathrm{mod}\ 2,\,a_{1}+a_{2}\equiv 0\;\mathrm{mod}\ 2,\,a_{1}-3a_{2}=0\}$.
\end{itemize}
The same works for $(D,N)=(2,1)$ (\cite[Ch. IX]{Gerritzen_vanderPut1980}). 
Here $\delta_{p}(2,1)=w_{2,1}+w_{2,2}+w_{2,3}$, with 
\begin{align*}
w_{2,1}:=1/2\cdot\#\{(a_{0},a_{2},a_{3})\in\mathbb{Z}^{3}\mid\, a_{0}^{2}+a_{2}^{2}+a_{3}^{2}=p,
\ a_{0}\equiv\,1;\mathrm{mod}\,2,\;a_{2}\equiv a_{3}\equiv 0\;\mathrm{mod}\,2\}, \\
w_{2,2}:=1/2\cdot\#\{(a_{0},a_{1},a_{3})\in\mathbb{Z}^{3}\mid\, a_{0}^{2}+a_{1}^{2}+a_{3}^{2}=p,\ a_{0}\equiv\,1;\mathrm{mod}\,2,\,a_{1}\equiv a_{3}\equiv 0\;\mathrm{mod}\,2\}, \\
w_{2,3}:=1/2\cdot\#\{(a_{0},a_{1},a_{2})\in\mathbb{Z}^{3}\mid\, a_{0}^{2}+a_{1}^{2}+a_{2}^{2}=p,\ 
a_{0}\equiv\,1;\mathrm{mod}\,2,\,a_{1}\equiv a_{2}\equiv 0\;\mathrm{mod}\,2\}.
\end{align*}
\end{remark}

\begin{remark}
The genus $g_{+}$ is actually the genus of the special fibre of the Drinfel'd model $\mathcal{X}(Dp,N)$ of the considered Shimura curves. Indeed, the reader can check that this coincides with the usual genus formula for the Shimura curve $X(Dp,N)$
(cf. \cite{ShimuraBook1970}).
\end{remark}

\section{Examples: computation of reduction-graphs}\label{Sec5}

Here we describe an algorithm, implemented in Magma, to obtain explicit examples illustrating our method\footnote{The Magma code described can be found on the second author's webpage \texttt{piermarcomilione.wordpress.com}.}. We will consider the family of Shimura curves with $D_B=3p$ and $N=2$. 

First we compute the definite quaternion algebra $H=\left(\frac{-1,-3}{\mathbb{Q}}\right)$, a basis for an Eichler order $\mathcal{O}\subseteq H$ of level $N=2$, and an element $\xi\in\mathcal{O}$ satisfying the right-unit property in $\mathcal{O}$.
\begin{Verbatim}[fontsize=\footnotesize]
> D := 3; N:=2; H, O := Data(D,N); NmH, NmO := Normic_form(H,O);
> xi := choose_xi(O); xi;
-1/2 - 1/2*I - 1/2*J + 1/2*K
\end{Verbatim}
We fix $p=13$, a prime that satisfies the null-trace condition with respect to $\xi$. By Theorem \ref{Theorem_Mumford_curve} we know that $\Gamma_{13}(\xi)$ is a Schottky group and the stable reduction-graph of the associated Mumford curve is a graph obtained via pair-wise identification of the boundary edges of a graph consisting of one vertex and $p+1=14$ edges (see figure \ref{graph1}).
\begin{figure}[h]
\captionsetup{font=footnotesize}
\begin{center}
\includegraphics[scale=0.2]{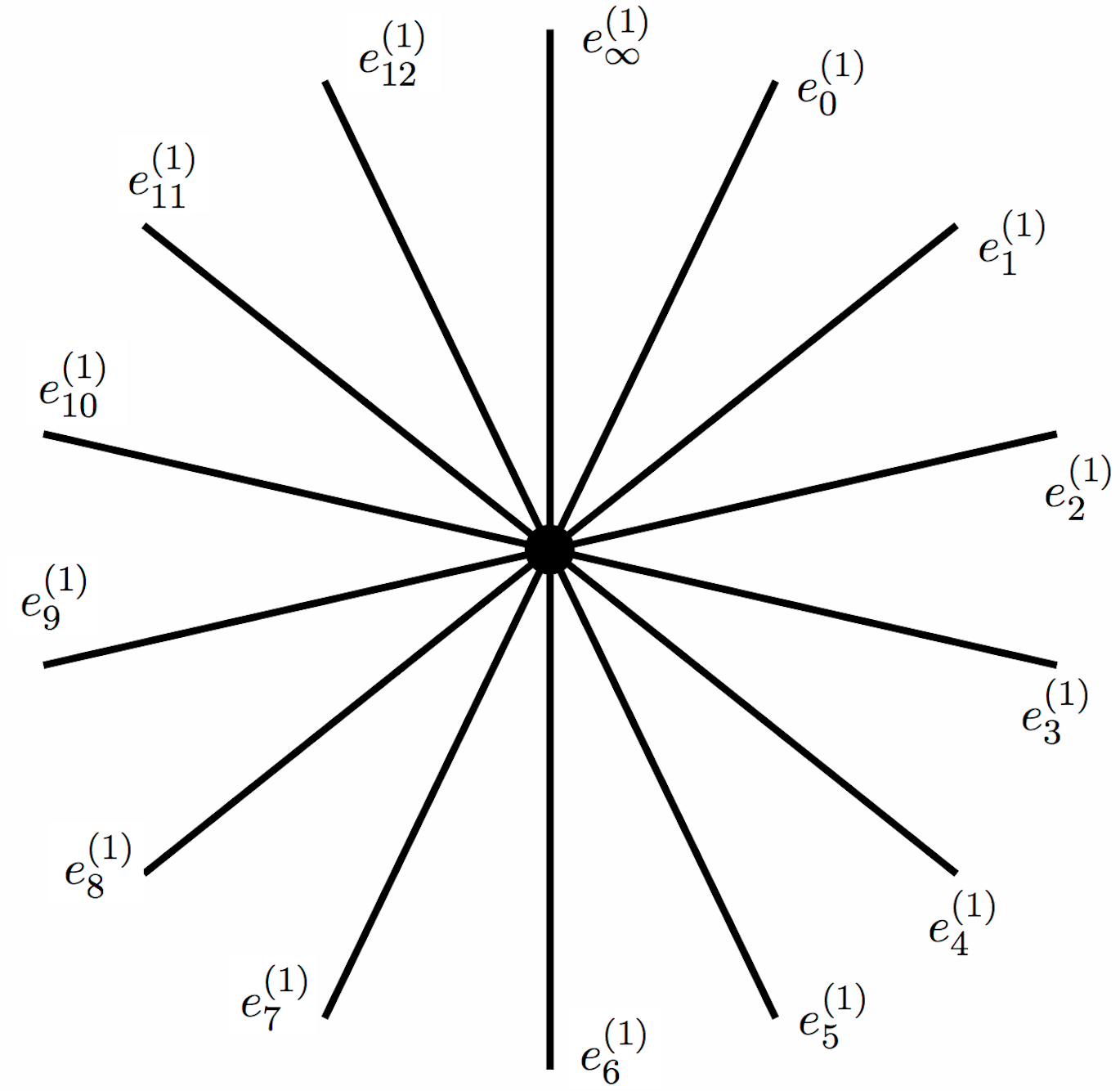}
\caption[graph1]{Reduction of the fundamental domain $\mathcal{F}_{13}(-\frac{1}{2}-\frac{1}{2}i-\frac{1}{2}j+\frac{1}{2}ij)$}\label{graph1}
\vspace{-1.5em}
\end{center}
\end{figure}

In order to compute these identifications we need to compute a system of generators for $\Gamma_p(\xi)$. These are first computed as quaternions in $H$ and then as $p$-adic matrices in $\mathrm{GL}_{2}(\mathbb{Q}_{p})$. Note that there are $(p+1)/2=7$ of them, as predicted by Proposition \ref{rank}. The symbol \verb+i+ appearing in the matrix description below is the $13$-adic number $\sqrt{-1}$.
\begin{Verbatim}[fontsize=\footnotesize]
> p := 13;
> gens_xi, gens_Schottky := generators_Gammas(H,O,p,xi);
> gens_xi;
[ -3 + 1/2*I - 1/2*J + K, -3 - 1/2*I + 1/2*J + K, -3 + I - J,
-1 + 2*K, -1 + 3/2*I - 3/2*J + K, -1 - 3/2*I + 3/2*J + K, -1 - 3*I - J ]
> gens_Schottky;
[   [ 1/2*(i - 6) 1/2*(2*i - 1)]	 [1/2*(-i - 6) 1/2*(2*i + 1)]
    [1/2*(6*i + 3)  1/2*(-i - 6)],       [1/2*(6*i - 3)   1/2*(i - 6)],

    [ 1/2*(3*i - 2)  1/2*(2*i - 3)]     [1/2*(-3*i - 2)  1/2*(2*i + 3)]
    [ 1/2*(6*i + 9) 1/2*(-3*i - 2)],    [ 1/2*(6*i - 9)  1/2*(3*i - 2)],

    [ i - 3     -1]      [-3*i - 1       -1]      [ -1 2*i]
    [     3 -i - 3],     [       3  3*i - 1],     [6*i  -1]  ]
\end{Verbatim}
We can now compute the fundamental domain $\mathcal{F}_{p}(\xi)\subseteq \mathcal{H}_{p}$ for the action of $\Gamma_{p}(\xi)$. The function \verb+fundamental_domain+ gives as output the radii $1/\sqrt{p}$ of the balls constituting the boundary of $\mathcal{F}_{p}(\xi)$, as well as the pair-wise identification of the boundary of these balls (cf. Theorem \ref{Theorem_Mumford_curve}). This is done by reducing modulo $p$ the fixed points of $\gamma,\gamma^{-1}$ for all the generators $\gamma$ previously computed. Each ball is identified by the coordinates of its centre, which is a point in $\mathbb{P}^{1}(\mathbb{Q}_{p})$.
\begin{Verbatim}[fontsize=\footnotesize]
> radius, pairing := fundamental_domain(H,O,p,xi,gens_xi);
> pairing;
[  <( 9  1), (11  1)>,  <( 1  1), ( 2  1)>,  <( 5  1), ( 7  1)>,  <( 3  1), (10  1)>, 
 <( 8  1), ( 1  0)>,  <( 0  1), ( 6  1)>,  <( 4  1), (12  1)>  ]
\end{Verbatim}
In figure \ref{dominio_padico} we represent this $p$-adic fundamental domain, where one has to read the labels of the balls as a notation for the pair-wise identifications given by \verb+pairing+: the interior (resp. exterior) of the ball $X$ is identified with the exterior (resp. interior) of the ball $X^{-1}$. 

\begin{figure}[h]
\captionsetup{font=footnotesize}
\begin{center}
\includegraphics[scale=0.2]{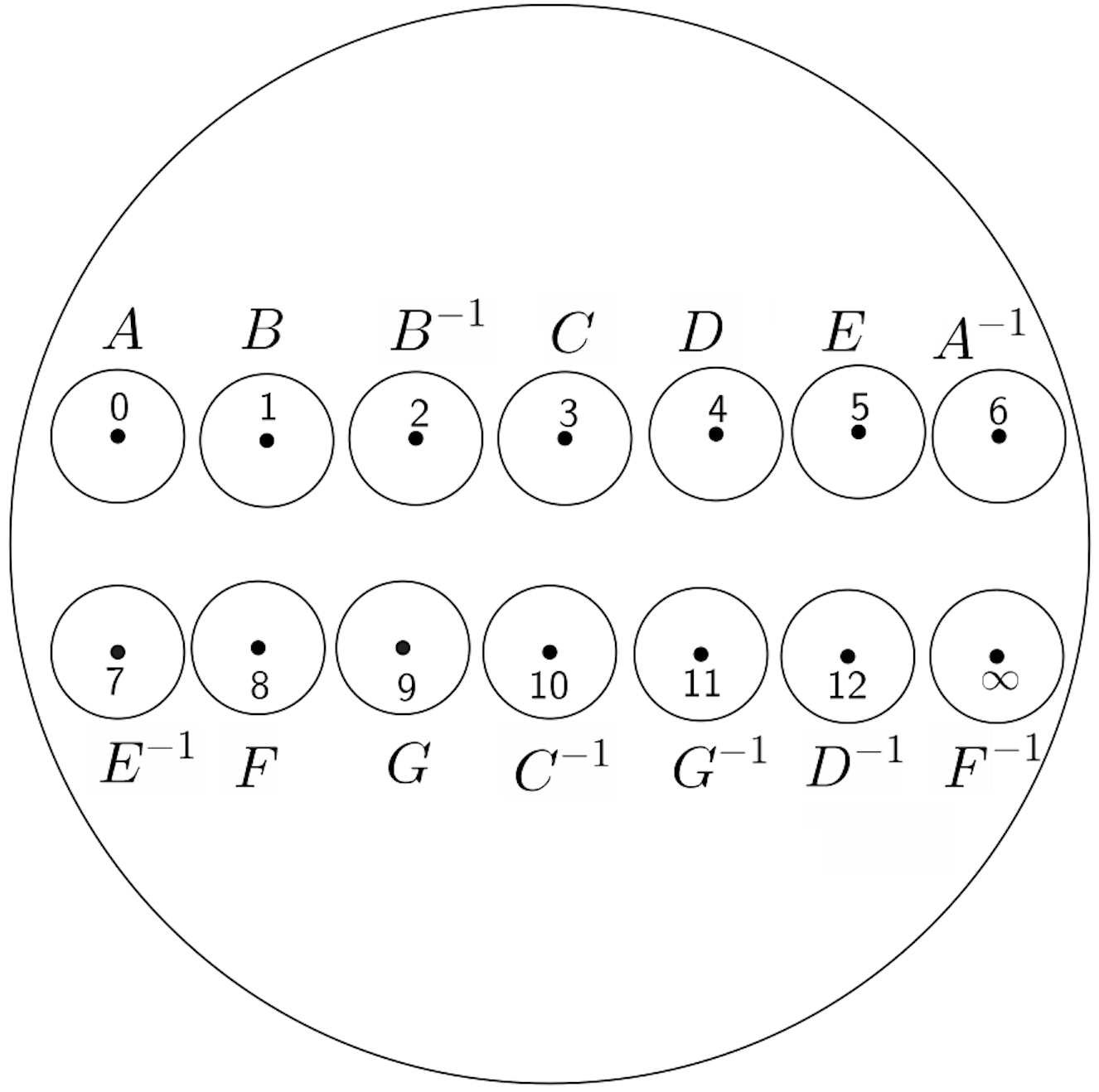}
\vspace{-1.5em}
\end{center}
\caption{Fundamental domain $\mathcal{F}_p(\xi)$ for the action of $\Gamma_p(\xi)$ in $\mathcal{H}_p$}\label{dominio_padico}
\vspace{-0.5em}
\end{figure}

In figure \ref{graph2}, we can see the reduction-graph of the Mumford curve associate to $\Gamma_{p}(\xi)$, which is obtained by reduction of the rigid analytic variety $\Gamma_{p}(\xi)\backslash\mathcal{F}_{p}(\xi)$.

\begin{figure}[h]
\captionsetup{font=footnotesize}
\begin{center}
\includegraphics[scale=0.2]{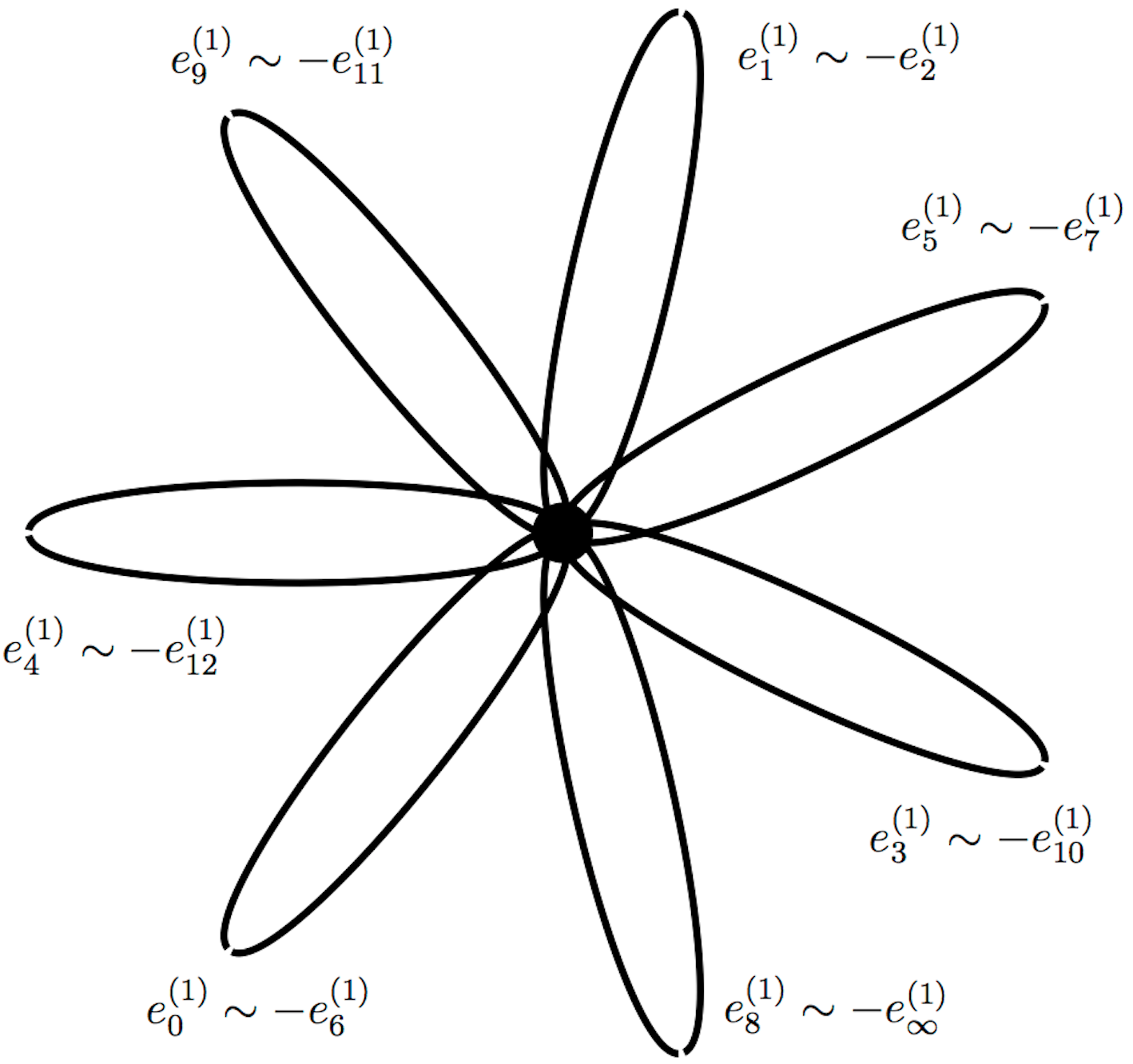}
\caption{Stable reduction-graph of the Mumford curve associated with $\Gamma_{13}(-\frac{1}{2}-\frac{1}{2}i-\frac{1}{2}j+\frac{1}{2}ij)$}\label{graph2}

\end{center}
\vspace{-1.5em}
\end{figure}

Now we compute $\Gamma_{p}\backslash\mathcal{T}_{p}$. 
Although one can easily do this with the formulas of tables \ref{Table_edges_graph} and \ref{Table_genus}, here we use an alternative algorithm that we designed to obtain the desired graph. We do it by letting $\Gamma_{p}(\xi)$ act on $(\mathcal{O}^{\times}/\mathbb{Z}^{\times})\backslash\mathcal{T}_{p}=(\Gamma_{p}/\Gamma_{p}(\xi))\backslash\mathcal{T}_{p}$.
We first compute the quotient of the reduction of $\mathcal{F}_{p}(\xi)$ (see Figure \ref{graph1}) by the action of $\mathcal{O}^{\times}/\mathbb{Z}^{\times}$. This is done using \verb+UnitsAction+. 
\begin{Verbatim}[fontsize=\footnotesize]
classes := UnitsAction(H,O,p); classes;
[  {( 8  1), ( 0  1)},  {( 1  1), (11  1)},  {( 2  1), ( 9  1)}, {(10  1), ( 3  1)}, 
{(12  1), ( 4  1)},  {( 5  1)}, {( 7  1)}, {( 6  1), ( 1  0)}  ]
\end{Verbatim}
For example, we see that $e_{8}^{(1)}$ and $e_{0}^{(1)}$ are identified, and that $e_{5}^{(1)}$ remains alone in its class. 

Finally, with \verb+DescriptionGraph+ we compute $\Gamma_{13}\backslash\mathcal{T}_{13}$, which corresponds to the special fibre at $13$ of the Atkin-Lehner quotient of $X(3\cdot 13,2)$.

\begin{Verbatim}[fontsize=\footnotesize]
aller_retour, loops := DescriptionGraph(H,O,p,xi,gens_xi);
> aller_retour;
[  <{ (10  1),( 3  1) }, 1>,  <{ (12  1), ( 4  1) }, 1>  ]
> loops;
[  <{ ( 8  1), ( 6  1), ( 0  1), ( 1  0) }, 1>,  
<{ ( 2  1), ( 1  1), (11  1), ( 9  1) }, 1>,  <{ ( 7  1), ( 5  1) }, 2>  ]
\end{Verbatim}
We can see that this graph has two \textit{aller-retour} edges (i.e., edges $e$ with $-e=e$) of length $1$ and one \textit{loop} of length $2$ (obtained from the identification of $e_5^{(1)}$ and $-e_{7}^{(1)}$). In figure \ref{graph5} we represent the reduction-graphs $\Gamma_{13}\backslash\mathcal{T}_{13}$ and $\Gamma_{13,+}\backslash\mathcal{T}_{13}$ associated with $X(3p,2)$ at the prime $p=13$ .
\begin{figure}[h!]
\captionsetup{font=footnotesize}
\begin{center}
	\includegraphics[scale=0.15]{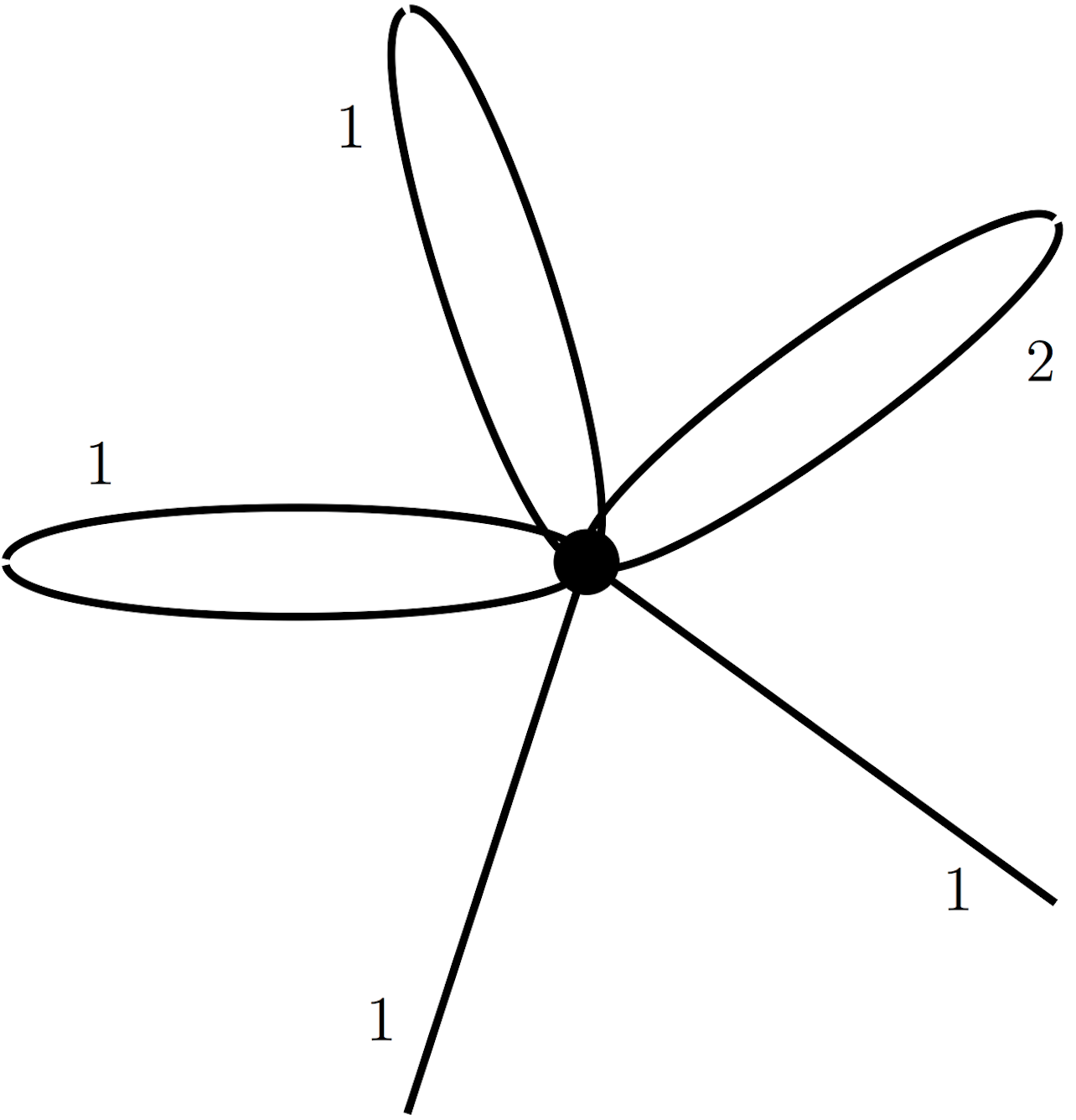}\quad\quad
	\includegraphics[scale=0.2]{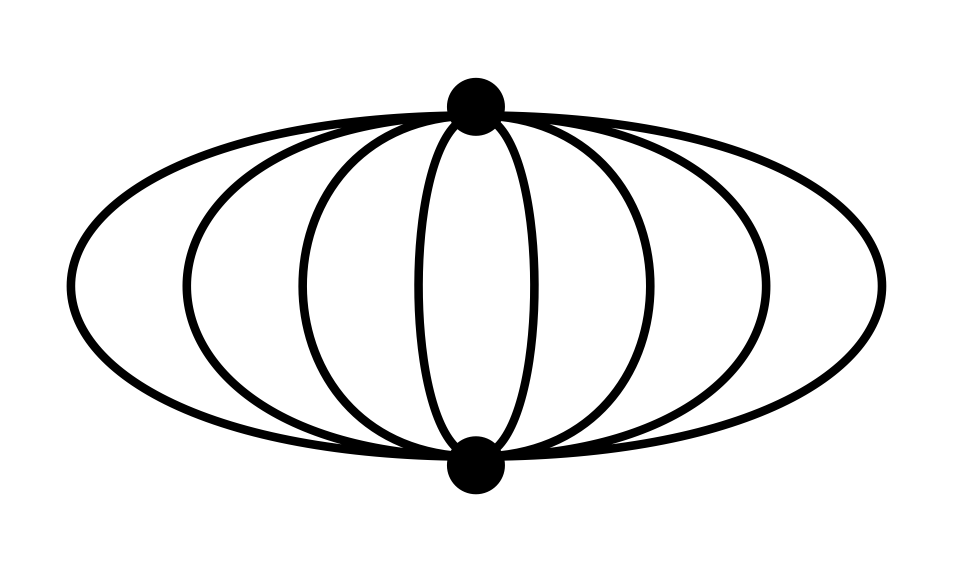}
	\vspace{-1.5em}
\end{center}
\caption{Reduction-graphs with lengths $\Gamma_{13}\backslash\mathcal{T}_{13}$ and $\Gamma_{13,+}\backslash\mathcal{T}_{13}$ for $X(3\cdot 13, 2)$}\label{graph5}
\vspace{-0.5em}
\end{figure}

To conclude, in figure \ref{ex2_2} we show the final reduction-graphs in another example: the reduction-graphs $\Gamma_{p}\backslash\mathcal{T}_{p}$ and $\Gamma_{p,+}\backslash\mathcal{T}_{p}$ when $D=3,\,N=1$ and $p=61$.

\begin{figure}[h]
\captionsetup{font=footnotesize}
\begin{center}
	\includegraphics[scale=0.16]{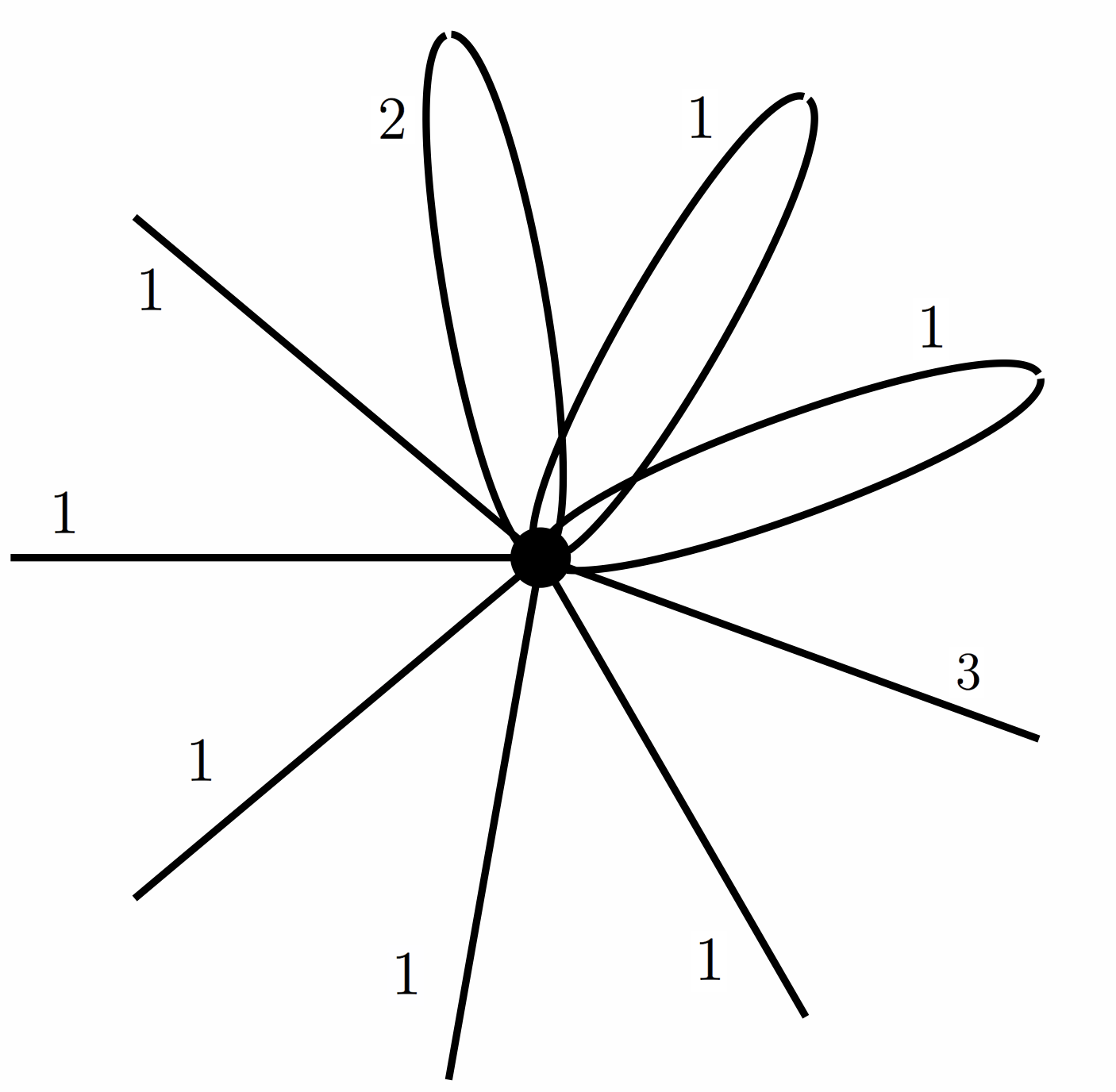}\quad\quad
	\includegraphics[scale=0.18]{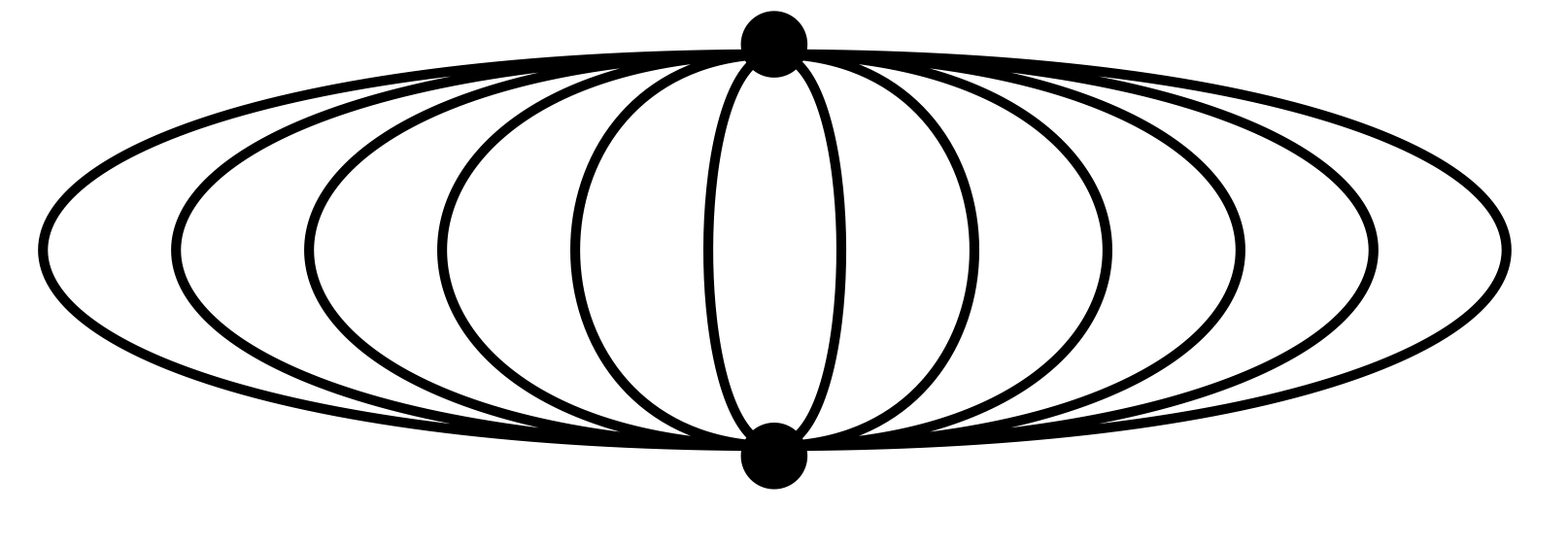}
	\vspace{-1.5em}
\end{center}
\caption{Reduction-graphs with lengths $\Gamma_{61}\backslash\mathcal{T}_{61}$ and $\Gamma_{61,+}\backslash\mathcal{T}_{61}$ for $X(3\cdot 61, 1)$}\label{ex2_2}
\vspace{-1.5em}
\end{figure}

\bibliography{LPM_bibliografia}

\Addresses

\end{document}